\documentclass[11pt,reqno]{amsart}
\usepackage[T1]{fontenc}
\usepackage[utf8]{inputenc}
\usepackage{lmodern}
\usepackage{amsmath,amssymb,mathtools,mathrsfs}
\usepackage{microtype}
\usepackage[margin=1in]{geometry}
\usepackage{enumitem}
\usepackage{booktabs}
\usepackage{hyperref}
\hypersetup{hidelinks,pdfcreator={pdfLaTeX},pdfdisplaydoctitle=true,pdftitle={Finite-index constant-mean-curvature hypersurfaces in space forms of dimension at most seven},pdfsubject={Finite-index CMC hypersurfaces, Green-function identities, and space-form rigidity},pdfkeywords={constant mean curvature, finite Morse index, space forms, hyperbolic space, Green function, volume entropy}}
\numberwithin{equation}{section}
\newtheorem{theorem}{Theorem}[section]
\newtheorem{proposition}[theorem]{Proposition}
\newtheorem{lemma}[theorem]{Lemma}
\newtheorem{corollary}[theorem]{Corollary}
\theoremstyle{definition}

\theoremstyle{remark}
\newtheorem{remark}[theorem]{Remark}
\newcommand{\R}{\mathbb R}

\newcommand{\dd}{\,\mathrm d}
\newcommand{\dV}{\,\mathrm dV_g}
\newcommand{\dAg}{\,\mathrm dA_g}

\newcommand{\dmu}{\,\mathrm d\mu}
\newcommand{\ip}[2]{\langle #1,#2\rangle}
\DeclareMathOperator{\Ric}{Ric}
\DeclareMathOperator{\tr}{tr}
\DeclareMathOperator{\Vol}{Vol}

\DeclareMathOperator{\ind}{ind}
\DeclareMathOperator{\divg}{div}

\DeclareMathOperator{\op}{op}
\newcommand{\doi}[1]{\href{https://doi.org/#1}{doi:\nolinkurl{#1}}}
\newcommand{\arxiv}[1]{\href{https://arxiv.org/abs/#1}{arXiv:\nolinkurl{#1}}}

\allowdisplaybreaks[1]
\author{Zihao Wang}
\address{School of Mathematical Sciences, Fudan University, Shanghai 200433, China}
\email{wangzh25@m.fudan.edu.cn}

\date{September 17, 2026}
\title[Finite index CMC hypersurfaces in space forms]{Finite index constant mean curvature hypersurfaces\\ in space forms of dimension at most seven}
\subjclass[2020]{Primary 53C42; Secondary 35J08, 58J50}
\keywords{Constant mean curvature, finite Morse index, space form, hyperbolic space, Green function, strong stability, stable Bernstein theorem}
\begin{document}
\begin{abstract}
We study complete two-sided constant-mean-curvature hypersurfaces of dimensions $2\le n\le6$ and finite Morse index in simply connected space forms. In the round sphere the immersed domain is compact; in Euclidean space every noncompact example is minimal; in hyperbolic space of curvature $-1$ we obtain compactness under explicit dimension-dependent mean-curvature thresholds, from $H^2>1$ for surfaces to $H^2\ge5/3$ in dimension six. The conclusions also hold for the volume-constrained index, without properness, volume-growth, or curvature-bound assumptions. A common Green-function argument uses explicit rational combinations of eight identities and the stable Bernstein theorem of Hong--Li--Wang. Stable hyperbolic tubes show that no mean-curvature threshold independent of dimension can give compactness in all dimensions.
\end{abstract}
\maketitle

\section{Introduction and main results}\label{sec:introduction}
Let
\[
 X_c^{n+1}=\begin{cases}\mathbb H^{n+1}(-1),&c=-1,\\ \mathbb R^{n+1},&c=0,\\ \mathbb S^{n+1}(1),&c=1.\end{cases}
\]
endowed with its standard metric $\bar g_c$. Fix an integer $2\le n\le6$. Unless another dimension is explicitly specified, an immersion
$F:(M^n,g)\to X_c^{n+1}$ is smooth, $M$ is connected and without boundary, and the induced metric $g=F^*\bar g_c$ is complete. Two-sidedness means that a global unit normal $N$ to the immersion has been chosen. Our sign and mean-curvature conventions are
\begin{equation}\label{eq:conventions}
 A(X,Y)=\langle\bar\nabla_{dF(X)}N,dF(Y)\rangle,
 \qquad H=\tfrac1n\tr_g A,
 \qquad B=A-Hg.
\end{equation}
Thus $H$ is the averaged mean curvature, $B$ is the trace-free second fundamental form, and
\begin{equation}\label{eq:normA}
 a:=|B|,\qquad |A|^2=a^2+nH^2.
\end{equation}
We assume that $H$ is constant. The Laplacian is $\Delta=\divg\nabla$. We use $R(X,Y)Z=\nabla_X\nabla_YZ-\nabla_Y\nabla_XZ-\nabla_{[X,Y]}Z$ and $\sec(X,Y)=\langle R(X,Y)Y,X\rangle$ for orthonormal $X,Y$; hence the unit sphere has curvature $+1$.

For $f\in C_c^\infty(M)$, define the Jacobi quadratic form
\begin{equation}\label{eq:Q}
 Q_c(f)=\int_M\bigl(|\nabla f|^2-(|A|^2+nc)f^2\bigr)\dV.
\end{equation}
The \emph{strong Morse index} $\ind(Q_c)$ is the supremum of the dimensions of subspaces of $C_c^\infty(M)$ on which $Q_c$ is negative definite. Strong stability means $Q_c\ge0$ on this entire test space. In contrast, the \emph{volume-constrained index} $\ind_0(Q_c)$ is the index of the restriction to
\begin{equation}\label{eq:zeroMean}
 \mathcal V_0=\left\{f\in C_c^\infty(M):\int_Mf\dV=0\right\}.
\end{equation}
Weak stability means nonnegativity on $\mathcal V_0$. These are the variational conventions of \cite{BC84,BCE88}; the zero-mean condition is imposed on the immersed domain and does not require embeddedness or a globally enclosed finite volume.

\begin{theorem}[Spherical compactness]\label{thm:sphere}
Let $2\le n\le6$ and let $F:(M^n,g)\to\mathbb S^{n+1}(1)$ be a smooth, connected, complete, two-sided constant-mean-curvature immersion without boundary. If $\ind(Q_1)<\infty$, then $M$ is compact.
\end{theorem}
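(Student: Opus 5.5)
The plan is to argue by contradiction. Suppose $M$ is noncompact. Finite strong index gives a compact set $K\subset M$ such that $M\setminus K$ is strongly stable. This is the standard Fischer-Colbrie argument: if every compact set had an unstable complement, one could build infinitely many disjointly supported test functions with $Q_1<0$. So on $M\setminus K$ we have
\[
 \int_M\bigl(|\nabla f|^2-(|A|^2+n)f^2\bigr)\dV\ge0,\qquad f\in C_c^\infty(M\setminus K).
\]
The goal is to show that this inequality already fails on a noncompact complete end in $\mathbb S^{n+1}$ when $n\le6$. The key feature of the sphere is that the potential satisfies $|A|^2+n\ge n>0$ pointwise. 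The stability inequality therefore gives a uniform positive lower bound for the bottom of the spectrum of $-\Delta$ on the end:
\[
 \int|\nabla f|^2\ge n\int f^2 .
\]
At the same time, the stronger weighted inequality with $|A|^2$ will be used to control the geometry.

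The main step is a Green-function or volume-growth argument on the stable end. Following the common method announced in the abstract, I would combine the Simons-type inequality for $a=|B|$, namely the Kato-improved identity for $\Delta a$ in terms of $a^2$, $H$, $c$, with the stability inequality applied to test functions of the form $a^{q}\varphi$ or $G^{\beta}\varphi$. Here $G$ is a minimal positive Green function on the end, which exists because the end is nonparabolic by the spectral gap. The test function $\varphi$ is a cutoff in the level sets of $G$ (or of the distance). Choosing the exponents as explicit rational combinations, in the way the paper does for the eight identities, should produce an integral inequality in which the curvature term $+n$ (coming from $c=1$) appears with a positive coefficient. One then gets an estimate of the form
\[
 n\int_{\{t<G<2t\}}\!\!\cdots\;\le\; C\cdot(\text{boundary/cutoff error}),
\]
and the error tends to zero as the cutoff expands, using the exponential decay of $G$ that the spectral gap forces. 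The only place $n\le6$ enters is the admissible window of exponents. This is exactly the range in which the Hong--Li--Wang stable Bernstein machinery works, so I expect the same rational choices to be available.

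As a fallback for the integral estimate, I would use a blow-up argument. If $|A|$ were unbounded along the end, rescaling around points where $|A|$ is large would produce, after passing to a limit, a complete strongly stable minimal hypersurface in $\mathbb R^{n+1}$ with a point of unit second fundamental form. The limit is minimal because $H$ and the ambient curvature scale away. The stable Bernstein theorem of Hong--Li--Wang says this limit is flat, which is a contradiction. Hence $|A|$ is bounded on the end.

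With $|A|$ bounded and the end strongly stable, the inequality $\int|\nabla f|^2\ge\int(|A|^2+n)f^2$ combined with the Green-function estimate forces the end to have finite volume. But the spectral gap $\lambda_1\ge n$ forces infinite volume with exponential growth. This contradiction shows that $M$ is compact. I expect the main obstacle to be the middle step: the precise choice of exponents and the control of the boundary terms near $\partial K$ and at infinity, which is where the dimension restriction $n\le6$ is genuinely used.
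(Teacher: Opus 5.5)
Your proposal has a genuine gap at the step you flag as the main obstacle. The central estimate is only hoped for, and the mechanism that produces the contradiction is absent.

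In the paper the contradiction comes from the \emph{unit flux} of the Green function. Take $s=\log(t_0/G)$ on a sublevel set $\{0<G<t_0\}\subset M\setminus K$. The coarea formula and $\int_{\{G=t\}}|\nabla G|\,\mathrm dA_g=1$ give $\int\beta(s)\,|\nabla G|^2G^{-1}\,\mathrm dV_g=\int_0^\infty\beta(s)\,\mathrm ds$. Hence the ambient density $\kappa\,\mathrm d\mu=c\,|\nabla G|^2G^{-1}\,\mathrm dV_g$ pushes forward to $c\,\mathrm ds$, which never decays. A cutoff $\psi(s/R)$ makes this term at least of order $R$, while the cutoff errors are $O(R^{-3})$ because $s_\#\mu\le C_G^2\,\mathrm ds$.

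For this comparison to be usable, all other terms must be absorbed \emph{pointwise}. The paper combines the Bochner identity for $w^2$, the Green-gradient divergence, the divergence-free tensors $\tfrac12a^2g-B^2$ and $HB$, Simons' identity, and stability tested on $a\sqrt G\varphi^2$, $w\sqrt G\varphi^2$ and $H\sqrt G\varphi^2$. Using the refined Codazzi--Kato inequality, spectral bounds and exact minimization in the Hessian variable $Z$, it proves the combination is at least $10^{-4}(1+b^4+h^4)+\kappa\Gamma_n$ (Proposition~\ref{prop:coercive}; surfaces use a separate three-term combination). Saying that suitable exponents ``should'' exist leaves the theorem unproved. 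Schoen--Simon--Yau tests $a^q\varphi$ with distance cutoffs would also need volume-growth control, which is unavailable here because the stable end may have exponential volume growth.

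Your closing argument does not work either. Nothing you describe shows that ``the Green-function estimate forces the end to have finite volume.'' The other half is false as stated: a positive Dirichlet spectral bottom on an end does not force infinite volume. A finite-volume hyperbolic cusp end has Dirichlet spectrum bounded below by $(n-1)^2/4$. The paper instead gets infinite volume from the immersion itself (Lemma~\ref{lem:volume}). It uses this to upgrade the exterior gap to a global $\lambda_0>0$ and to prove $G\to0$ at infinity; exponential decay of $G$ is never used.

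The curvature bound is also not a fallback but an indispensable step. The bound $|\nabla G|/G\le C_G$, which controls every cutoff error, comes from the Cheng--Yau gradient estimate. That estimate needs a Ricci lower bound, which by the Gauss equation needs $\sup|A|<\infty$. This is supplied by the blow-up together with the Hong--Li--Wang theorem (Lemma~\ref{lem:curvature}). Accordingly $n\le6$ enters in two independent places: the stable Bernstein input, and the positivity of the explicit coefficient rows. It does not enter through a single ``admissible window of exponents.''
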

\begin{theorem}[Euclidean minimality]\label{thm:euclidean}
Let $2\le n\le6$ and let $F:(M^n,g)\to\mathbb R^{n+1}$ be a smooth, connected, complete, noncompact, two-sided constant-mean-curvature immersion without boundary. If $\ind(Q_0)<\infty$, then $H=0$.
\end{theorem}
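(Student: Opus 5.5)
We argue by contradiction: assume $F:(M^n,g)\to\mathbb R^{n+1}$ is complete, noncompact, two-sided, with constant $H\neq0$ and $\ind(Q_0)<\infty$. The plan has two parts. First, a standard reduction (Fischer-Colbrie) makes $M$ strongly stable outside a compact set. Second, a Green-function integral argument forces $H=0$ on such an end; this is where $n\le6$ and the stable Bernstein theorem of Hong--Li--Wang enter.

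\emph{Step 1 (stability at infinity).} Since $\ind(Q_0)<\infty$, there is a compact $K\subset M$ such that $Q_0(f)\ge0$ for all $f\in C_c^\infty(M\setminus K)$. By Fischer-Colbrie's argument this gives a positive function $u$ on $M\setminus K$ with
\[
\Delta u+|A|^2u=0 .
\]
Because $M$ is noncompact, $M\setminus K$ has at least one unbounded component, and we work on it.

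\emph{Step 2 (a stable end, and a lower bound on the potential).} We have $|A|^2=a^2+nH^2\ge nH^2>0$. Hence
\[
\int|\nabla f|^2\ \ge\ nH^2\int f^2 \qquad\text{for } f\in C_c^\infty(M\setminus K),
\]
so the bottom of the spectrum of $M\setminus K$ is at least $nH^2>0$. The end is therefore nonparabolic, and a positive minimal Green function $G$ exists there.

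\emph{Step 3 (integral identities and curvature estimate).} We then use the paper's scheme.
\begin{itemize}[leftmargin=*]
\item Compute the Bochner/Simons identity for $|B|$, with the refined Kato inequality. In $\mathbb R^{n+1}$ with $H$ constant this reads $\Delta\tfrac12 a^2 = |\nabla B|^2 - |A|^2a^2 + nH\tr B^3 + nH^2a^2$.
\item Test the stability inequality against $a^{q}G^{\beta}\varphi$ for a cutoff $\varphi$.
\item Use the Green-function identities for $\int|\nabla G|^2G^{\cdot}$ on level sets.
\item Take a suitable rational combination of these identities, as the paper does with its eight identities. For $n\le6$ this should yield an integral curvature bound, namely that $\int a^{2q}$ weighted by $G$ is controlled on the end.
\end{itemize}

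\emph{Step 4 (blow-down and contradiction).} From the Step 3 bound we get $a\to0$ at infinity along the end, or pass to a rescaled limit. Constant $H\neq0$ persists under translation, which is the non-rescaled case. Take a sequence of points $p_k\to\infty$ on the end; translations produce a limit that is a complete strongly stable CMC hypersurface in $\mathbb R^{n+1}$ with the same $H\neq0$. This limit exists by the curvature estimates that follow from Hong--Li--Wang plus a point-picking argument: if $|A|$ blew up, rescaling would give a nonflat stable minimal hypersurface, which is impossible for $n\le6$. So $|A|$ is bounded on the end, and the limit hypersurface is strongly stable with $|A|^2\ge nH^2$. A strongly stable complete CMC hypersurface in $\mathbb R^{n+1}$ with $H\neq0$ contradicts the fact that the Euclidean bottom-of-spectrum bound for hypersurfaces is zero when $n\le6$. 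More directly, the volume growth from the curvature bound is polynomial on the translated limit, and polynomial growth forces $\lambda_1=0$ (Brooks/Cheng), contradicting $\lambda_1\ge nH^2>0$ from Step 2.

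\emph{Main obstacle.} The hardest step is Step 4, specifically showing polynomial (or subexponential) volume growth on the unbounded end. I would first try the monotonicity formula for CMC hypersurfaces, $e^{\,n|H|r}r^{-n}\mathrm{Vol}(B_r)$ nondecreasing, applied at unit scales together with the bounded curvature. This only gives local area bounds, so the global growth would then come from the Green-function identity of Step 3. That identity is where the rational-combination choices and the restriction $n\le6$ are decisive.
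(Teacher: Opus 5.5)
Your proposal has a genuine gap. Step~3 is only a placeholder, and Step~4 either assumes the conclusion or relies on a claim that is false.

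\textbf{Why Step~4 fails.} The translated limit is again a complete, strongly stable CMC hypersurface in $\mathbb R^{n+1}$ with bounded $|A|$ and the same $H\neq0$. That is exactly the kind of object the theorem must exclude, so passing to it gains nothing. Asserting that such a hypersurface cannot exist for $n\le6$ is the strongly stable case of the statement itself, so this step is circular. There is also no general principle that complete Euclidean hypersurfaces have bottom of spectrum zero. The ``more direct'' route needs subexponential intrinsic volume growth, and nothing you have supplies it:
\begin{itemize}
\item A bound on $|A|$ only gives $\Ric\ge-K$, hence at most exponential upper bounds by Bishop--Gromov. Bounded curvature is compatible with exponential intrinsic growth, as the universal cover of a compact immersed CMC surface of genus at least two shows.
\item The CMC monotonicity formula gives \emph{lower}, not upper, volume bounds.
\item Brooks' comparison closes the argument once $\mathfrak h(M)=0$ is assumed; that is Proposition~\ref{thm:alln}. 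Avoiding that hypothesis is the whole point of the theorem.
\item A weighted integral bound on $a^{2q}G$ from Step~3 would give neither pointwise decay of $a$ nor any control of volume growth.
\end{itemize}

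\textbf{The missing idea.} The contradiction should come from the $H^2$ term itself, through the Green function, with no blow-down and no volume-growth information. The unit flux $\int_{\{G=t\}}|\nabla G|\dAg=1$ and coarea give $s_\#(h^2\mu)=H^2\dd s$ for $s=\log(t_0/G)$, a flux that does not decay. For $3\le n\le6$ the paper combines eight cutoff identities into a pointwise bound $P_n\ge10^{-4}(1+b^4+h^4)\ge2\cdot10^{-4}h^2$; surfaces use a shorter combination. The eight identities are:
\begin{itemize}
\item the Bochner identity for $\log G$;
\item the divergence-free stress tensor $\tfrac12a^2g-B^2$;
\item the Simons identity;
\item the divergence of $HB$;
\item the three stability tests $a\sqrt G\varphi^2$, $w\sqrt G\varphi^2$, $H\sqrt G\varphi^2$.
\end{itemize}
The pointwise bound uses the refined Kato inequality, an exact Hessian minimization, and explicit positivity margins. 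With cutoffs $\varphi_R=\psi(s/R)$, the left side is then at least a multiple of $H^2R$, while the cutoff errors are $O(R^{-3})$ because $s_\#\mu\le C_G^2\dd s$.

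That last bound needs $|\nabla G|/G\le C_G$ on a low sublevel set. This comes from Cheng--Yau and a Ricci lower bound, which in turn requires $\sup|A|<\infty$. So the Hong--Li--Wang blow-up must come \emph{before} the Green-function step, to feed the gradient estimate, not after it as a compactness device. You also need $G\to0$ at infinity, obtained from a global $\lambda_0>0$ and infinite volume, so that the Green slabs are compact.

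Minor point: your Simons formula drops a term. For $c=0$ it should read $\tfrac12\Delta a^2=|\nabla B|^2+nH^2a^2-a^4+nH\tr B^3$.
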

\begin{theorem}[Hyperbolic compactness at large mean curvature]\label{thm:hyperbolic}
Let $2\le n\le6$ and let $F:(M^n,g)\to\mathbb H^{n+1}(-1)$ be a smooth, connected, complete, two-sided constant-mean-curvature immersion without boundary. If $\ind(Q_{-1})<\infty$ and
\begin{equation}\label{eq:threshold}
 \begin{cases}
 H^2>1,&n=2,\\
 H^2\ge\tau_n,&3\le n\le6,
 \end{cases}
 \qquad
 \begin{array}{c|cccc}
 n&3&4&5&6\\\hline
 \tau_n&251/250&207/200&29/25&5/3
 \end{array}
\end{equation}
then $M$ is compact.
\end{theorem}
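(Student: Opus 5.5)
The plan is to argue by contradiction: assume $M$ is noncompact, reduce to a strongly stable end, and build a Green-function integral identity that forces a nonpositive quantity to be positive once $H^2$ exceeds the threshold in \eqref{eq:threshold}.

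\emph{Reduction to a stable end.} Since $\ind(Q_{-1})<\infty$, a standard Fischer-Colbrie argument gives a compact set $K\subset M$ with $Q_{-1}\ge0$ on $C_c^\infty(M\setminus K)$. The same holds for the volume-constrained index, because $\ind\le\ind_0+1$. If $M$ is noncompact, $M\setminus K$ has an unbounded component $E$, which is a strongly stable CMC end. It then suffices to show that such an end cannot exist when $H^2\ge\tau_n$ (or $H^2>1$ if $n=2$).

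\emph{Potential theory on $E$.} On $E$ the operator $L=\Delta+|A|^2-n$ satisfies $-L\ge0$. For a point $p$ deep in the end, we take a minimal positive Green function $G$ of $\Delta$ on $E$, possibly after conformally modifying by a positive Jacobi supersolution. For $n=2$ we instead use a logarithmic cutoff or the Bérard--Castillon argument, where $H^2>1$ makes $|A|^2-2\ge 2H^2-2>0$ and the stability inequality directly contradicts infinite area. For $n\ge3$, following the $\mu$-bubble/Green-function method of Chodosh--Li and Hong--Li--Wang, we consider the level sets $\{G=t\}$ and the monotone quantities
\[
\int_{\{G=t\}}|\nabla G|^{\alpha}\,G^{\beta},
\]
together with the Bochner formula for $G$. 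The Gauss equation gives
\[
\Ric_g\ge (n-1)(H^2-1)-\tfrac{n-2}{\sqrt{n(n-1)}}\,|H|\,a-\tfrac{n-1}{n}\,a^2 .
\]
The Simons-type identity for $B$ (note $\nabla H=0$) yields the Kato-improved inequality $|\nabla B|^2\ge\frac{n+2}{n}|\nabla a|^2$. Stability tested with $f=a^{q}\varphi$ and with $f=G^{\gamma}\varphi$ supplies further integral identities. We aim for about eight identities in total: Bochner for $G$, the Simons identity, the stability inequality with several test functions, the coarea and level-set relations, and the divergence of $|\nabla G|^{\alpha-2}\nabla G$.

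\emph{Combining the identities.} We take a rational linear combination of these identities so that every cross term in $a$, $|\nabla a|$, $|\nabla G|$ and $\nabla^2G$ is absorbed by completing squares (Cauchy--Schwarz). What remains is
\[
\int_E \bigl(H^2-\tau_n-\text{(nonnegative)}\bigr)\,\psi\dV\le \text{boundary terms} \to 0,
\]
where $\psi>0$ is built from $G$. For $H^2\ge\tau_n$ this forces equality throughout; the equality case would make $a\equiv0$ and $E$ umbilic with $H^2\ge\tau_n>1$, i.e.\ part of a geodesic sphere, which is compact, a contradiction. If instead the cutoff terms fail to vanish, the end has controlled volume growth. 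In that case we rescale at infinity: a limit is a stable CMC hypersurface with bounded geometry, and the Hong--Li--Wang stable Bernstein theorem (valid for $n\le6$) kills the limit.

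\emph{Main obstacle.} The hardest step is choosing the exponents $\alpha,\beta,\gamma,q$ and the rational weights so that the quadratic form in $(a,|\nabla a|,|\nabla G|,|H|)$ is nonnegative exactly when $H^2\ge\tau_n$. This is a finite-dimensional semidefiniteness problem that must be solved dimension by dimension, and it produces the specific constants $\tau_n$. We would first attempt it with $\alpha=2$ and $q=\frac{n-2}{n}$, and then tune the remaining weights numerically before rationalizing them.
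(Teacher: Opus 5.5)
Your overall architecture (Green function on a stable end, a rational combination of Bochner, Simons and stability identities, Kato, completing squares, Hong--Li--Wang) resembles the paper's. However, the steps that make it work are missing or misplaced, and some would fail as stated.

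\textbf{Where Hong--Li--Wang enters.} It cannot serve as a fallback ``at infinity.'' If you translate along the end without curvature blow-up, the limit is a CMC hypersurface in $\mathbb H^{n+1}$ with the same $H\neq0$. A Bernstein theorem for stable \emph{minimal} hypersurfaces in $\mathbb R^{n+1}$ says nothing about it. A minimal Euclidean limit appears only when you blow up at points where $|A|\to\infty$, because then $H/\lambda_j\to0$ and $c/\lambda_j^2\to0$. The paper uses exactly this point-picking blow-up, together with CMC compactness in varying space forms, to prove $\sup_M|A|<\infty$ \emph{before} any Green-function analysis.

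\textbf{Why the curvature bound is indispensable.} It gives a Ricci lower bound, hence the Cheng--Yau estimate $|\nabla G|/G\le C_G$ on a low sublevel set and a uniform Harnack constant. That Harnack constant, combined with a positive \emph{global} spectral bottom and the small-ball volume lower bound, is what makes the minimal Green function on all of $M$ exist and tend to zero at infinity. The global spectral bottom is obtained from the exterior gap $n(H^2-1)>0$ plus infinite volume. With that decay, $\{0<G<t_0\}\subset M\setminus K$. A Dirichlet Green function of $E$ alone vanishes on $\partial E$, so $|\nabla G|/G$ is unbounded there. Finally, the bound $w\le C_G$ is what controls the cutoff errors, via $s_\#\mu\le C_G^2\,\mathrm ds$. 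Without all this, your ``boundary terms $\to0$'' is unsupported.

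\textbf{The endgame.} Your displayed inequality has the wrong sign: $\int(H^2-\tau_n-X)\psi\le o(1)$ with $X\ge0$ forces nothing. More substantively, the cutoff errors carry weights $b^2$, $U$, $hV$, $h^2$. These come from Simons, from the divergence-free tensors $\tfrac12a^2g-B^2$ and $HB$ (which your list omits and which give the exact cancellation of the $U,b^2,hV$ terms), and from stability tested with $a\sqrt G\varphi^2$ and $H\sqrt G\varphi^2$. Such errors are absorbed by Young's inequality only if the combined density is strictly coercive, $P_n\ge\varepsilon(1+b^4+h^4)$. So an ``equality case'' analysis at $H^2=\tau_n$ is not available.

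The paper instead proves this coercivity with margin $10^{-4}$ at $\delta_n=\tau_n^{-1}$ exactly. The contradiction then comes from the unit Green flux, not from rigidity. The identity $\int_{\{G=t\}}|\nabla G|=1$ pushes $h^2\dmu$ forward to $H^2\,\mathrm ds$ in $s=\log(t_0/G)$. A logarithmic cutoff on $R<s<4R$ then gives a left side at least $\varepsilon_*H^2R$, against a right side of order $R^{-3}$.

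\textbf{The $n=2$ shortcut fails.} The exterior gap $|A|^2-2\ge2(H^2-1)>0$ does not contradict infinite area: the hyperbolic plane has positive bottom of spectrum and infinite area, and the paper uses precisely such a gap to construct $G$. You need either an additional Gauss-curvature or area-growth argument, or the paper's combination $E_1+\tfrac12E_5+4\eta E_8$ with $\eta=1-H^{-2}$, run through the same flux and cutoff mechanism.
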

The spherical statement includes minimal immersions as the case $H=0$; no restriction on $|H|$ is imposed. Compactness concerns the domain $M$, not merely its image in the compact ambient sphere. None of the three theorems assumes properness, embeddedness, a volume-growth bound, or a bound for the second fundamental form. By homothety, the spherical conclusion holds for every round $(n+1)$-sphere of positive radius. In curvature $-a_0^2<0$, the hyperbolic condition is $H^2>a_0^2$ for $n=2$ and $H^2\ge a_0^2\tau_n$ for $3\le n\le6$.

The constants $\tau_n$ are sufficient values furnished by explicit rational coefficients, not asserted optimal values. The averaged convention \eqref{eq:conventions} is important: for $n\ge3$ the condition is $|\tr A|\ge n\sqrt{\tau_n}$ in the trace convention. In dimension two the strict endpoint is sharp because horospheres have $H=1$ and are complete, noncompact, and strongly stable. Corollary~\ref{cor:tube6} gives a different complete noncompact strongly stable example in $\mathbb H^7(-1)$ with $H^2=49/48>1$; thus the surface threshold cannot be used unchanged throughout this dimension range.

\begin{corollary}[Volume-constrained finite index]\label{cor:constrained}
All three theorems remain valid with $\ind_0(Q_c)<\infty$ in place of $\ind(Q_c)<\infty$.
\end{corollary}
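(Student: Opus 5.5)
The plan is to show that the volume-constrained index and the strong index differ by at most one. Once that holds, $\ind_0(Q_c)<\infty$ implies $\ind(Q_c)<\infty$, and Theorems~\ref{thm:sphere}, \ref{thm:euclidean} and \ref{thm:hyperbolic} apply verbatim. Concretely, I would prove
\[
 \ind_0(Q_c)\le\ind(Q_c)\le\ind_0(Q_c)+1 .
\]
The left inequality is trivial, because $\mathcal V_0\subset C_c^\infty(M)$ and every subspace of $\mathcal V_0$ on which $Q_c$ is negative definite is also admissible in the definition of $\ind(Q_c)$. The content is in the right inequality.

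For the right inequality, let $W\subset C_c^\infty(M)$ be a finite-dimensional subspace on which $Q_c$ is negative definite, with $\dim W=k$. Consider the linear functional $\ell(f)=\int_M f\dV$, which is well defined on $C_c^\infty(M)$ since its elements have compact support, with no appeal to finite volume of $M$ or to embeddedness. Its kernel $W\cap\ker\ell=W\cap\mathcal V_0$ has codimension at most one in $W$, so it has dimension at least $k-1$. Negative definiteness passes to subspaces, so $Q_c$ is negative definite on $W\cap\mathcal V_0$, and therefore $\ind_0(Q_c)\ge k-1$. Taking the supremum over $W$ gives $\ind(Q_c)\le\ind_0(Q_c)+1$. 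In particular, if $\ind_0(Q_c)<\infty$ then the dimensions of negative subspaces of $Q_c$ on $C_c^\infty(M)$ are bounded, so $\ind(Q_c)<\infty$.

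With this in hand I would apply the three theorems under the same remaining hypotheses (connected, complete, two-sided, constant $H$, $2\le n\le6$, plus the mean-curvature threshold \eqref{eq:threshold} in the hyperbolic case). They give compactness in $\mathbb S^{n+1}$, minimality for noncompact examples in $\mathbb R^{n+1}$, and compactness in $\mathbb H^{n+1}$. I expect no real obstacle. The only point needing care is that $\ind_0$ is defined using the zero-mean space $\mathcal V_0$ inside compactly supported functions, exactly as in \eqref{eq:zeroMean}, so that the codimension-one argument is purely linear-algebraic. It needs no finiteness of $\Vol(M)$ and no approximation argument. If the finite-index hypothesis were instead phrased through Jacobi-operator eigenfunctions on exhausting domains, one would also check that the domain-monotonicity characterization of the index is compatible with the constraint. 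The definition adopted here avoids that.
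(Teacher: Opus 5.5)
Your proposal is correct and is essentially the paper's proof. The paper proves $\ind_0(Q_c)\le\ind(Q_c)\le\ind_0(Q_c)+1$ in Lemma~\ref{lem:exterior} by the same codimension-one bound $\dim(E\cap\mathcal V_0)\ge\dim E-1$ for negative-definite subspaces $E$, and then applies the appropriate main theorem.
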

\begin{corollary}[Strong stability]\label{cor:strong}
For $2\le n\le6$, every smooth, connected, complete, boundaryless, two-sided strongly stable CMC immersion into $\mathbb R^{n+1}$ has image an affine hyperplane. There is no such strongly stable CMC immersion into $\mathbb S^{n+1}(1)$, or into $\mathbb H^{n+1}(-1)$ under the relevant condition in \eqref{eq:threshold}.
\end{corollary}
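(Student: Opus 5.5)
Corollary~\ref{cor:strong} follows from Theorems~\ref{thm:sphere}--\ref{thm:hyperbolic} once we rule out the compact case and identify the Euclidean case. Strong stability means $\ind(Q_c)=0<\infty$, so all three theorems apply.

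\emph{Compact case.} For $c=1$, and for $c=-1$ under \eqref{eq:threshold}, the relevant theorem makes $M$ compact. We then test $Q_c$ with the constant function $f\equiv1$, which lies in $C_c^\infty(M)$ because $M$ is compact. This gives
\[
Q_c(1)=-\int_M(|A|^2+nc)\dV .
\]
\begin{itemize}[leftmargin=*]
\item For $c=1$ the integrand satisfies $|A|^2+n\ge n>0$, so $Q_1(1)<0$. This contradicts strong stability.
\item For $c=-1$ we have $|A|^2-n\ge nH^2-n$. Under \eqref{eq:threshold} the inequality $H^2>1$ holds in every dimension, since each $\tau_n>1$. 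Hence $Q_{-1}(1)<0$, again a contradiction.
\end{itemize}
So no such immersions exist in $\mathbb S^{n+1}(1)$ or, under the threshold, in $\mathbb H^{n+1}(-1)$.

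\emph{Euclidean case.} For $c=0$, first suppose $M$ is compact. Testing with $f\equiv1$ gives $Q_0(1)=-\int_M|A|^2\dV\le0$, so stability forces $A\equiv0$. A compact totally geodesic hypersurface in $\mathbb R^{n+1}$ is impossible, since $M$ would be an open-and-closed subset of a hyperplane. Hence $M$ is noncompact, and Theorem~\ref{thm:euclidean} gives $H=0$. Now $F$ is a complete, two-sided, strongly stable minimal immersion in $\mathbb R^{n+1}$ with $n\le6$. The stable Bernstein theorem of Hong--Li--Wang, cited in the abstract and assumed available, states that such an immersion is totally geodesic. Therefore the image is an affine hyperplane.

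The only step needing care is the invocation of the stable Bernstein theorem. One must check that it applies to immersions, not just embeddings, and to every dimension $2\le n\le6$. The lower dimensions are covered by the classical results of do~Carmo--Peng, Fischer-Colbrie--Schoen, Pogorelov, Chodosh--Li and others, which the Hong--Li--Wang statement subsumes. Everything else is the elementary constant-test-function computation above.
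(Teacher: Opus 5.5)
Your proposal is correct and follows the paper's proof. Index zero together with Theorems~\ref{thm:sphere} and~\ref{thm:hyperbolic} gives compactness, and the constant test function then contradicts strong stability, using $\tau_n>1$ in the hyperbolic case. In Euclidean space, $Q_0(1)$ rules out a compact domain, after which Theorem~\ref{thm:euclidean} and Theorem~\ref{thm:HLW} finish the argument.

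The only variation is how you exclude a compact totally geodesic immersion. You use an open--closed argument for the image, whereas the paper integrates $\Delta|F|^2=2n$; both work. Note that it is $F(M)$, not $M$, that is open and closed in the hyperplane.
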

\begin{corollary}[Weak stability in the sphere]\label{cor:weak}
For $2\le n\le6$, every smooth, connected, complete, boundaryless, two-sided weakly stable CMC immersion into $\mathbb S^{n+1}(1)$ is a geodesic $n$-sphere, including the totally geodesic equator.
\end{corollary}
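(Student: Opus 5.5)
The plan is to deduce Corollary~\ref{cor:weak} from Corollary~\ref{cor:constrained} and then to invoke the classical rigidity theorem for weakly stable CMC hypersurfaces in the sphere, due to Barbosa--do Carmo--Eschenburg \cite{BCE88}, which applies to compact immersions.

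\textbf{Step 1: finite index.} Weak stability means $Q_1\ge0$ on $\mathcal V_0$, so $\ind_0(Q_1)=0<\infty$. Corollary~\ref{cor:constrained}, in its spherical version (Theorem~\ref{thm:sphere} with $\ind_0$ in place of $\ind$), then shows that $M$ is compact. I am not assuming that $\ind(Q_1)\le 1$ holds automatically: the passage from weak to strong stability is exactly what Corollary~\ref{cor:constrained} supplies, so I only use it as stated.

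\textbf{Step 2: rigidity on the compact domain.} On a compact $M$ the constant function $1$ is admissible, and the Minkowski-type test functions are available. Let $F$ be the position vector in $\R^{n+2}$ and $N$ the unit normal. For a fixed vector $v\in\R^{n+2}$, set
\[
 \ell_v=\ip{F}{v},\qquad f_v=\ip{N}{v}.
\]
Using $\Delta F=-nF+nHN$ and the companion formula for $\Delta N$, the Barbosa--do Carmo--Eschenburg argument proceeds as follows.
\begin{enumerate}[label=(\roman*)]
\item It takes the combination $g_v=H\ell_v+f_v$. The Minkowski formula $\int_M(H\ell_v+f_v)\dV=0$ shows that $g_v$ has zero mean.
\item It sums $Q_1(g_v)$ over an orthonormal basis of $v$'s.
\item It obtains $\sum_v Q_1(g_v)=-n\int_M a^2\dV$, up to a positive factor.
\end{enumerate}
Weak stability then forces $B\equiv0$. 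Hence $M$ is totally umbilic, and a compact connected totally umbilic hypersurface of $\mathbb S^{n+1}(1)$ is a geodesic sphere; the case $H=0$ is the equator. Test functions that are not smooth are handled by density, since all the functions involved are smooth on the compact $M$.

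\textbf{Step 3: the converse.} Geodesic spheres are weakly stable, as is classical. This direction is not needed for the statement beyond the remark about inclusion, but it confirms that the classification is exact.

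The main obstacle is Step 1, namely the fact that finite volume-constrained index already forces compactness in this dimension range; that is the content of Corollary~\ref{cor:constrained}. Granting it, Step 2 is the standard compact argument. The only point to check in Step 2 is the sign bookkeeping under the conventions \eqref{eq:conventions}, where $H$ is averaged and $A$ is defined via $\bar\nabla N$. This affects only the sign in $g_v=H\ell_v\pm f_v$, which is chosen so that the Minkowski integral vanishes.
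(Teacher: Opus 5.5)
Your overall route is the same as the paper's. Weak stability gives $\ind_0(Q_1)=0$. By \eqref{eq:indexcomparison} this gives $\ind(Q_1)\le1$, which is in fact automatic, and that comparison is all Corollary~\ref{cor:constrained} uses. Theorem~\ref{thm:sphere} then gives compactness, and \cite[Theorem~1.2]{BCE88} finishes.

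As long as Step~2 is a citation of BCE88, the proof is complete. Your sketch of their argument, however, would fail at item (i), and the problem is not a sign. The function $g_v=H\ell_v+f_v$ does not have zero mean. With the paper's conventions, $\Delta F=-nF-nHN$ and $\Delta f_v=-|A|^2f_v-nH\ell_v$. Integrating $\Delta\ell_v$ gives $\int_M(\ell_v+Hf_v)\dV=0$, whereas $\int_M(H\ell_v+f_v)\dV=(1-H^2)\int_Mf_v\dV$. Concretely, take a geodesic sphere of radius $r$ about $e$ and $v=e$. Then your $g_v$ is the nonzero constant $\cos 2r/\sin r$ whenever $H^2\ne1$, so weak stability cannot be tested on it. Your own formula $\Delta F=-nF+nHN$ gives $\ell_v-Hf_v$; in every convention $H$ multiplies $f_v$, not $\ell_v$.

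The correct choice is $g_v=\ell_v+Hf_v$. It satisfies $\Delta g_v+(|A|^2+n)g_v=a^2\ell_v$, so $Q_1(g_v)=-\int_Ma^2\ell_v(\ell_v+Hf_v)\dV$. Summing over an orthonormal basis, with $\sum_v\ell_v^2=|F|^2=1$ and $\sum_v\ell_vf_v=\langle F,N\rangle=0$, gives $\sum_vQ_1(g_v)=-\int_Ma^2\dV$. Hence $B\equiv0$.

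One smaller point concerns the last step. Total umbilicity only places the image in a geodesic sphere. As the paper notes, you should add that the immersion is then a local isometry from a compact connected manifold onto that geodesic $n$-sphere, hence a covering. The covering is trivial because $\mathbb S^n$ is simply connected for $n\ge2$.
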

The last statement combines Theorem~\ref{thm:sphere} with the compact classification of Barbosa--do Carmo--Eschenburg \cite[Theorem~1.2]{BCE88}. Finite index alone does not imply that a spherical CMC immersion is minimal or totally umbilical.

\subsection{A common analytic statement}
The bounded-curvature part of the three theorems can be expressed in one analytic statement. Stability on an open set always means strong stability for test functions supported in that set.
\begin{theorem}[Exterior stability with bounded curvature]\label{thm:bounded}
Let $2\le n\le6$ and $c\in\{-1,0,1\}$, and let $F:(M^n,g)\to X_c^{n+1}$ satisfy the standing assumptions. Suppose that $M$ is noncompact,
\[
 \sup_M|A|<\infty,\qquad
 Q_c(f)\ge0\quad\text{for every }f\in C_c^\infty(M\setminus K)
\]
for some compact set $K\subset M$. Then
\[
 \begin{cases}
 c+H^2=0,&c\in\{0,1\},\\
 H^2\le1,&c=-1,\ n=2,\\
 H^2<\tau_n,&c=-1,\ 3\le n\le6.
 \end{cases}
\]
\end{theorem}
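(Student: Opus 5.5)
We argue by contradiction. Assume $M$ is noncompact, $\sup_M|A|<\infty$, $Q_c\ge0$ on $C_c^\infty(M\setminus K)$, and $c+H^2>0$; in the hyperbolic case assume instead $H^2>1$ for $n=2$, or $H^2\ge\tau_n$ for $3\le n\le6$. In every case the quantity $\kappa:=c+H^2$ is positive, and this drives the argument.

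\textbf{Step 1 (the Green function).} Since $M$ is noncompact and strongly stable outside $K$, we first pass to a conformally deformed or weighted setting. Following the Chodosh--Li and Hong--Li--Wang strategy, the Jacobi operator $L=-\Delta-(|A|^2+nc)$ is nonnegative on the ends. Take an end $E\subset M\setminus K$ and build a positive solution $u$ of $Lu\le0$ on $E$, or of an $L$-type equation with a small parameter. Combined with the bounded curvature, this makes $M$ nonparabolic in a weighted sense, so a minimal positive Green function $G$ of a suitable operator $-\Delta+\lambda\,\mathrm{Sc}$-type exists on $E$. We then work with the level sets $\{G=t\}$, or equivalently with $b=G^{-1/(n-2)}$-type functions, which requires the usual separate treatment when $n=2$.

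\textbf{Step 2 (the eight integral identities).} Along the level sets of $G$, we collect eight identities:
\begin{itemize}
\item the Bochner formula for $|\nabla G|$ with the refined Kato inequality;
\item the Gauss equation expressing $\Ric$ through $nH A-A^2+(n-1)c\,g$, with the bound $\Ric\ge(n-1)(c+H^2)-\frac{n-1}{\sqrt{n(n-1)}}\,|H|\,a-\frac{n-1}{n}\,a^2$ in the trace-free variables;
\item the stability inequality tested against $\phi\,G^{\beta}\,|\nabla G|^{\gamma}$;
\item Simons-type and integration-by-parts identities for $a$.
\end{itemize}
Taking explicit rational combinations of these, with coefficients chosen per dimension, we aim for an inequality of the form
\[
\int \phi^2\,|\nabla G|^{p}G^{q}\bigl(\kappa-\epsilon_n\bigr)\le C\int |\nabla\phi|^2(\cdots).
\]
Here the positive quadratic form in $(a,|\nabla|\nabla G||,\dots)$ absorbs every $a$-dependent term, and $\epsilon_n=0$ for $c\ge0$. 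In the hyperbolic case, $\kappa-\epsilon_n$ is positive exactly when $H^2>1$ for $n=2$, or $H^2\ge\tau_n$ otherwise; this is how the thresholds $\tau_n$ arise.

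\textbf{Step 3 (cutoffs and contradiction).} We choose cutoffs $\phi$ adapted to the level sets of $G$, using the bounded geometry from $\sup|A|<\infty$ together with gradient estimates on $G$. The right-hand side then stays bounded while the left-hand side grows like a volume of sublevel regions. This gives a contradiction, or else forces $\kappa\le\epsilon_n$.

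For $c=0$ there is a simpler alternative. Any positive $H$ yields a volume-growth contradiction via the stable Bernstein theorem of Hong--Li--Wang applied to blow-down limits: the limits are stable minimal, hence flat, which is incompatible with $H\neq0$ at a fixed scale. For $c=1$, $\kappa\ge1>0$ together with the Bonnet--Myers-type spectral estimate forces $E$ to be bounded.

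\textbf{Main obstacle.} The main difficulty is Step 2. One must find rational coefficients such that the resulting quadratic form in $a$ and the gradient terms is positive semidefinite for $2\le n\le6$ while still leaving a positive multiple of $\kappa$ (minus the hyperbolic correction). I would first try the Hong--Li--Wang weights with one free exponent and check positivity numerically dimension by dimension.
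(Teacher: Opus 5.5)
Your outline has the right overall shape: a Green function on the stable exterior, integral identities combined with dimension-dependent coefficients, and cutoffs. But the step that actually proves the theorem is left as an ``aim,'' and Step~1 points in a direction that would break the contradiction mechanism.

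\textbf{The Green function and the flux.} The paper's argument is driven by a \emph{conserved} quantity. Under $c+H^2>0$, exterior stability gives the Dirichlet bound $n(c+H^2)$ outside $K$. Since $M$ has infinite volume (from the small-ball bound $\Vol_g B_r(x)\ge\omega_nr^ne^{-n\Theta r}$, obtained from the extrinsic squared distance), this upgrades to a global spectral bottom $\lambda_0>0$. Hence the minimal Green function of the \emph{ordinary} Laplacian, $-\Delta G=\delta_p$, exists on all of $M$. The truncation estimate $\lambda_0\int\min\{G,t\}^2\dV\le t$, together with Cheng--Yau (this is where $\sup|A|<\infty$ enters), gives $G\to0$ at infinity and $|\nabla G|/G\le C_G$ on $\{G<t_0\}\subset M\setminus K$. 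In addition, $\int_{\{G=t\}}|\nabla G|\dAg=1$. With $s=\log(t_0/G)$, coarea turns this into $\int\beta(s)H^2|\nabla G|^2G^{-1}\dV=H^2\int\beta(s)\dd s$. Logarithmic cutoffs $\psi(s/R)$ then give a left side $\gtrsim R$ against cutoff errors $O(R^{-3})$.

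If you replace $\Delta$ by a Schr\"odinger-type operator, as you propose, the level-set flux is no longer constant in $t$ and this lower bound disappears. So ``grows like a volume of sublevel regions'' is not the mechanism. Likewise the weight $|\nabla G|^pG^q$ cannot be left free: it must be the homogeneity $\mathrm d\mu=|\nabla G|^4G^{-3}\dV$, whose $H^2$-part is exactly the flux density.

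\textbf{The missing algebraic core.} A scalar Ricci lower bound discards the structure the paper relies on. Yours is also wrong for $n\ge4$: the $|H|a$ coefficient should be $(n-2)\sqrt{(n-1)/n}$. In Bochner's formula for $\log G$, the tensor $\Ric=(n-1)(c+H^2)g+(n-2)HB-B^2$ produces $-|S\nu|^2$ and $(n-2)h\langle S\nu,\nu\rangle$. The paper cancels these \emph{exactly}, not by Young's inequality, as follows:
\begin{itemize}
\item it adds the divergence identities for the divergence-free tensors $\tfrac12|B|^2g-B^2$ and $HB$, each contracted with $\nabla^2G$;
\item it then minimizes over the normalized trace-free Hessian;
\item it adds the Simons identity;
\item it adds stability tested with $|B|\sqrt G\varphi^2$, whose derivative terms are matched to the Codazzi--Kato constant $(n+2)/n$;
\item it adds stability tested with $|\nabla G|G^{-1/2}\varphi^2$ and with $H\sqrt G\varphi^2$.
\end{itemize}
Explicit rational coefficients then give, for $3\le n\le6$, the pointwise bound $P_n\ge10^{-4}(1+b^4+h^4)$, plus a nonnegative ambient term when $c\ge0$. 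The term $1+b^4+h^4$ is indispensable. The cutoff errors contain $(1+b^2+h^2)\varphi^3|D\varphi|$, and these can only be absorbed against it by Young's inequality. A left side carrying only $(c+H^2-\epsilon_n)$ cannot do this.

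\textbf{The thresholds.} The $\tau_n$ are values at which these particular quartic polynomials stay positive with $\delta_n=\tau_n^{-1}$; they are not claimed optimal. Saying your $\kappa-\epsilon_n$ turns positive ``exactly'' at $\tau_n$ restates the conclusion rather than deriving it.

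\textbf{The shortcuts.} For $c=0$, blow-down rescaling sends $H$ to $H/\lambda\to\infty$, so Hong--Li--Wang does not apply to blow-down limits. Only blow-ups are minimal, and they yield just the curvature bound you already assume. Nor is there a volume-growth hypothesis to contradict. For $c=1$, the spectral Bonnet--Myers route is asserted without deriving the needed spectral Ricci bound from exterior CMC stability, on a region with boundary, in a dimension-restricted admissible parameter range for all $n\le6$.
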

The proof separates a geometric reduction from an analytic estimate. Finite index gives stability outside a compact set. A curvature blow-up, together with the stable Bernstein theorem of Hong--Li--Wang, gives a global bound for the second fundamental form. These facts are established in Section~\ref{sec:geometry} before the analytic argument begins. Theorem~\ref{thm:bounded} itself uses only the space-form identities of Section~\ref{subsec:spaceform} and the analysis in Sections~\ref{sec:green}--\ref{sec:cutoff}; it does not use stable minimal rigidity.

Under the contradiction assumption, $c+H^2>0$. Exterior stability and infinite intrinsic volume imply a positive global spectral bottom. A unit-pole positive Green function $G$ then tends to zero at infinity and has bounded logarithmic gradient on a sufficiently low sublevel set. Put
\[
 w=\frac{|\nabla G|}{G},\qquad h=\frac H w,\qquad
 \kappa=\frac c{w^2},\qquad
 \mathrm d\mu=w^4G\,\mathrm dV_g,\qquad
 s=\log\frac{t_0}{G}.
\]
The normalized quantities are defined on the regular Green set and interpreted in their unnormalized form at critical points. The unit Green flux gives
\begin{equation}\label{eq:introflux}
 s_\#(h^2\mu)=H^2\,\mathrm ds,\qquad
 s_\#(\kappa\mu)=c\,\mathrm ds,\qquad
 s_\#\mu\le C_G^2\,\mathrm ds.
\end{equation}
For $3\le n\le6$, one coefficient choice controls all three space forms. Its algebraic lower bound is proved at the least favorable hyperbolic parameter and then transferred to $c\ge0$ by an exact identity retaining the nonnegative ambient-curvature terms. Surfaces admit a shorter three-term combination. The resulting cutoff inequalities contradict the nondecaying $H^2$ or $c$ flux along long logarithmic intervals. In the hyperbolic case the negative ambient terms are absorbed before integration; its signed $c\,\mathrm ds$ flux is not used as a positive measure.

\subsection{Relation to earlier work}
The finite-index CMC problem is closely related to do Carmo's question. The surface theory includes da Silveira~\cite{Sil87}; higher-dimensional curvature and radius estimates appear in Cheng~\cite{Cheng06} and Elbert--Nelli--Rosenberg~\cite{ENR07}. Chen--Hong--Li~\cite{CHL25} prove minimality for complete noncompact finite-index CMC hypersurfaces in $\mathbb R^6$, without a volume-growth assumption. Miranda~\cite[Theorems~A and~D]{Mir26} treats ambient dimension six in nonnegative-curvature products and in hyperbolic space. Together with the minimal-hypersurface results of Catino--Mastrolia--Roncoroni~\cite{CMR24}, his work gives finite-index CMC compactness in $\mathbb S^6$. Thus the Euclidean and spherical conclusions in intrinsic dimensions below six overlap the earlier theory. The present argument treats these conclusions uniformly and includes the upper intrinsic dimension six. The compact weakly stable spherical classification used in Corollary~\ref{cor:weak} is the theorem of Barbosa--do Carmo--Eschenburg~\cite{BCE88}.

For hyperbolic hypersurfaces, Deng~\cite{Deng11} proves compactness under $H^2>64/63$ when $n=3$ and under $H^2>175/148$ when $n=4$. These normalized thresholds are also recorded in \cite[Appendix~A]{Mir26}. Miranda's condition $|\tr A|>7$ in $\mathbb H^6$ is $H^2>49/25$ when $n=5$. The corresponding sufficient constants in \eqref{eq:threshold} are smaller:
\[
 \frac{251}{250}<\frac{64}{63},\qquad
 \frac{207}{200}<\frac{175}{148},\qquad
 \frac{29}{25}<\frac{49}{25}.
\]
Hong~\cite{Hong25} obtains the condition $|H|>1$ in $\mathbb H^4$ with the additional hypothesis of finite topology. No finite-topology assumption is imposed here. The constants in \eqref{eq:threshold} arise from the explicit coefficient rows of Section~\ref{sec:algebra}, and are not asserted to be optimal for $n\ge3$.

The geometric reduction uses the stable Bernstein theorem of Hong--Li--Wang~\cite{HLW26}, continuing the work of do Carmo--Peng~\cite{DP79}, Fischer-Colbrie--Schoen~\cite{FCS80}, Chodosh--Li~\cite{CL24}, Chodosh--Li--Minter--Stryker~\cite{CLMS26}, and Mazet~\cite{Maz24}. The required variable-ambient CMC compactness is proved in Section~\ref{subsec:compactness}; its fixed-ambient counterpart is \cite[Proposition~2.9]{Mir26}. The analytic argument is related to the Green-function methods of \cite{CCMMR26,HLW26}. Here positive ambient curvature or nonzero mean curvature supplies a nondecaying flux, so a compactly supported estimate at Green exponent two suffices. In contrast, dimension-independent results based on volume growth, such as \cite{AdC93,INS16}, retain additional growth hypotheses. Section~\ref{subsec:entropy} records the classical zero-entropy consequence for comparison. All coefficient identities and positivity estimates needed for the main proof are stated explicitly in the paper.

\subsection{Organization}
Section~\ref{sec:geometry} fixes the conventions and proves the finite-index curvature reduction. Section~\ref{sec:green} constructs the Green function on a stable exterior. Sections~\ref{sec:identities} and~\ref{sec:algebra} derive the eight compactly supported identities and a common coercive combination. Section~\ref{sec:cutoff} applies logarithmic cutoffs and completes the main theorems and their corollaries. Section~\ref{sec:examples} gives stable hyperbolic tubes and explains the dimensional scope of the method. Appendix~\ref{app:coefficients} contains the exact coefficient arithmetic; Appendix~\ref{app:entropy} supplies a direct proof of the recalled zero-entropy result.

\section{Geometric preliminaries and curvature reduction}\label{sec:geometry}
\subsection{Conventions and space-form identities}\label{subsec:spaceform}
All scalar stability tests are real-valued. All distances and balls are intrinsic. We write $d_g$ for distance, $B_r(x)$ for an open ball, and $\mathrm dV_g$ for the $n$-dimensional volume measure. The measure $\mathrm dA_g$ on a regular level hypersurface is the induced $(n-1)$-dimensional area measure. All measures are taken on the immersed domain, including multiplicity when different sheets have the same image.

The connection $\nabla$ is the Levi--Civita connection of $g$, and $\nabla^2u$ is the covariant Hessian. For symmetric covariant two-tensors $T$ and $S$, we use the metric to identify them with self-adjoint endomorphisms. In a local orthonormal frame $e_1,\ldots,e_n$, our conventions are
\[
 \langle T,S\rangle=\sum_{i,j=1}^nT_{ij}S_{ij},\qquad
 |T|^2=\langle T,T\rangle,\qquad
 (T^2)_{ij}=\sum_{k=1}^nT_{ik}T_{kj}.
\]
Thus $T^2$ is an endomorphism square, not an entrywise square, and $\tr(T^k)$ is the trace of the endomorphism power. We set
\[
 (\divg T)_j=\sum_{i=1}^n(\nabla_{e_i}T)(e_i,e_j),\qquad
 |\nabla T|^2=\sum_{i,j,k=1}^n
       |(\nabla_{e_i}T)(e_j,e_k)|^2.
\]
When endomorphisms are used, $g$ denotes the identity and $v\otimes v$ denotes the rank-one map $X\mapsto\langle v,X\rangle v$; in covariant notation it is $v^\flat\otimes v^\flat$. A scalar cutoff appearing under an integral is always composed with its stated scalar parameter.

The sign convention is \eqref{eq:conventions}, and $a=|B|$. Since $H$ is constant and the ambient curvature is constant, $B$ is a trace-free Codazzi tensor and $\nabla B$ is a fully symmetric trace-free covariant three-tensor. Consequently
\begin{equation}\label{eq:divergences}
 \divg B=0,\qquad
 \divg\left(\tfrac12a^2g-B^2\right)=0,\qquad
 \divg(HB)=0.
\end{equation}
For the second identity, in a geodesic orthonormal frame, Codazzi gives
\[
 \nabla_i(B_{ik}B_{kj})=B_{ik}\nabla_iB_{kj}
 =B_{ik}\nabla_jB_{ki}=\tfrac12\nabla_ja^2.
\]
The Gauss equation is
\begin{equation}\label{eq:Ric}
 \Ric=(n-1)c\,g+nHA-A^2
     =(n-1)(c+H^2)g+(n-2)HB-B^2.
\end{equation}
Commuting derivatives in the Codazzi equation gives
\begin{equation}\label{eq:simonsA}
 \Delta A=nHA^2-|A|^2A+ncA-ncH g.
\end{equation}
Indeed, Codazzi and $\divg A=0$ give
\[
 (\Delta A)_{ij}=\sum_k([\nabla_k,\nabla_i]A)_{kj}.
\]
For covariant tensors,
$([\nabla_X,\nabla_Y]A)(Z,W)=-A(R(X,Y)Z,W)-A(Z,R(X,Y)W)$.
Substitution of the Gauss equation
\[
 R(X,Y)Z=c(\langle Y,Z\rangle X-\langle X,Z\rangle Y)
       +A(Y,Z)AX-A(X,Z)AY
\]
gives exactly \eqref{eq:simonsA}. See also
\cite{Sim68,SY94}. Since $\Delta B=\Delta A$,
\[
 \tr(BA^2)=\tr B^3+2Ha^2,\qquad
 \tr(BA)=a^2,\qquad \tr B=0
\]
give the scalar CMC Simons identity
\begin{equation}\label{eq:simons}
 \tfrac12\Delta a^2=|\nabla B|^2+n(c+H^2)a^2-a^4+nH\tr B^3.
\end{equation}
Stability on an open set $\mathcal O\subset M$ means
\begin{equation}\label{eq:stability}
 \int_M\bigl(a^2+n(c+H^2)\bigr)f^2\dV
 \le\int_M|\nabla f|^2\dV,
 \qquad f\in C_c^\infty(\mathcal O).
\end{equation}
Local approximation extends this inequality to compactly supported $W^{1,2}$ functions in $\mathcal O$.

\subsection{Exterior stability and the rigidity input}\label{subsec:exterior}
The following disjoint-support argument is the standard finite-index localization; see \cite{FC85}.
\begin{lemma}\label{lem:exterior}
If $\ind(Q_c)<\infty$, there is a compact set $K\subset M$ such that $M\setminus K$ is strongly stable. Moreover,
\begin{equation}\label{eq:indexcomparison}
 \ind_0(Q_c)\le\ind(Q_c)\le\ind_0(Q_c)+1.
\end{equation}
\end{lemma}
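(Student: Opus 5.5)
We prove the two assertions separately. Both rest on the same observation: a quadratic form on $C_c^\infty(M)$ cannot be negative definite on too large a subspace when the test functions come from pieces with disjoint supports.

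\emph{Exterior stability.} Fix a base point $o\in M$ and argue by contradiction. Suppose that for every $R>0$ the exterior $M\setminus \overline{B_R(o)}$ is not strongly stable. We build inductively functions $f_1,f_2,\ldots\in C_c^\infty(M)$ with pairwise disjoint supports and $Q_c(f_j)<0$. Choose $f_1$ with $Q_c(f_1)<0$. Given $f_1,\dots,f_k$, their supports lie in some ball $B_{R_k}(o)$ because they are compact. Since $M\setminus\overline{B_{R_k}(o)}$ is not stable, there is $f_{k+1}\in C_c^\infty(M\setminus\overline{B_{R_k}(o)})$ with $Q_c(f_{k+1})<0$.

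The bilinear form associated with $Q_c$ is local: $Q_c(f_i,f_j)=0$ when the supports are disjoint. Hence on $\mathrm{span}\{f_1,\dots,f_k\}$ we have
\[
Q_c\Bigl(\sum_j\lambda_jf_j\Bigr)=\sum_j\lambda_j^2Q_c(f_j),
\]
which is negative unless all $\lambda_j=0$. Disjoint supports also make the $f_j$ linearly independent, so $\ind(Q_c)\ge k$ for every $k$. This contradicts finite index. We conclude that some $K=\overline{B_R(o)}$ works; it is compact because $g$ is complete (Hopf--Rinow). Nothing here depends on properness.

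\emph{Index comparison.} The left inequality $\ind_0\le\ind$ holds because $\mathcal V_0\subset C_c^\infty(M)$, so any subspace of $\mathcal V_0$ on which $Q_c$ is negative definite is admissible for $\ind(Q_c)$.

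For the right inequality, let $W\subset C_c^\infty(M)$ be a finite-dimensional subspace on which $Q_c$ is negative definite, with $\dim W=m$. The map $W\to\R$, $f\mapsto\int_Mf\dV$, is linear, so its kernel $W\cap\mathcal V_0$ has dimension at least $m-1$. Since $Q_c$ is still negative definite on this kernel, $\ind_0(Q_c)\ge m-1$. Taking the supremum over $W$ gives $\ind(Q_c)\le\ind_0(Q_c)+1$, with the usual conventions when either side is infinite. In particular $\ind_0(Q_c)<\infty$ if and only if $\ind(Q_c)<\infty$, which is what Corollary~\ref{cor:constrained} requires.

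The only step needing care is the exhaustion in the first part: the new negative direction must be supported strictly outside the supports already chosen. Using closed intrinsic balls of increasing radius, together with the compactness of each support, handles this without any ambient properness.
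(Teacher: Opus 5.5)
Your proposal is correct and follows essentially the same route as the paper: negative test functions with pairwise disjoint supports are chosen outside successively larger compact sets (you make these concrete as closed intrinsic balls, compact by Hopf--Rinow), and the right-hand index inequality comes from the same codimension-at-most-one kernel argument. The index comparison, including the extended-valued case, matches the paper's restriction argument.
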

\begin{proof}
If no such compact set existed, one could choose $f_j\in C_c^\infty(M)$ with pairwise disjoint supports and $Q_c(f_j)<0$. At each step, choose the next negative test outside a compact neighborhood of the previous supports. All mixed terms vanish, so the form is negative definite on the span of any finite subcollection. This contradicts finite index.

The first inequality in \eqref{eq:indexcomparison} follows by restriction. For the second, let $E\subset C_c^\infty(M)$ be any finite-dimensional negative-definite subspace. Since the zero-mean space \eqref{eq:zeroMean} has codimension at most one,
\[
 \dim(E\cap\mathcal V_0)\ge\dim E-1.
\]
Taking suprema over $E$ gives the result, including the extended-valued case. In particular, finite constrained index implies finite strong index.
\end{proof}

\begin{theorem}[Hong--Li--Wang, with its product consequence]\label{thm:HLW}
For $2\le n\le6$, every smooth, connected, complete, boundaryless, two-sided
stable minimal immersion $M^n\to\mathbb R^{n+1}$ is totally geodesic and has
image an affine $n$-plane.
\end{theorem}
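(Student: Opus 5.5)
This statement is imported rather than proved from scratch. For $n=2$ it is do Carmo--Peng and Fischer-Colbrie--Schoen; for $n=3$ it is Chodosh--Li; for $n=4$ it is Chodosh--Li--Minter--Stryker (with Mazet for $n=5$); and for $n=6$ it is Hong--Li--Wang \cite{HLW26}. My plan is therefore to check that the hypotheses here, namely a smooth, complete, two-sided, strongly stable \emph{immersion} with no properness assumption, match those of the cited theorems. I would then derive the phrase ``with its product consequence'' as a short corollary, since that is the form the curvature blow-up in Section~\ref{subsec:compactness} will use.

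The first step is to reduce the cited conclusion to total geodesy. Each cited result states that $A\equiv0$ for complete two-sided stable minimal immersions in $\mathbb R^{n+1}$. Those papers work with immersions, which is the generality of the Green-function and $\mu$-bubble methods. Once $A\equiv0$, the immersion has constant unit normal $N$ and $\langle F,N\rangle$ is constant. Hence $F(M)$ lies in an affine hyperplane $P$, and $F:M\to P$ is a local isometry. A local isometry from a complete connected manifold to a connected one is a covering map, so $F(M)=P$; since $P$ is simply connected, $F$ is an isometry onto $P$.

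The second step is the product consequence. Suppose $M^n\to\mathbb R^{n+1}\times\mathbb R^k$, or a blow-up limit, is a stable minimal immersion whose normal is horizontal, so that the limit splits as $\Sigma^{n-k}\times\mathbb R^k$. Then stability of the product implies stability of the cross-section $\Sigma$. To see this, take test functions $f(x)\varphi(y)$ with $\varphi$ a cutoff of radius $R$ in the $\mathbb R^k$ factor. This gives
\[
\textstyle Q(f\varphi)=\|\varphi\|_2^2\,Q_\Sigma(f)+\|f\|_2^2\|\nabla\varphi\|_2^2 ,
\]
and $\|\nabla\varphi\|_2^2/\|\varphi\|_2^2\to0$ as $R\to\infty$. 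Applying the theorem in lower dimension to $\Sigma$ makes $\Sigma$ flat. For the CMC blow-up limits of Section~\ref{subsec:compactness}, rescaling sends $H\to0$ and the ambient curvature to $0$, so the limits are stable minimal immersions in $\mathbb R^{n+1}$. Stability passes to the limit because the quadratic form converges on compactly supported tests under smooth convergence of sheets. The theorem then applies directly.

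I expect the main obstacle to be the passage to limits in the immersed, nonproper setting. Multiple sheets may converge to a single limit sheet, and one must ensure the limit is still two-sided with a consistent normal so that stability is inherited. I would handle this by working on the abstract pointed manifolds $(M,g_j,p_j)$ with the uniform bound $|A_j|\le 2$ at $p_j$ obtained by point-picking. Cheeger--Gromov type convergence of the immersions, using graphical local sheets of uniform size given by the curvature bound, would then produce a limit immersion of a complete pointed manifold. The normals $N_j$ pass to a global limit normal, which gives two-sidedness. The theorem then forces $|A_\infty|\equiv0$, which contradicts $|A_\infty|(p_\infty)=1$.
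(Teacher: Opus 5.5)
Your proof is correct, but it is organized differently from the paper's. You cite the flatness theorem in each dimension separately: do Carmo--Peng and Fischer-Colbrie--Schoen, then Chodosh--Li, Chodosh--Li--Minter--Stryker, Mazet, and Hong--Li--Wang. You then pass from $A\equiv0$ to an affine plane by the constant-normal and covering argument, and that last step matches the paper's.

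The paper uses only the six-dimensional theorem of Hong--Li--Wang. For $n<6$ it applies that theorem to the product immersion $(x,y)\mapsto(F(x),y)$ of $M\times\mathbb R^{6-n}$ into $\mathbb R^7$. This product is complete, connected, two-sided and minimal, with $|\widetilde A|(x,y)=|A|(x)$. Its stability follows from stability on each slice and Fubini:
\[
\int|\widetilde A|^2f^2\le\int|\nabla_Mf|^2\le\int|\nabla f|^2 .
\]
That is what ``product consequence'' means here: the product goes upward, from $M$ to $M\times\mathbb R^{6-n}$.

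Your cross-section identity
\[
Q(f\varphi)=\|\varphi\|_2^2\,Q_\Sigma(f)+\|f\|_2^2\|\nabla\varphi\|_2^2
\]
proves the opposite implication, from stability of a product to stability of its cross-section. It is true, but the statement does not need it. Its setup $M^n\to\mathbb R^{n+1}\times\mathbb R^k$ is also not a hypersurface, so that paragraph should be dropped rather than repaired.

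Your discussion of nonproper limits belongs to Proposition~\ref{prop:compactness} and Lemma~\ref{lem:curvature}, not to this statement. There the blow-up limit is already an $n$-dimensional stable minimal hypersurface in $\mathbb R^{n+1}$, and no splitting occurs.

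The two routes trade off as follows. Yours needs five deep external results, each of which must be checked to cover nonproper two-sided immersions. The paper's needs one such result plus an elementary Fubini step. It also shows that the lower-dimensional cases are formally contained in the six-dimensional one.
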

\begin{proof}[Derivation from the six-dimensional theorem]
The case $n=6$ is \cite[Theorem~1.1]{HLW26}. For $n<6$ use the product immersion
$\widetilde F(x,y)=(F(x),y)$ of $M\times\mathbb R^{6-n}$ into $\mathbb R^7$.
It is smooth, connected, complete, two-sided and minimal, with
$|\widetilde A|(x,y)=|A|(x)$. For any compactly supported smooth $f(x,y)$,
Fubini and stability on each $M$-slice give
\[
 \int_{M\times\mathbb R^{6-n}}|\widetilde A|^2f^2
 \le\int_{M\times\mathbb R^{6-n}}|\nabla_M f|^2
 \le\int_{M\times\mathbb R^{6-n}}|\nabla f|^2.
\]
The six-dimensional theorem makes this product flat, hence $A=0$ on $M$.
Completeness turns the immersion into a covering of its affine plane; the
plane is simply connected. This argument is also recorded as
\cite[Corollary~1.2]{HLW26}. It is applied to globally stable minimal limits,
not to a product of a merely finite-index nonstable immersion.
\end{proof}

\subsection{Compactness for rescaled CMC immersions}\label{subsec:compactness}
For this subsection only, extend the notation by setting
$X_d^{n+1}$ to be the simply connected complete $(n+1)$-dimensional space form of curvature $d$, for any $d\in\mathbb R$.
\begin{proposition}[CMC compactness with a flat ambient limit]\label{prop:compactness}
Let $c_j\in\mathbb R$ with $c_j\to0$, and let
$F_j:(N_j^n,g_j)\to X_{c_j}^{n+1}$ be smooth two-sided CMC immersions of compact connected manifolds with smooth nonempty boundary. Suppose that points $p_j\in N_j$ satisfy
\begin{equation}\label{eq:compactnesshyp}
 d_{g_j}(p_j,\partial N_j)\longrightarrow\infty,
 \qquad \sup_j\sup_{N_j}|A_j|<\infty.
\end{equation}
After ambient isometries and passage to a subsequence, the immersions converge smoothly in pointed intrinsic charts to a complete, connected, noncompact, boundaryless, two-sided CMC immersion into $\mathbb R^{n+1}$. The mean curvatures converge. If the approximating immersions are strongly stable, then the limit is strongly stable.
\end{proposition}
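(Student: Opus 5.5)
The plan is the standard local-graph compactness argument, run with the ambient metric allowed to vary. Let $\Lambda=\sup_j\sup_{N_j}|A_j|$.

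\textbf{Normalizing the ambient spaces.} Apply ambient isometries so that $F_j(p_j)$ is a fixed base point $o$. Identify a neighborhood of $o$ in $X_{c_j}^{n+1}$ with a fixed Euclidean ball by normal coordinates at $o$. In these coordinates the metrics $\bar g_{c_j}$ converge smoothly to the flat metric on every compact set, since they are explicit analytic functions of $c_j$ and $c_j\to0$. Because $|H_j|\le|A_j|/\sqrt n\le\Lambda/\sqrt n$, we may also pass to a subsequence with $H_j\to H_\infty$.

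\textbf{Uniform local graphs.} There is a radius $r_0>0$, depending only on $\Lambda$ and an upper bound for $|c_j|$, with the following property. For every $x\in N_j$ with $d_{g_j}(x,\partial N_j)>r_0$, the component of $F_j^{-1}$ of an ambient ball of radius $r_0$ containing $x$ is an embedded graph over the tangent plane, in normal coordinates at $F_j(x)$. The graph function $u$ has $|u|+|Du|+|D^2u|$ bounded. Since $H$ is constant, $u$ solves a quasilinear elliptic equation of the form ``mean curvature of the graph in the metric $\bar g_{c_j}$ equals $H_j$''. Its coefficients are smooth and uniformly controlled, so Schauder theory gives uniform $C^{k,\alpha}$ bounds for all $k$.

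Consequently the injectivity radius of $g_j$ is bounded below on $\{d_{g_j}(\cdot,\partial N_j)>r_0\}$, and $|\nabla^k\mathrm{Rm}_{g_j}|$ is bounded there by the Gauss equation. This gives uniform $C^{k}$ bounds on $F_j$ in harmonic or exponential charts of $g_j$.

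\textbf{Passing to the limit.} The main technical point is a Cheeger--Gromov type extraction for immersions rather than embeddings. I would first try the following.
\begin{enumerate}[label=(\roman*)]
\item Apply pointed Cheeger--Gromov compactness to $(N_j,g_j,p_j)$ restricted to $B_{R}(p_j)$ for each $R$. This uses that $d_{g_j}(p_j,\partial N_j)\to\infty$, so the balls eventually avoid the boundary.
\item Use a diagonal subsequence to obtain a complete pointed limit $(M_\infty,g_\infty,p_\infty)$, with diffeomorphisms $\Phi_j$ from exhausting domains into $N_j$ such that $\Phi_j^*g_j\to g_\infty$ smoothly on compact sets.
\item Regard $F_j\circ\Phi_j$ as maps into the fixed coordinate model (the exponential map of $X_{c_j}$ at $o$, which is a global diffeomorphism for $c_j\le0$ and injective on a large ball for small $c_j>0$). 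These maps have uniform $C^k$ bounds, so Arzel\`a--Ascoli gives a smooth limit immersion $F_\infty:M_\infty\to\mathbb R^{n+1}$.
\end{enumerate}
The limit $F_\infty$ is isometric for $g_\infty$. It is CMC with constant $H_\infty$, since the equation passes to the limit. It is boundaryless and complete, as a smooth Cheeger--Gromov limit. It is connected, being a pointed limit. It is noncompact, since its diameter is infinite: $d_{g_j}(p_j,\partial N_j)\to\infty$ gives geodesics of arbitrary length from $p_\infty$ that are minimizing in the limit. Two-sidedness follows because the unit normals $N_j\circ\Phi_j$ converge in $C^k$ along with $F_j\circ\Phi_j$.

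\textbf{Stability.} Let $f\in C_c^\infty(M_\infty)$. Then $f\circ\Phi_j^{-1}$ is compactly supported in $N_j$ away from $\partial N_j$ for large $j$. The quantities $|A_j|^2+nc_j$, the metrics, and the volume forms converge smoothly on $\operatorname{supp}f$, so
\[
Q_{c_j}(f\circ\Phi_j^{-1})\to Q_0(f).
\]
Each term on the left is nonnegative, so $Q_0(f)\ge0$.
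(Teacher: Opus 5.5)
Your proposal is correct and follows essentially the paper's route. It uses the same normal-coordinate normalization of the varying space-form metrics (the chart radius $\pi/\sqrt{c_j}\to\infty$ when $c_j>0$), uniform interior graph charts with Schauder bootstrapping, pointed smooth compactness with a diagonal subsequence, limits of the normals for two-sidedness, and passage of $Q_{c_j}$ to $Q_0$. The only difference is the order of the local steps: the paper first proves the interior injectivity-radius bound by a Fenchel total-curvature argument on short geodesic loops and then builds graphs by lifting radial segments, whereas you take the local graph lemma as standard input and deduce the injectivity radius from it. For that you should require the boundary distance to exceed a fixed multiple of $r_0$, not just $r_0$, though only deep interior points are ever used.
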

The fixed Euclidean case is \cite[Proposition~2.9]{Mir26}. We give the local compactness argument, including the changing space-form metrics, in the form needed here. The geometric chart construction is the intrinsic one in \cite[Lecture~3, proof of Theorem~22]{Whi16}; smooth Riemannian compactness is also described in \cite{HH97}.
\begin{proof}
Write $\Lambda_0=\sup_j\sup_{N_j}|A_j|$. Since $|H_j|\le\Lambda_0/\sqrt n$, a subsequence satisfies $H_j\to H_\infty$.

\emph{Ambient normalization.} Apply ambient isometries to place $F_j(p_j)$ at the origin of normal coordinates. The polar-coordinate form of the space-form metric is
\[
 \bar g_{c_j}=\mathrm dr^2+\operatorname{sn}_{c_j}(r)^2g_{\mathbb S^n},\qquad
 \operatorname{sn}_d(r)=
 \begin{cases}
 \sin(\sqrt d\,r)/\sqrt d,&d>0,\\
 r,&d=0,\\
 \sinh(\sqrt{-d}\,r)/\sqrt{-d},&d<0.
 \end{cases}
\]
In Cartesian normal coordinates these metrics converge in $C^\infty$ on every fixed ball to the Euclidean metric, from either sign of $c_j$. For $c_j>0$ the normal-coordinate radius tends to infinity. Length preservation places the image of each fixed intrinsic ball in the corresponding ambient ball. This treats negative curvature without using the indefinite ambient metric of the hyperboloid model.

\emph{Uniform interior charts.} Gauss gives a uniform intrinsic sectional-curvature bound. At points whose distance from the boundary is bounded below, there is also a uniform positive interior injectivity radius. All radii here are smaller than a
fixed fraction of the distance to the boundary; thus this is an interior
statement, not a claim about injectivity at the boundary. Here are the two ingredients of this familiar local estimate. Comparison gives a positive conjugate-radius bound. If the injectivity radius nevertheless tends to zero, the cut-point alternative produces geodesic loops of lengths tending to zero, with at most one corner. Their images lie in ambient normal-coordinate balls of radii tending to zero. In these coordinates, the metrics and Christoffel symbols are uniformly bounded, while the immersion equation bounds the Euclidean curvature of the images by a uniform constant in terms of $\Lambda_0$ and the ambient geometry. Thus their total Euclidean curvature, apart from the corner, tends to zero. The corner contributes at most $\pi$, contradicting Fenchel's total-curvature bound $2\pi$ for a closed
piecewise smooth curve. More explicitly, if its smooth arcs have Euclidean
curvature at most $C$ and total length $\ell$, then the total curvature is
at most $C\ell+\pi<2\pi$ when $\ell<\pi/C$. The cut-point alternative
is used only below the conjugate radius and below the distance to the
boundary (\cite{Pet16}). The entire argument lies away from the boundary. This is the local immersed chart mechanism used in \cite[Lecture~3, proof of Theorem~22]{Whi16} and \cite[Proposition~2.9]{Mir26}; it concerns a distinguished intrinsic sheet, not the total area of all sheets in an extrinsic ball.

Bounded second fundamental form controls the rotation of tangent planes along short intrinsic curves. Projection of the distinguished local sheet onto its tangent $n$-plane is therefore uniformly nonsingular at a fixed scale. Lifting radial segments constructs a graph on a uniform disk, with bounded slope and second derivatives. After decreasing the disk radius, the graph contains a uniform intrinsic ball. The boundary-distance condition prevents the lifts from leaving the domain. Other sheets passing through the same ambient region are not identified with this sheet.

\emph{Elliptic estimates.} In these charts the CMC equation is a uniformly elliptic second-order graph equation with constant right-hand side $nH_j$ and coefficients determined by the ambient metric. In the Euclidean case it reads
\[
 -\divg_{\mathbb R^n}\left(\frac{Du_j}{\sqrt{1+|Du_j|^2}}\right)=nH_j
\]
for the upward graph normal; reversing the graph orientation reverses both signs. The ambient normal-coordinate metrics have uniform bounds of every order on fixed balls. The uniform slope and second-derivative bounds give uniformly H\"older coefficients on smaller disks. Interior Schauder estimates (\cite[Theorem~6.2]{GT01}), followed by differentiation of the equation, yield uniform $C^{k,\alpha}$ estimates for every $k$ and $0<\alpha<1$. The induced metrics and transition maps consequently satisfy uniform local smooth estimates.

\emph{The pointed limit.} On each fixed intrinsic ball, the sectional-curvature and interior injectivity bounds give a uniformly finite cover by the preceding charts; this follows from volume comparison and the fixed lower volume of smaller chart balls. Smooth pointed compactness and a diagonal subsequence produce a connected limit $(N_\infty,g_\infty,p_\infty)$ and a smooth isometric immersion $F_\infty$. Its image lies in the limiting Euclidean space $\mathbb R^{n+1}$. The distance to the boundary tends to infinity, so the limit is complete and has no boundary. A sequence of minimizing paths from $p_j$ to $\partial N_j$, restricted to successively longer initial segments, converges to a minimizing ray in $N_\infty$; in particular the limit is noncompact.

Choose the same orientation on overlapping intrinsic charts before taking
the subsequence. The equations $\bar\nabla N_j=A_j$ and the graph estimates
give smooth convergence of the normals. Since the normals agree on overlaps
for each $j$, their limits also agree, giving a global normal. This does not
identify distinct sheets that have the same ambient image. The CMC equations pass to the limit with mean curvature $H_\infty$. For each $f\in C_c^\infty(N_\infty)$, use the pointed identifications on a neighborhood of its support to transfer it to $N_j$ and then extend by zero; the support lies strictly inside that neighborhood, so the extension is smooth. Smooth convergence and $c_j\to0$ give
\[
 \int_{N_\infty}(|\nabla f|^2-|A_\infty|^2f^2)\,\mathrm dV_{g_\infty}
 =\lim_{j\to\infty}\int_{N_j}
 \bigl(|\nabla f_j|^2-(|A_j|^2+nc_j)f_j^2\bigr)\,\mathrm dV_{g_j}.
\]
If each $F_j$ is strongly stable, this limit is nonnegative. This proves all assertions.
\end{proof}

\subsection{The global curvature bound}\label{subsec:curvature}
\begin{lemma}[Bounded curvature at finite index]\label{lem:curvature}
Assume Theorem~\ref{thm:HLW}. Every complete two-sided CMC immersion $F:M^n\to X_c^{n+1}$, $c\in\{-1,0,1\}$, of finite strong Morse index has bounded second fundamental form.
\end{lemma}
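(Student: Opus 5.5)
The argument is a standard point-picking blow-up combined with Lemma~\ref{lem:exterior}, Proposition~\ref{prop:compactness} and Theorem~\ref{thm:HLW}. By Lemma~\ref{lem:exterior} there is a compact set $K\subset M$ such that $M\setminus K$ is strongly stable. Since $|A|$ is bounded on any compact neighborhood of $K$, it suffices to rule out a sequence $x_j\in M$ with $d_g(x_j,K)\to\infty$ and $|A|(x_j)\to\infty$. (If $|A|$ were unbounded only near $K$, continuity would already give a contradiction.) Suppose therefore that $\sup_M|A|=\infty$. Since $|A|$ is bounded on the compact $1$-neighborhood of $K$, we can choose $x_j$ with $|A|(x_j)\to\infty$ and $r_j:=\tfrac12 d_g(x_j,K)$ either tending to infinity or bounded below. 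In either case $r_j|A|(x_j)\to\infty$.

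\emph{Point picking.} On the closed ball $\bar B_{r_j}(x_j)$, which is compact by completeness and disjoint from $K$, maximize $\phi(y)=(r_j-d_g(y,x_j))\,|A|(y)$ at a point $p_j$. Put $\lambda_j=|A|(p_j)$ and $\sigma_j=r_j-d_g(p_j,x_j)$. Then
\[
 \sigma_j\lambda_j\ge r_j|A|(x_j)\to\infty,
\]
and $|A|\le2\lambda_j$ on $B_{\sigma_j/2}(p_j)$. Now rescale the ambient metric by $\lambda_j^2$. This gives CMC immersions into $X^{n+1}_{c_j}$ with $c_j=c\lambda_j^{-2}\to0$, mean curvature $H\lambda_j^{-1}\to0$, and second fundamental form satisfying $|A_j|\le2$ and $|A_j|(p_j)=1$. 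Take $N_j$ to be a compact smooth domain with $B_{\sigma_j\lambda_j/4}(p_j)\subset N_j\subset B_{\sigma_j/2}(p_j)$ in the rescaled metric. A smoothed sublevel set of a smoothed distance function works here, choosing regular values by Sard. The rescaled boundary distance then tends to infinity.

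\emph{Limit and rigidity.} The domains $N_j$ lie in $M\setminus K$, so they are strongly stable. Strong stability is scale invariant, because both terms of $Q$ scale the same way once $c$ is replaced by $c_j$. Proposition~\ref{prop:compactness} then yields a complete, connected, two-sided, strongly stable immersion into $\mathbb R^{n+1}$. Its mean curvature is $\lim H\lambda_j^{-1}=0$, and by smooth convergence $|A_\infty|(p_\infty)=1$. The limit is a stable minimal immersion, and since $2\le n\le6$, Theorem~\ref{thm:HLW} shows it is totally geodesic. This contradicts $|A_\infty|(p_\infty)=1$.

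\emph{Main obstacle.} The delicate step is matching the blow-up with the hypotheses of Proposition~\ref{prop:compactness}. Three things must hold together. First, the rescaled boundary distance must diverge; this is where the factor $r_j|A|(x_j)\to\infty$ from point picking is essential, and not merely $|A|(x_j)\to\infty$. Second, the domains must be compact with smooth boundary. Third, the mean curvature of the limit must vanish so that Theorem~\ref{thm:HLW} applies. The last point is automatic because $H$ is fixed and $\lambda_j\to\infty$. I would also check that no Euclidean-specific input is needed in the case $c=1$: the proposition already handles $c_j\to0$ from either sign.
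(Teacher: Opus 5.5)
Your proposal is correct and is essentially the paper's proof. Both arguments take exterior stability from Lemma~\ref{lem:exterior}, use point picking with weight $(r-d_g)\,|A|$, rescale by $\lambda_j^2$ so that $|\widetilde A_j|\le2$ with diverging boundary distance, and conclude with Proposition~\ref{prop:compactness} and Theorem~\ref{thm:HLW}; the only cosmetic difference is that you use the ball of radius $\tfrac12 d_g(x_j,K)$ instead of a unit ball at distance $>2$ from $K$. Two small fixes: the sandwich $B_{\sigma_j\lambda_j/4}(p_j)\subset N_j\subset B_{\sigma_j/2}(p_j)$ mixes rescaled and unrescaled radii (in the metric $g$ it should read $B_{\sigma_j/4}(p_j)\subset N_j\subset B_{\sigma_j/2}(p_j)$), and $N_j$ should be taken connected (the component containing the inner ball), as Proposition~\ref{prop:compactness} requires.
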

\begin{proof}
For a compact domain the assertion is immediate. Otherwise choose a nonempty compact $K$ from Lemma~\ref{lem:exterior}. If $|A|$ were unbounded, smoothness on compact sets would give points $p_j$ such that
\[
 m_j:=|A|(p_j)\longrightarrow\infty,
 \qquad d_g(p_j,K)>2.
\]
On the compact ball $\overline{B_1(p_j)}$, maximize
\[
 x\longmapsto |A|(x)(1-d_g(p_j,x)).
\]
Let $x_j$ be an interior maximum point, put $\lambda_j=|A|(x_j)$, and set
\[
 r_j=\tfrac12(1-d_g(p_j,x_j))>0.
\]
Maximality gives
\begin{equation}\label{eq:pointpicking}
 \lambda_j\ge m_j,\qquad
 \lambda_j r_j\ge\tfrac12m_j,\qquad
 |A|\le2\lambda_j\quad\text{on }B_{r_j}(x_j).
\end{equation}
Indeed, $1-d_g(p_j,x)\ge r_j$ on this smaller ball. These balls lie in $M\setminus K$.

Choose smooth connected relatively compact domains with
\[
 \overline{B_{r_j/2}(x_j)}\subset\Omega_j,
 \qquad \overline{\Omega_j}\subset B_{r_j}(x_j).
\]
Smooth neighborhood approximation of the compact inner ball provides them; take the connected component containing that ball. Their boundaries are nonempty. Multiply the ambient and induced metrics by $\lambda_j^2$ and apply ambient isometries at $F(x_j)$. On $N_j=\overline{\Omega_j}$ the rescaled data satisfy
\begin{equation}\label{eq:blowupdata}
\begin{aligned}
 |\widetilde A_j|&\le2,&
 |\widetilde A_j|(x_j)&=1,&
 \widetilde H_j&=H/\lambda_j\longrightarrow0,\\
 \widetilde c_j&=c/\lambda_j^2\longrightarrow0,&
 d_{\lambda_j^2g}(x_j,\partial N_j)&\ge\lambda_jr_j/2\longrightarrow\infty.
\end{aligned}
\end{equation}
Strong stability is preserved: in intrinsic dimension $n$,
\begin{equation}\label{eq:scaledQ}
 \widetilde Q_j(f)=\lambda_j^{n-2} Q_c(f)
\end{equation}
for tests on $\Omega_j$. Proposition~\ref{prop:compactness} therefore gives a complete connected two-sided stable minimal immersion into $\mathbb R^{n+1}$ with $|A_\infty|(p_\infty)=1$. This contradicts Theorem~\ref{thm:HLW}. Hence $|A|$ is globally bounded.
\end{proof}

\section{Green functions on a stable exterior}\label{sec:green}
In Sections~\ref{sec:green}--\ref{sec:cutoff} we prove Theorem~\ref{thm:bounded}. Until its proof is completed, assume that $M$ is noncompact, that $\Lambda:=\sup_M|A|<\infty$, and that \eqref{eq:stability} holds with $\mathcal O=M\setminus K$. Enlarge $K$ to a nonempty compact set. We argue by contradiction under
\begin{equation}\label{eq:positiveGamma}
 \gamma:=c+H^2>0,\qquad\text{and, when }c=-1,\quad\text{condition \eqref{eq:threshold}.}
\end{equation}
Reversing $N$ if necessary, take $H\ge0$; the case $H=0$ is retained when $c=1$. Equation~\eqref{eq:Ric} gives
\begin{equation}\label{eq:Riclower}
 \Ric\ge-\bigl((n-1)\max\{-c,0\}+nH\Lambda+\Lambda^2\bigr)g.
\end{equation}
Exterior stability gives the Dirichlet spectral lower bound $n\gamma$ outside $K$. A separate argument is needed to obtain a strictly positive global spectral bottom.

\subsection{Intrinsic volume and the global spectral bottom}
\begin{lemma}[Small-ball lower bound]\label{lem:volume}
Set $\Theta=\sqrt{H^2+\max\{c,0\}}$. For every $x\in M$ and $r>0$,
\begin{equation}\label{eq:volume}
 \Vol_g B_r(x)\ge\omega_n r^ne^{-n\Theta r},
 \qquad \omega_n=\frac{\pi^{n/2}}{\Gamma(n/2+1)}.
\end{equation}
In particular, the noncompact manifold $M$ has infinite intrinsic volume.
\end{lemma}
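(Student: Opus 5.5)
The plan is a monotonicity argument for intrinsic balls, driven by an ambient distance function restricted to $M$. Fix $x\in M$, let $\rho=d_{\bar g_c}(F(x),F(\cdot))$ and $u=\tfrac12\rho^2$. Since $F$ preserves lengths, $\rho\le d_g(x,\cdot)$, so $\rho<r$ on $B_r(x)$. Write $V(r)=\Vol_gB_r(x)$.

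\emph{Step 1: a Laplacian lower bound.} For the restriction of an ambient function,
\[
 \Delta_M u=\tr_{TM}\bar\nabla^2u+nH\,\ip{\bar\nabla u}{N},
\]
up to the sign fixed by \eqref{eq:conventions}. Since $|\bar\nabla u|=\rho$, the second term is at least $-nH\rho$.

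For the Hessian term I use comparison in each space form. We have $\bar\nabla^2u=\mathrm d\rho^2+\rho\,\mathrm{ct}_c(\rho)(\bar g_c-\mathrm d\rho^2)$, where $\mathrm{ct}_c$ is $\coth$, $1/\rho$ or $\cot$ according to $c=-1,0,1$. For $c\le0$ this gives $\bar\nabla^2u\ge\bar g_c$, hence $\Delta_Mu\ge n(1-H\rho)\ge n(1-\Theta\rho)$.

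For $c=1$ I first reduce to small radii. The function $\phi(r)=\omega_nr^ne^{-n\Theta r}$ is decreasing for $r\ge1/\Theta$, while $V$ is increasing. It therefore suffices to prove \eqref{eq:volume} for $r\le1/\Theta\le1$. Then $\rho<1<\pi/2$, and the elementary bound $\rho\cot\rho\ge1-\rho^2/2$ on $[0,1]$ gives
\[
 \Delta_Mu\ge n\Bigl(1-\tfrac{\rho^2}{2}-H\rho\Bigr).
\]
Because $\rho/2\le 1/(2\Theta)\le 1/(\Theta+H)=\Theta-H$, this is again at least $n(1-\Theta\rho)$.

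\emph{Step 2: integration against a Lipschitz cutoff.} To avoid regularity of $\partial B_r$, integrate $\Delta_Mu$ against $\psi_\varepsilon(d_g(x,\cdot))$, where $\psi_\varepsilon$ is piecewise linear, equal to $1$ on $[0,r-\varepsilon]$ and to $0$ on $[r,\infty)$. The test function has compact support by completeness. Integration by parts gives
\[
 \int\psi_\varepsilon\,\Delta_Mu=\frac1\varepsilon\int_{B_r\setminus B_{r-\varepsilon}}\ip{\nabla u}{\nabla d_g}.
\]
The integrand is bounded by $|\nabla u|\le\rho<r$. Letting $\varepsilon\to0$ and using the coarea formula with $|\nabla d_g|=1$ a.e., we get for a.e.\ $r$
\[
 n(1-\Theta r)\,V(r)\le r\,V'(r).
\]
Here $V$ is locally absolutely continuous, being monotone with derivative the area of the distance spheres.

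\emph{Step 3: integration of the ODE.} The inequality says $(\log V)'\ge n/r-n\Theta$. Integrating from $\varepsilon$ to $r$ and using $V(\varepsilon)=\omega_n\varepsilon^n(1+o(1))$ from smoothness at $x$ yields \eqref{eq:volume}; for $c=1$ this is first obtained for $r\le1/\Theta$ and then extended by the reduction in Step 1.

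\emph{Infinite volume.} Since $M$ is complete and noncompact, it contains infinitely many disjoint unit balls, centered along a ray at mutual distance at least $2$. Each has volume at least $\omega_ne^{-n\Theta}$, so $\Vol_gM=\infty$.

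The main obstacle is the spherical Hessian comparison, where $\rho\cot\rho$ degenerates. The monotonicity reduction to $r\le1/\Theta$ together with the precise choice $\Theta=\sqrt{H^2+1}$ is exactly what absorbs it. The hyperbolic and Euclidean cases are direct, since negative ambient curvature only helps the Hessian bound.
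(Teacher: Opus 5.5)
Your proof is correct. For $c\le0$ it is essentially the paper's argument: squared ambient distance, $\bar\nabla^2(\rho^2/2)\ge\bar g_c$, the bound $|\nabla u|\le r$ on $B_r(x)$, and integration of $(\log V)'\ge n/r-n\Theta$.

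The genuine difference is the spherical case. The paper composes with $\mathbb S^{n+1}(1)\subset\mathbb R^{n+2}$ and uses the chordal function $|F-F(x)|^2$. There $\Delta F=-nF-nHN$ has norm exactly $n\sqrt{1+H^2}=n\Theta$. This gives $\Delta u\ge 2n-2n\Theta r$ for every $r>0$, with no cut locus and no small-radius restriction. It also explains $\Theta$ as the length of the mean-curvature vector of $M$ in $\mathbb R^{n+2}$.

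You instead keep geodesic distance in $\mathbb S^{n+1}$, which forces you to control $\rho\cot\rho$. Your ingredients all check out and give the same constant:
\begin{itemize}
\item the pointwise bound $\bar\nabla^2u(e,e)\ge\rho\cot\rho$ for unit $e$;
\item the inequality $\rho\cot\rho\ge1-\rho^2/2$ on $[0,1]$;
\item the identity $\Theta-|H|=1/(\Theta+|H|)$;
\item the reduction to $r\le1/\Theta$, using monotonicity of $V$ and the decrease of $\omega_nr^ne^{-n\Theta r}$ on $[1/\Theta,\infty)$.
\end{itemize}
The reduction is equivalent to noting that for $r>1/\Theta$ the inequality $(\log V)'\ge n/r-n\Theta$ is automatic from $V'\ge0$.

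Your route is more uniform: one Hessian-comparison mechanism covers all three space forms, and it adapts, within the convexity radius, to ambient manifolds with sectional curvature bounded above. The paper's extrinsic route is global and needs no auxiliary inequality for $\cot$. Your Lipschitz radial cutoff in Step 2 is the alternative the paper itself mentions to Gauss--Green on finite-perimeter balls, so that is not a real difference.

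Two cosmetic points:
\begin{itemize}
\item Write $-n|H|\rho$, or invoke the standing normalization $H\ge0$.
\item In Step 3 you do not need absolute continuity of $V$. For a nondecreasing function, $\int_\varepsilon^r(\log V)'\le\log V(r)-\log V(\varepsilon)$ holds, and that is exactly the direction you use. Monotonicity alone would not give absolute continuity anyway; the coarea formula does.
\end{itemize}
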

\begin{proof}
For $c=0$, use the Euclidean immersion; for $c=1$, compose with $\mathbb S^{n+1}(1)\subset\mathbb R^{n+2}$. Respectively,
\[
 \Delta F=-nHN,\qquad \Delta F=-nF-nHN.
\]
In the latter formula the summands are orthogonal, so in both cases $|\Delta F|=n\Theta$. For $u(y)=|F(y)-F(x)|^2$, length preservation gives, on $B_r(x)$,
\[
 |\nabla u|\le2r,\qquad \Delta u\ge2n-2n\Theta r.
\]

For $c=-1$, instead let $\rho$ be ambient distance from $F(x)$ and set $u=\rho^2\circ F$. The squared distance is smooth on the simply connected hyperbolic space, including at its center. Away from that center,
\[
 \bar\nabla^2\rho=\coth\rho\,(\bar g-\mathrm d\rho\otimes\mathrm d\rho),
 \qquad \bar\nabla^2(\rho^2)\ge2\bar g,
\]
since $\rho\coth\rho\ge1$. The trace formula for an immersed function is
\[
 \Delta_M u=\sum_{i=1}^n\bar\nabla^2(\rho^2)(dF(e_i),dF(e_i))
       -nH\langle\bar\nabla(\rho^2),N\rangle.
\]
Because $\rho\circ F\le d_g(x,\cdot)$, this gives $\Delta_Mu\ge2n-2n|H|r$ and $|\nabla_Mu|\le2r$ on $B_r(x)$. These are the same inequalities with $\Theta=|H|$. No positive-definite norm on a Lorentzian ambient space is used.

Writing $V(r)=\Vol_g B_r(x)$, the divergence theorem on distance balls and coarea imply, for almost every $r>0$,
\[
 (2n-2n\Theta r)V(r)\le\int_{B_r(x)}\Delta u\dV\le2rV'(r).
\]
This is the Gauss--Green formula for almost every finite-perimeter distance ball and the coarea formula; equivalently one may use Lipschitz radial approximation (\cite[Chapters~3 and~5]{EG15}). Integrating $(\log V)'\ge n/r-n\Theta$ and using $V(\varepsilon)/(\omega_n\varepsilon^n)\to1$ proves \eqref{eq:volume}. Noncompactness and completeness give infinitely many disjoint fixed-radius intrinsic balls. Their uniform positive volume proves the final assertion.
\end{proof}

Define
\begin{equation}\label{eq:lambda0}
 \lambda_0=\inf_{0\ne f\in C_c^\infty(M)}
 \frac{\int_M|\nabla f|^2\dV}{\int_M f^2\dV}.
\end{equation}
\begin{lemma}[From an exterior gap to a global positive spectrum]\label{lem:spectrum}
Under the standing assumptions, $\lambda_0>0$.
\end{lemma}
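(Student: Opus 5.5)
We argue by contradiction and combine the exterior Dirichlet gap with the infinite volume from Lemma~\ref{lem:volume}. Suppose $\lambda_0=0$. Then there are $f_j\in C_c^\infty(M)$ with $\int_M f_j^2\dV=1$ and $\int_M|\nabla f_j|^2\dV\to0$. Fix a precompact smooth open set $U\supset K$ and a cutoff $\eta\in C_c^\infty(M)$ with $0\le\eta\le1$ and $\eta=1$ on a neighborhood of $K$, supported in $U$. Split $f_j=\eta f_j+(1-\eta)f_j$. The second piece is supported in $M\setminus K$. Since $a^2\ge0$, the stability inequality \eqref{eq:stability} gives
\[
 n\gamma\int_M(1-\eta)^2f_j^2\dV\le\int_M|\nabla((1-\eta)f_j)|^2\dV\le 2\int_M|\nabla f_j|^2\dV+2\|\nabla\eta\|_\infty^2\int_U f_j^2\dV.
\]
It therefore suffices to show that $\int_U f_j^2\dV\to0$. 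Once this holds, the mass of $f_j$ on the support of $1-\eta$ also tends to zero, which contradicts $\int f_j^2=1$.

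The key step is this local vanishing. The natural tool is a Poincaré-type inequality on a bounded domain with "free" boundary, made possible by infinite volume. Choose a larger connected precompact smooth domain $W\supset\overline U$. Concentration of $f_j$ on $U$ with vanishing gradient should force $f_j$ to be nearly a nonzero constant on $W$. On the other hand, $f_j$ is compactly supported, so it must drop to zero somewhere. Making this quantitative on $W$ alone is impossible, since $f_j$ may vanish far away. The cleaner route is to use the exterior stability itself as the coercive term at infinity.

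Consider $\psi$, the Dirichlet-type minimizer of $\int|\nabla f|^2+n\gamma\int_{M\setminus U'}f^2$ with $U'$ a neighborhood of $K$. Equivalently, establish the inequality
\[
 \int_U f^2\dV\le C\Bigl(\int_M|\nabla f|^2\dV+\int_{W\setminus U}f^2\dV\Bigr),\qquad f\in C_c^\infty(M),
\]
by the standard Poincaré--Wirtinger inequality on the connected bounded Lipschitz domain $W$. Write $\bar f$ for the mean of $f$ over $W$. Then $\int_W|f-\bar f|^2\le C_W\int_W|\nabla f|^2$. Since $W\setminus U$ has positive volume, $|\bar f|^2\Vol(W\setminus U)\le 2\int_{W\setminus U}f^2+2C_W\int_W|\nabla f|^2$, and this controls $\int_U f^2$.

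Feed this into the displayed estimate. Choose $\eta$ so that $1-\eta=1$ on $W\setminus U$; this is possible because the support of $\eta$ can be placed in a small neighborhood $U$ of $K$ and $W$ taken larger. Then
\[
 \int_{W\setminus U}f_j^2\le\frac{1}{n\gamma}\Bigl(2\int|\nabla f_j|^2+2\|\nabla\eta\|^2_\infty\int_U f_j^2\Bigr).
\]
This gives the system
\[
 x_j\le C(\epsilon_j+y_j),\qquad y_j\le C'(\epsilon_j+x_j),
\]
where $x_j=\int_U f_j^2$, $y_j=\int_{W\setminus U}f_j^2$ and $\epsilon_j=\int_M|\nabla f_j|^2$.

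The main obstacle is that the constants in this loop need not have product less than one. The gradient of $\eta$ is of size $1/\mathrm{dist}$, and the Poincaré constant depends on $W$. I would resolve this by choosing the geometry so that the transition region for $\eta$ has width $L$. The cutoff then has $\|\nabla\eta\|^2\sim L^{-2}$, while $\Vol(W\setminus U)$ grows without bound by Lemma~\ref{lem:volume}. Here $W\setminus U$ is taken to be the transition annulus together with the region beyond it, and we use that the Poincaré constant on the transition annulus is controlled via the Ricci lower bound \eqref{eq:Riclower}. If this balancing proves delicate, the fallback is a compactness argument. The functions $f_j$ are bounded in $W^{1,2}_{\mathrm{loc}}$, so by Rellich they converge locally in $L^2$ to a function $f$ with $\nabla f=0$, that is, a constant. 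The exterior estimate bounds $\int_{M\setminus\mathrm{supp}\,\eta}f_j^2$ uniformly, so by Fatou $f\in L^2(M)$. Infinite volume then forces $f=0$. Local $L^2$ convergence gives $x_j\to0$, which closes the argument without any constant-balancing.
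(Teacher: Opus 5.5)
Your compactness fallback is exactly the paper's proof, and it is complete on its own, so the Poincar\'e--Wirtinger detour can be dropped. The fallback runs as follows: Rellich compactness and a diagonal subsequence give an $L^2_{\mathrm{loc}}$ limit with zero gradient, hence a constant; Fatou's lemma and $\Vol_g(M)=\infty$ force that constant to vanish; the exterior stability inequality applied to the cutoff product (your $(1-\eta)f_j$, the paper's $\zeta_0u_j$) then removes the remaining mass.

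The detour would also close, but not by widening the transition of $\eta$ or by using the Ricci bound. Instead, fix $U$ and $\eta$ and enlarge the connected domain $W$. The coefficient of $y_j$ in your bound for $x_j$ is of order $\Vol(U)/\Vol(W\setminus U)\to0$, while the Poincar\'e constant of $W$ only multiplies $\epsilon_j$.
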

\begin{proof}
If $\lambda_0=0$, choose $u_j\in C_c^\infty(M)$ with $\|u_j\|_2=1$ and $\|\nabla u_j\|_2\to0$. Rellich compactness on relatively compact coordinate domains (\cite[Chapter~7]{GT01}), followed by a diagonal subsequence, give convergence in $L^2_{\mathrm{loc}}$ to a function with zero weak gradient. Connectedness makes the limit a constant. Fatou's lemma and $\Vol_g(M)=\infty$ force that constant to be zero.

Choose $0\le\zeta_0\le1$ smooth, zero on a neighborhood of $K$, and equal to one outside a larger compact set. Exterior stability yields
\[
 n\gamma\int_M\zeta_0^2u_j^2\dV
 \le\int_M|\nabla(\zeta_0 u_j)|^2\dV
 \le2\int_M|\nabla u_j|^2\dV
       +2\int_Mu_j^2|\nabla\zeta_0|^2\dV\longrightarrow0.
\]
The remaining $L^2$ mass also tends to zero because it lies in a fixed compact set. This contradicts $\|u_j\|_2=1$.
\end{proof}
\begin{remark}\label{rem:lambda0}
The exterior lower bound $n\gamma$ is not asserted to hold globally. From now on the symbol $\lambda_0$ always denotes the positive number in \eqref{eq:lambda0}, not $n\gamma$. This distinction is essential when only exterior stability is assumed.
\end{remark}

\subsection{A unit-pole Green function and its level sets}
\begin{lemma}\label{lem:green}
Fix $p\in M$. There is a minimal positive Green function $G$ with
\begin{equation}\label{eq:green}
 -\Delta G=\delta_p,\qquad
 G(x)\longrightarrow0\quad\text{as }d_g(p,x)\longrightarrow\infty.
\end{equation}
For every $t>0$,
\begin{equation}\label{eq:truncation}
 \lambda_0\int_M\min\{G,t\}^2\dV\le t.
\end{equation}
Every positive finite Green slab is a compact subset of $M\setminus\{p\}$, and for every positive regular value $t$,
\begin{equation}\label{eq:unitflux}
 \int_{\{G=t\}}|\nabla G|\dAg=1.
\end{equation}
There are $t_0>0$ and $C_G<\infty$ such that
\begin{equation}\label{eq:gradient}
 \mathcal O_G:=\{0<G<t_0\}\subset M\setminus K,
 \qquad \frac{|\nabla G|}{G}\le C_G\quad\text{on }\mathcal O_G.
\end{equation}
\end{lemma}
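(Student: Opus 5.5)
The plan is to build $G$ as the increasing limit of Dirichlet Green functions on an exhaustion, with every quantitative statement derived from two inputs. The first is the positive spectral bottom $\lambda_0$ of Lemma~\ref{lem:spectrum}. The second is the local geometry supplied by the Ricci lower bound \eqref{eq:Riclower} and the small-ball volume bound of Lemma~\ref{lem:volume}.

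\emph{Existence and the truncation estimate.} Fix a smooth exhaustion $p\in\Omega_1\Subset\Omega_2\Subset\cdots$ and let $G_i>0$ be the Dirichlet Green function of $\Omega_i$ with pole $p$. The maximum principle makes $G_i$ increasing in $i$. Since $\min\{G_i,t\}$ is a legitimate compactly supported $W^{1,2}$ test function, the definition \eqref{eq:lambda0} gives
\[
 \lambda_0\int\min\{G_i,t\}^2\le\int_{\{G_i<t\}}|\nabla G_i|^2
 =\int_0^t\!\!\int_{\{G_i=\tau\}}|\nabla G_i|\,\mathrm dA\,\mathrm d\tau=t,
\]
using coarea and unit flux for the Dirichlet problem on regular levels. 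Next I rule out $G_i\to\infty$ away from $p$, which is the only delicate point in existence. Fix a compact set $\mathcal C\subset M\setminus\{p\}$. If $G_i(x)\ge T$ somewhere on $\mathcal C$, then Harnack on fixed small balls along chains in $M\setminus\{p\}$ forces $G_i\ge cT$ on a ball of fixed radius. Here the Harnack constants are uniform by \eqref{eq:Riclower} and the Cheng--Yau estimate, and the ball has volume bounded below by \eqref{eq:volume}. For $T$ large this contradicts the truncation bound with $t=1$. Hence $G_i$ is locally bounded off $p$, and it converges locally smoothly to a positive Green function $G$; minimality is standard. Fatou then yields \eqref{eq:truncation}.

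\emph{Decay, compact slabs, and flux.} For decay, suppose $G(x_k)\ge\varepsilon$ along $d_g(p,x_k)\to\infty$. Since $G$ is harmonic on $B_1(x_k)$ for large $k$, Harnack gives $G\ge c\varepsilon$ on $B_{1/2}(x_k)$. Passing to a subsequence, these balls are disjoint, and each has volume at least a fixed $v_0$ by \eqref{eq:volume}. Then $\int\min\{G,1\}^2=\infty$, contradicting \eqref{eq:truncation}. Decay makes $\{G\ge s\}$ bounded and hence compact. Since $G\to\infty$ at $p$, every slab $\{s\le G\le t\}$ with $0<s\le t<\infty$ is a compact subset of $M\setminus\{p\}$. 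For \eqref{eq:unitflux}, integrate $-\Delta G=\delta_p$ over the compact set $\{G\ge t\}$, which contains a small ball around $p$. The divergence theorem, together with the local asymptotics of $G$ near $p$, gives outward flux one through the regular level $\{G=t\}$.

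\emph{Gradient bound.} Let $K'$ be the compact set $K\cup\overline{B_2(p)}$, and choose $t_0<\min_{K'}G$, which is positive. Then $\mathcal O_G=\{0<G<t_0\}$ is disjoint from $K'$, so every $x\in\mathcal O_G$ has $B_1(x)\subset M\setminus\{p\}$ with $G$ positive and harmonic there. The Cheng--Yau gradient estimate with the Ricci bound \eqref{eq:Riclower} gives $|\nabla\log G|(x)\le C_G$, where $C_G$ depends only on $n$, $c$, $H$ and $\Lambda$. This proves \eqref{eq:gradient}.

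The main obstacle is the uniform local bound on $G_i$, equivalently the finiteness of the limit. It is handled by combining the $\lambda_0$ truncation estimate with Harnack and the uniform volume lower bound. The remaining steps are routine.
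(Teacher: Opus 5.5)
Your route is the paper's: a monotone Dirichlet exhaustion, the $\lambda_0$ truncation bound, uniform Cheng--Yau Harnack constants together with Lemma~\ref{lem:volume} for local boundedness and decay, and Cheng--Yau on unit balls outside $K\cup\overline{B_2(p)}$ for \eqref{eq:gradient}. Your coarea computation of $\int|\nabla\min\{G_i,t\}|^2=t$ is an equivalent substitute for testing the Green equation. However, the local bound on $G_i$, which you correctly identify as the delicate point, fails as written. Suppose $G_i\ge cT$ on one ball of volume at least $v_0$, where $c$ is your Harnack constant. The truncation bound at the fixed level $t=1$ then gives only $\lambda_0v_0\min\{cT,1\}^2\le1$, that is, $\lambda_0v_0\le1$ once $cT\ge1$. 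This is independent of $T$, so no contradiction arises as $T\to\infty$.

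The estimate $\lambda_0\int\min\{G_i,t\}^2\le t$ grows linearly in $t$. A single ball on which $G_i\ge t$ contributes $t^2v_0$, which grows quadratically. The truncation level must therefore be comparable to $T$. Taking $t=cT$ gives $\lambda_0v_0c^2T^2\le cT$, hence $T\le(c\lambda_0v_0)^{-1}$. This is the paper's argument, which uses $t=G_j(x)$ together with $G_j\ge c_xG_j(x)$ on $B_r(x)$. Your decay step is unaffected: there, infinitely many disjoint balls make $\int\min\{G,1\}^2$ infinite at any fixed level. You should also say why the limit keeps the unit pole. For instance, write $G_i=G_D+h_i$ near $p$, with $h_i$ harmonic and uniformly bounded, as the paper does. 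With these two corrections the proof is complete.
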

\begin{proof}
Choose a nested exhaustion by smooth connected relatively compact domains $\Omega_j$ containing $p$. Let $G_j$ be the unit-pole Dirichlet Green function on $\Omega_j$; see \cite[Chapter~17]{Li12} and \cite{Gri09}. The maximum principle gives $G_j\le G_{j+1}$. The truncation $v_{j,t}=\min\{G_j,t\}$, extended by zero, belongs to $W_0^{1,2}(\Omega_j)$ and is constant equal to $t$ near $p$. Testing the Green equation gives
\begin{equation}\label{eq:truncatedenergy}
 \int_{\Omega_j}|\nabla v_{j,t}|^2\dV=t.
\end{equation}
For this test, take smooth approximants that remain identically $t$ on a fixed smaller pole neighborhood while approximating on its complement. Their gradients converge in $L^2$, and their values at $p$ remain $t$. Thus the right-hand side is exactly $t$; no continuity of point evaluation on $W^{1,2}$ is used. Applying \eqref{eq:lambda0} by density yields
\begin{equation}\label{eq:truncj}
 \lambda_0\int_M\min\{G_j,t\}^2\dV\le t.
\end{equation}

Fix $x\ne p$ and a small ball $B_{2r}(x)$ disjoint from $p$. For all large $j$, Harnack's inequality gives $G_j\ge c_xG_j(x)$ on $B_r(x)$, with $0<c_x\le1$ independent of $j$. Taking $t=G_j(x)$ in \eqref{eq:truncj} gives
\[
 \lambda_0c_x^2G_j(x)^2\Vol_g B_r(x)\le G_j(x),
\]
so $G_j(x)$ is uniformly bounded. Harnack's inequality and interior elliptic estimates give smooth convergence off $p$ to a positive harmonic function $G$. To retain the unit pole, fix a small smooth domain $D$ about $p$ and subtract its Dirichlet Green function $G_D$. The functions $G_j-G_D$ are harmonic across $p$, with bounded convergent boundary values on $\partial D$. Their harmonic limit shows that $-\Delta G=\delta_p$.
Writing $r=d_g(p,\cdot)$ and $\sigma_{n-1}=|\mathbb S^{n-1}|$, the local
Dirichlet singularity is $G_D(r)\sim[(n-2)\sigma_{n-1}]^{-1}r^{2-n}$ for
$n\ge3$, and $G_D(r)=-(2\pi)^{-1}\log r+O(1)$ for $n=2$.
The harmonic correction is bounded near the pole, so in both cases
$G\to\infty$ at $p$. The exhaustion construction gives minimality. Fatou's lemma in \eqref{eq:truncj} proves \eqref{eq:truncation}.

Choose a constant $K_{\mathrm{Ric}}\ge0$ such that $\Ric\ge-(n-1)K_{\mathrm{Ric}}^2g$, using \eqref{eq:Riclower}. The local Cheng--Yau estimate \cite{CY75} gives
\begin{equation}\label{eq:CY}
 \sup_{B_r(x)}|\nabla\log v|
 \le C_n(r^{-1}+K_{\mathrm{Ric}})
\end{equation}
for positive harmonic $v$ on $B_{2r}(x)$. Therefore fixed-radius balls avoiding the pole have a uniform Harnack constant for $G$.

If $G$ did not tend to zero at infinity, there would be $\varepsilon>0$ and pairwise disjoint balls $B_1(x_j)$ escaping the pole such that $G(x_j)\ge\varepsilon$. Equation~\eqref{eq:CY} implies $G\ge c_*\varepsilon$ on each ball, with $c_*>0$ independent of $j$. Lemma~\ref{lem:volume} then implies
\[
 \int_M\min\{G,c_*\varepsilon\}^2\dV=\infty,
\]
contrary to \eqref{eq:truncation}. This proves decay.

Completeness, decay at infinity, and blow-up at the pole imply that $\{\alpha\le G\le\beta\}$ is compact for $0<\alpha<\beta<\infty$. For a positive regular value $t$, the open superlevel set with its pole included is relatively compact; its closure is a compact region with smooth boundary. Integrating the unit-pole equation over that region gives \eqref{eq:unitflux}, because the outward conormal is $-\nabla G/|\nabla G|$. Sard's theorem makes the formula available for almost every $t>0$.

Finally enlarge $K$ to contain $\overline{B_3(p)}$. Positivity away from $p$ and pole blow-up give a positive infimum of $G$ on $K\setminus\{p\}$. Choose $t_0$ below this infimum. Then $\mathcal O_G\subset M\setminus K$, and \eqref{eq:CY} on unit-scale balls gives the stated logarithmic-gradient bound.
\end{proof}

\section{Compactly supported Green identities}\label{sec:identities}
All test functions in this section are supported in a positive finite Green slab contained in the strongly stable region $\mathcal O_G=\{0<G<t_0\}$. No integrability at infinity is assumed.

\subsection{Normalized variables and critical sets}
On $\mathcal O_G\cap\{|\nabla G|>0\}$, put
\begin{equation}\label{eq:variables}
\begin{aligned}
 w&=\frac{|\nabla G|}{G},&
 \nu&=\frac{\nabla G}{|\nabla G|},&
 S&=\frac Bw,&b&=|S|,&h&=\frac Hw,\\
 \mathcal C&=\frac{\nabla B}{w^2},&
 Y&=\frac{\nabla a}{w^2},&
 U&=|S\nu|^2,&V&=\ip{S\nu}{\nu},\\
 Z&=\frac{\nabla^2G}{Gw^2}-\frac{n\nu\otimes\nu-g}{n-1},&&
 \mathrm d\mu=w^4G\,\mathrm dV_g.
\end{aligned}
\end{equation}
We also introduce the signed normalized ambient-curvature scalar
\begin{equation}\label{eq:kappa}
 \kappa=\frac{c}{w^2}.
\end{equation}
Thus $\kappa=0$ in the Euclidean case, $\kappa=w^{-2}$ in the unit sphere, and $\kappa=-w^{-2}$ in hyperbolic space. It is a scalar function, not a constant curvature bound or a tensor.

Here $w$ is a nonnegative scalar logarithmic-gradient norm; $S$ and $Z$ are dimensionless self-adjoint two-tensors; $b$ and $h$ are scalars; $\mathcal C$ is a covariant three-tensor; $Y$ and $\nu$ are tangent vectors; and $U,V$ are scalar contractions. In particular, $a=|B|$ is the norm of the trace-free second fundamental form, not $|A|$, and
\[
 b=\frac a w,\qquad h=\frac H w,\qquad
 \mathcal C_{ijk}=w^{-2}(\nabla_{e_i}B)(e_j,e_k).
\]
The unnormalized shifted Hessian associated with $Z$ is
\begin{equation}\label{eq:unnormalizedZ}
 \mathscr H_G=\nabla^2G-\frac1{(n-1)G}
       \bigl(n\,\mathrm dG\otimes\mathrm dG-|\nabla G|^2g\bigr),
 \qquad Z=\frac{\mathscr H_G}{Gw^2}.
\end{equation}
The symbol $Z$ in the Green identities denotes the dimensionless tensor in \eqref{eq:variables}; $\mathscr H_G$ denotes its unnormalized counterpart.

Both $S$ and $Z$ are trace-free. The vector $\nu$ is tangent to $M$ and distinct from the normal $N$ to the immersion. A colon, as in $S^2:Z$, denotes full tensor contraction. For $0\le\varphi\in C_c^\infty((0,t_0))$, set
\begin{equation}\label{eq:cutoffnotation}
 D=t\frac{\mathrm d}{\mathrm dt},\qquad
 \omega_\varphi(t)=\varphi(t)^4,\qquad
 \langle T\rangle_\varphi=\int_{\mathcal O_G} T\varphi(G)^4\dmu.
\end{equation}
Functions of $t$, including their $D$-derivatives, are always differentiated before composition with $G$. For example, $D\omega_\varphi$ under an integral means $(D\omega_\varphi)(G(x))$. The notation $\langle T\rangle_\varphi$ will be used only for a scalar density $T=T(x)$, and includes the cutoff weight; it is not a pointwise tensor inner product. If $s=\log(t_0/G)$ and $\Phi(s)=\varphi(t_0e^{-s})$, then
\[
 (D\varphi)(G)=-\Phi'(s),\qquad
 (D^2\varphi)(G)=\Phi''(s).
\]
The cutoff weight $\omega_\varphi$ is distinct from the Euclidean ball-volume constant $\omega_n$.

To interpret the normalized expressions at critical points, note that CMC graphs in the analytic space forms $X_c^{n+1}$ satisfy an analytic locally uniformly elliptic equation and are therefore analytic by \cite{Mor58}. In the resulting graph charts, the induced metric and the harmonic function $G$ are analytic. Since $G$ is nonconstant, its critical set has volume zero. The norms $a$ and $w$ are locally Lipschitz, and their weak gradients are used. More precisely, choose regular values $0<a_0<a_1<t_0$ outside the Green-value
support of the test; its preimage is contained in the interior of the compact
smooth slab $\{a_0\le G\le a_1\}$. All three stability tests below belong to
$W_0^{1,2}$ of this surrounding slab. The cutoff support itself need not have
smooth boundary.

Weighted expressions are first interpreted in the original variables. For example,
\begin{equation}\label{eq:unnormalized}
\begin{aligned}
 b^4\dmu&=a^4G\dV,& h^4\dmu&=H^4G\dV,\\
 |\mathcal C|^2\dmu&=|\nabla B|^2G\dV,& |Y|^2\dmu&=|\nabla a|^2G\dV,\\
 h^2b^2\dmu&=H^2a^2G\dV.
\end{aligned}
\end{equation}
The additional curvature densities have the unnormalized interpretations
\begin{equation}\label{eq:kappadensities}
\begin{aligned}
 \kappa\dmu&=c\frac{|\nabla G|^2}{G}\dV,\qquad
 \kappa b^2\dmu=c a^2G\dV,\qquad
 \kappa h^2\dmu=cH^2G\dV.
\end{aligned}
\end{equation}
These formulas define the weighted quantities across $w=0$, without assigning a separate boundary term to the critical set. Moreover,
\[
 w^2Z=\frac{\nabla^2G}{G}
       -\frac1{n-1}\bigl(n\,\mathrm d\log G\otimes\mathrm d\log G-w^2g\bigr)
\]
is smooth off the pole. Thus $|Z|^2\,\mathrm d\mu$ is locally integrable. Terms containing $Z\nu$
are bounded by $|Z|$. The mixed terms satisfy
\[
 (S^2:Z)\dmu=(B^2:\mathscr H_G)\dV,\qquad
 (hS:Z)\dmu=(HB:\mathscr H_G)\dV.
\] These formulas bound all remaining contractions on each
fixed slab, even though the normalized variables need not be bounded near
$\{w=0\}$. Green-gradient identities will be derived from the smooth function $w^2$, not from a formal distributional expression $w\Delta w$.

The term \emph{Green exponent two} refers to the measure in \eqref{eq:variables}. For a positive regular value $t$, set
\[
 J(t)=\int_{\{G=t\}}|\nabla G|^3\dAg.
\]
The coarea formula gives, for every compactly supported Green-value cutoff,
\begin{equation}\label{eq:GreenExponent}
 \int_{\mathcal O_G}\varphi(G)^4\dmu
 =\int_0^{t_0}\varphi(t)^4J(t)t^{-3}\dd t.
\end{equation}
Thus the weight $J(t)t^{-\sigma-1}\dd t$ has $\sigma=2$. This notation specifies the compact-test weight and asserts no uncut global moment is finite.

\subsection{The identities and exact cutoff errors}
Define
\begin{align}
 E_1&=|Z|^2-U+(n-2)hV+(n-1)(h^2+\kappa)+\frac1{n-1},\label{eq:E1}\\
 E_2&=S^2:Z+\frac n{n-1}U-\frac{b^2}{n-1},\label{eq:E2}\\
 E_3&=b^4-n(h^2+\kappa)b^2-nh\tr S^3-|\mathcal C|^2,\label{eq:E3}\\
 E_4&=|\mathcal C|^2-|Y|^2+2n(h^2+\kappa)b^2+nh\tr S^3-\tfrac14b^2,\label{eq:E4}\\
 E_5&=b^2+n(h^2+\kappa)-|Z\nu|^2-\tfrac14,\label{eq:E5}\\
 E_6&=Z(\nu,\nu),\label{eq:E6}\\
 E_7&=hS:Z+\frac n{n-1}hV,\label{eq:E7}\\
 E_8&=h^2\bigl(b^2+n(h^2+\kappa)-\tfrac14\bigr).\label{eq:E8}
\end{align}
\begin{proposition}\label{prop:identities}
With this notation,
\begin{equation}\label{eq:eightrelations}
 \langle E_i\rangle_\varphi=R_i\quad(i=1,2,3,6,7),\qquad
 \langle E_i\rangle_\varphi\le R_i\quad(i=4,5,8),
\end{equation}
where
\begin{align}
 R_1&=\tfrac12\int\bigl(D^2\omega_\varphi-\frac{n-3}{n-1}D\omega_\varphi\bigr)\dmu,\label{eq:R1}\\
 R_2&=\int(\tfrac12b^2-U)D\omega_\varphi\dmu,\label{eq:R2}\\
 R_3&=-\tfrac12\int b^2(D^2\omega_\varphi+D\omega_\varphi)\dmu,\label{eq:R3}\\
 R_4&=\int b^2\bigl(2\varphi^3D\varphi+4\varphi^2(D\varphi)^2\bigr)\dmu,\label{eq:R4}\\
 R_5&=-2\int\bigl(\varphi^2(D\varphi)^2+\varphi^3D^2\varphi\bigr)\dmu,\label{eq:R5}\\
 R_6&=-\tfrac12\int D\omega_\varphi\dmu,\label{eq:R6}\\
 R_7&=-\int hV D\omega_\varphi\dmu,\label{eq:R7}\\
 R_8&=\int h^2\bigl(2\varphi^3D\varphi+4\varphi^2(D\varphi)^2\bigr)\dmu.\label{eq:R8}
\end{align}
All integrals are over $\mathcal O_G$, and all scalar cutoff factors are composed with $G$. The $E_i=E_i(x)$ are pointwise scalar densities, whereas the $R_i=R_i[\varphi]$ are signed real numbers depending on the chosen cutoff.
\end{proposition}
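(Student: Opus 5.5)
The plan is to obtain each relation as a weighted divergence identity or a stability inequality with the test supported in a compact Green slab inside $\mathcal O_G$. Every weight will be a power of $G$ times $\varphi(G)^4$, chosen so that after division by $w^4G$ the integrand is exactly $E_i\,\mathrm d\mu$. I will always work with the unnormalized densities \eqref{eq:unnormalized}--\eqref{eq:kappadensities}, so the critical set $\{w=0\}$, which has measure zero by analyticity, needs no separate treatment. The basic rules are the chain rule $\nabla(\psi(G))=\psi'(G)\nabla G$ with $t\psi'(t)=D\psi$, harmonicity $\Delta G=0$ on the slab, and $|\nabla G|^2=w^2G^2$. Every cutoff derivative produces a factor $D\omega_\varphi$ or $D^2\omega_\varphi$ against $\mathrm d\mu$.

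\emph{Equalities $i=6,2,7$.} For $E_6$ I integrate $\divg\bigl(\psi(G)\nabla|\nabla G|^2\bigr)$ against a suitable weight $\psi(G)=G^{-k}\omega_\varphi(G)$. Expanding $\nabla^2G(\nabla G,\nabla G)$ through $Z$ leaves $Z(\nu,\nu)$ plus the explicit trace part of $\frac{n\nu\otimes\nu-g}{n-1}$. The power $k$ is fixed so that the pure $w^4G$ terms cancel, leaving only $-\tfrac12 D\omega_\varphi$.

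For $E_2$ and $E_7$ I use the divergence-free tensors of \eqref{eq:divergences}. For $T=\tfrac12a^2g-B^2$ and $T=HB$, the identity
\[
\divg\bigl(\psi(G)T(\nabla G)\bigr)=\psi\,T:\nabla^2G+\psi'\,T(\nabla G,\nabla G)
\]
holds on the slab. Writing $\nabla^2G=Gw^2\bigl(Z+\frac{n\nu\otimes\nu-g}{n-1}\bigr)$ and using $\tr B=0$, $\tr S^2=b^2$, $S^2(\nu,\nu)=U$, $S(\nu,\nu)=V$ should give $E_2$ and $E_7$, with remainders $R_2$ and $R_7$ coming from $\psi'$.

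\emph{Equality $i=3$.} I multiply the Simons identity \eqref{eq:simons} by $G\,\omega_\varphi(G)$ and integrate $\tfrac12\Delta a^2$ by parts twice onto the weight. Since $\Delta G=0$, one gets $\Delta(G\omega_\varphi(G))=|\nabla G|^2G^{-1}(D^2\omega_\varphi+D\omega_\varphi)(G)$. This yields $R_3$, and the remaining terms assemble exactly into $E_3$.

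\emph{Inequalities $i=4,5,8$.} These come from strong stability \eqref{eq:stability} on $\mathcal O_G$ with $W^{1,2}_0$ tests on the surrounding smooth slab.

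\begin{itemize}
\item For $i=4$, take $f=a\psi$ with $\psi=G^{1/2}\varphi(G)^2$. Then $\int|\nabla(a\psi)|^2=\int a^2|\nabla\psi|^2-\int\psi^2a\Delta a$. The Simons identity, in the form
\[
a\Delta a=|\nabla B|^2-|\nabla a|^2+n\gamma a^2-a^4+nH\tr B^3,
\]
turns stability into
\[
\int\psi^2\bigl(|\nabla B|^2-|\nabla a|^2+2n\gamma a^2+nH\tr B^3\bigr)\le\int a^2|\nabla\psi|^2 .
\]
Using $a^2|\nabla\psi|^2\,\mathrm dV=b^2\bigl(\tfrac14\varphi^4+2\varphi^3D\varphi+4\varphi^2(D\varphi)^2\bigr)\mathrm d\mu$ gives $E_4\le R_4$. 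The $a\Delta a$ manipulation is justified weakly, since $a$ is only Lipschitz.
\item For $i=8$, take $f=HG^{1/2}\varphi^2$. The same computation with $a$ replaced by the constant $H$ gives $E_8\le R_8$.
\item For $i=5$, take $f=|\nabla G|G^{-1/2}\varphi(G)^2$, so $f^2=w^2G\varphi^4$. Then $\nabla|\nabla G|=\nabla^2G(\nu)$ brings in $Z\nu$. The cross term $Z(\nu,\nu)$ is removed using the $E_6$ identity, and collecting the $\varphi$ derivatives should give $R_5$.
\end{itemize}

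\emph{Equality $i=1$.} I take the weighted Bochner formula
\[
\tfrac12\Delta|\nabla G|^2=|\nabla^2G|^2+\Ric(\nabla G,\nabla G),
\]
multiply by $G^{-k}\omega_\varphi(G)$ and integrate by parts. Inserting \eqref{eq:Ric} in the form
\[
\Ric(\nu,\nu)=(n-1)\gamma+(n-2)HB(\nu,\nu)-|B\nu|^2
\]
produces $(n-1)(h^2+\kappa)+(n-2)hV-U$. Expanding $|\nabla^2G|^2$ around the model tensor $\frac{n\nu\otimes\nu-g}{n-1}$ gives $|Z|^2$, a cross term $Z(\nu,\nu)$ (again eliminated through $E_6$), and the constant $\frac1{n-1}$.

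I expect this last step to be the main obstacle. The difficulty is choosing the weight exponent and tracking the cross terms so that all residual constants combine into exactly $\frac1{n-1}$ and the cutoff error is exactly $\tfrac12\bigl(D^2\omega_\varphi-\frac{n-3}{n-1}D\omega_\varphi\bigr)$. I would first carry out this computation on the model radial Green function, where $Z=0$, to calibrate the constants. I would also run it from the smooth function $|\nabla G|^2$ rather than from $w\Delta w$, as the paper insists.
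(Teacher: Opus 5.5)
Your plan follows the paper's proof. It uses the same divergence identities for $E_2,E_3,E_6,E_7$, the same stability tests $a\sqrt G\,\varphi^2$, $w\sqrt G\,\varphi^2$ and $H\sqrt G\,\varphi^2$ for $E_4,E_5,E_8$, and a Bochner identity for $E_1$ with $Z(\nu,\nu)$ eliminated through $E_6$. Two points need fixing.

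First, the vector field you name for $E_6$, $\psi(G)\nabla|\nabla G|^2$, is the wrong one as written. Its divergence is $2\psi'(G)\nabla^2G(\nabla G,\nabla G)+\psi(G)\Delta|\nabla G|^2$, which contains the whole Bochner term. Your actual computation is different: you expand $\nabla^2G(\nabla G,\nabla G)=G^3w^4(Z(\nu,\nu)+1)$ and choose $k$ so that the pure terms cancel. That computation belongs to the field $G^{-2}\eta(G)|\nabla G|^2\nabla G=w^2\eta(G)\nabla G$, so $k=2$, and it gives $\divg(w^2\eta(G)\nabla G)=Gw^4(2\eta E_6+D\eta)$. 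State this for an arbitrary smooth slab-supported $\eta$, because the $E_5$ step needs it both with $\eta=\omega_\varphi$ and with $\eta=D\omega_\varphi$.

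Second, your Bochner route for $E_1$ works with $k=1$, but the constant produced by $|\nabla^2G|^2$ is $\frac n{n-1}$, not $\frac1{n-1}$. Precisely, $\tfrac12\Delta|\nabla G|^2=G^2w^4\bigl(E_1+1+\tfrac{2n}{n-1}E_6\bigr)$, and $\Delta\bigl(G^{-1}\omega_\varphi(G)\bigr)=G^{-1}w^2(2\omega_\varphi-3D\omega_\varphi+D^2\omega_\varphi)$. After two integrations by parts the extra $1$ cancels against $\int\omega_\varphi\dmu$. Substituting $\langle E_6\rangle_\varphi=R_6$ with coefficient $\frac{2n}{n-1}$ then gives exactly $R_1$.

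The paper instead applies Bochner to $u=\log G$. Since $\nabla^2u=w^2\bigl(Z+\frac{\nu\otimes\nu-g}{n-1}\bigr)$ and $\Delta u=-w^2$, it obtains $\tfrac12\Delta w^2=w^4\bigl(E_1-\tfrac{2(n-2)}{n-1}E_6\bigr)$ directly. It then weights by $G\omega_\varphi(G)$, using $\Delta(G\omega_\varphi(G))=Gw^2(D^2\omega_\varphi+D\omega_\varphi)$. The two versions are the same identity up to the product rule. The logarithmic normalization avoids the constant cancellation and also supplies $\nabla w=w^2Z\nu$ for the $E_5$ test.
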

\begin{proof}
We derive all eight relations with their cutoff terms. Ambient curvature enters only the interior densities $E_1,E_3,E_4,E_5,E_8$; the cutoff errors $R_i$ are independent of $c$ except through the geometric variables.

\emph{Bochner and the Green-gradient divergence.} Set $u=\log G$, so $\Delta u=-w^2$. On the regular set,
\begin{equation}\label{eq:gradw}
 \nabla w=w^2Z\nu.
\end{equation}
Indeed, $\nabla^2u=w^2[Z+(\nu\otimes\nu-g)/(n-1)]$, and contraction with $\nu$ cancels the last term. Since $\tr Z=0$,
\[
 |\nabla^2u|^2=w^4\left(|Z|^2+\frac2{n-1}Z(\nu,\nu)+\frac1{n-1}\right),
 \qquad \ip{\nabla u}{\nabla\Delta u}=-2w^4Z(\nu,\nu).
\]
Bochner's formula and \eqref{eq:Ric} therefore give
\begin{equation}\label{eq:bochner}
 \tfrac12\Delta w^2=w^4(E_1-\frac{2(n-2)}{n-1}E_6).
\end{equation}
For every smooth Green-value function $\eta$ supported in a slab,
\begin{equation}\label{eq:graddiv}
 \divg\bigl(w^2\eta(G)\nabla G\bigr)=Gw^4(2\eta E_6+D\eta).
\end{equation}
Integrating with $\eta=\omega_\varphi$ gives $\langle E_6\rangle_\varphi=R_6$. On the other hand,
\begin{equation}\label{eq:chain}
 \Delta(G\omega_\varphi(G))=Gw^2(D^2\omega_\varphi+D\omega_\varphi).
\end{equation}
Multiply \eqref{eq:bochner} by $G\omega_\varphi$ and integrate twice by parts to get
\[
 \langle E_1\rangle_\varphi-\frac{2(n-2)}{n-1}\langle E_6\rangle_\varphi
 =\tfrac12\int(D^2\omega_\varphi+D\omega_\varphi)\dmu.
\]
Substituting $R_6$ gives $R_1$.

\emph{The quadratic stress tensor.} Set $T=a^2g/2-B^2$. By \eqref{eq:divergences},
\[
 0=\int\divg\bigl(\omega_\varphi T(\nabla G,\cdot)^\sharp\bigr)\dV.
\]
Harmonicity of $G$ and \eqref{eq:variables} give
\[
 T:\nabla^2G=-Gw^4\left(S^2:Z+\frac n{n-1}U-\frac1{n-1}b^2\right)=-Gw^4E_2.
\]
The remaining product-rule term is
\[
 \omega_\varphi'(G)T(\nabla G,\nabla G)=Gw^4(\tfrac12b^2-U)D\omega_\varphi,
\]
proving $\langle E_2\rangle_\varphi=R_2$.

\emph{Simons' identity.} Equation~\eqref{eq:simons} is $\tfrac12\Delta a^2=-w^4E_3$. Multiply by $G\omega_\varphi$ and use \eqref{eq:chain}, giving $\langle E_3\rangle_\varphi=R_3$.

\emph{The curvature stability test.} We use the Simons--stability mechanism of \cite{SSY75}, keeping all CMC and ambient-curvature terms. For compactly supported Lipschitz $f$, expand $Q_c(af)$ and integrate the gradient of $a^2$ by parts:
\[
 Q_c(af)=\int a^2|\nabla f|^2\dV
 +\int f^2\left(|\nabla a|^2-\tfrac12\Delta a^2-(a^2+n(c+H^2))a^2\right)\dV.
\]
Using \eqref{eq:simons}, this becomes
\begin{equation}\label{eq:curvtest}
 Q_c(af)=\int a^2|\nabla f|^2\dV
 -\int f^2\bigl(|\nabla B|^2-|\nabla a|^2
                 +2n(c+H^2)a^2+nH\tr B^3\bigr)\dV.
\end{equation}
In particular, the term is $2n(c+H^2)a^2$, retaining both the Simons and stability contributions. Set $f=\sqrt G\,\varphi^2$. Then
\begin{equation}\label{eq:gradtest}
 |\nabla f|^2=Gw^2\left(\tfrac14\omega_\varphi
                 +2\varphi^3D\varphi+4\varphi^2(D\varphi)^2\right).
\end{equation}
The inequality $Q_c(af)\ge0$ is exactly $\langle E_4\rangle_\varphi\le R_4$.

\emph{The Green-gradient stability test.} Equation~\eqref{eq:gradw} implies
\[
 \nabla(w\sqrt G)=\sqrt G\,w^2(Z\nu+\nu/2).
\]
Expanding $Q_c(w\sqrt G\,\varphi^2)\ge0$ gives
\[
 \langle E_5\rangle_\varphi
 \le\int\bigl(\omega_\varphi E_6+(D\omega_\varphi)E_6
                +2\varphi^3D\varphi+4\varphi^2(D\varphi)^2\bigr)\dmu.
\]
Apply \eqref{eq:graddiv} with $\eta=\omega_\varphi$ and then $\eta=D\omega_\varphi$. Since
\begin{equation}\label{eq:Domega}
 D\omega_\varphi=4\varphi^3D\varphi,\qquad
 D^2\omega_\varphi=12\varphi^2(D\varphi)^2+4\varphi^3D^2\varphi,
\end{equation}
the right-hand side reduces to $R_5$.

\emph{The linear CMC divergence.} Because $\divg(HB)=0$ and $\tr B=0$,
\[
 HB:\nabla^2G=Gw^4\left(hS:Z+\frac n{n-1}hV\right)=Gw^4E_7.
\]
Integration of $\divg(\omega_\varphi HB(\nabla G,\cdot)^\sharp)$ yields
\[
 0=\langle E_7\rangle_\varphi+\int hV D\omega_\varphi\dmu,
\]
which gives $R_7$.

\emph{The constant-mean-curvature stability test.} Finally apply stability to $H\sqrt G\,\varphi^2$. Multiplication of \eqref{eq:gradtest} by $H^2$ gives $\langle E_8\rangle_\varphi\le R_8$.

Every integration above concerns smooth unnormalized tensors or a compactly supported $W^{1,2}$ stability test. Approximation on a fixed slab justifies the norm calculations at $a=0$ and $w=0$, as explained after \eqref{eq:unnormalized}. No limit at either end of the Green range has been taken.
\end{proof}

\section{Dimension-dependent coercivity}\label{sec:algebra}
Throughout this section $2\le n\le6$. All tensor estimates are pointwise in an
$n$-dimensional inner-product space. 

\subsection{Spectral and Codazzi estimates}
\begin{lemma}[Spectral estimates]\label{lem:spectral}
Let $S$ be symmetric and trace-free, set $b=|S|$ and $D_0=S^2-b^2g/n$, and let
$\nu$ be a unit vector. Then
\begin{align}
 \|S\|_{\op}^2&\le\frac{n-1}{n}b^2,
 &\tr S^4&\le\frac{n^2-3n+3}{n(n-1)}b^4,\label{eq:spectral1}\\
 |D_0|^2&\le d_n b^4,
 &\|D_0\|_{\op}&\le e_n b^2,\label{eq:spectral2}
\end{align}
where
\[
 d_n=\frac{(n-2)^2}{n(n-1)},\qquad e_n=\frac{n-2}{n}.
\]
Consequently
\begin{equation}\label{eq:spectral3}
 |\tr S^3|\le\sqrt{d_n}\,b^3,\qquad
 |\langle D_0\nu,S\nu\rangle|
 \le e_n\sqrt{\frac{n-1}{n}}\,b^3.
\end{equation}
For $n=2$ one has the exact identities $S^2=b^2g/2$, $D_0=0$ and $\tr S^3=0$.
\end{lemma}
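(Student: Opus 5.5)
The plan is to diagonalize. Since $S$ is symmetric, choose an orthonormal eigenbasis with eigenvalues $\lambda_1,\dots,\lambda_n$, so that $\sum\lambda_i=0$ and $\sum\lambda_i^2=b^2$. Every quantity in \eqref{eq:spectral1}--\eqref{eq:spectral2} is then a symmetric function of the $\lambda_i$, and each bound becomes an extremal problem on the sphere $\{\sum\lambda_i=0,\ \sum\lambda_i^2=b^2\}$. By homogeneity we may take $b=1$.

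\emph{Operator norm.} Fix an index $i$. The remaining $n-1$ eigenvalues sum to $-\lambda_i$, so Cauchy--Schwarz gives $\sum_{j\ne i}\lambda_j^2\ge\lambda_i^2/(n-1)$. Adding $\lambda_i^2$ yields $b^2\ge\frac{n}{n-1}\lambda_i^2$, which is the first bound in \eqref{eq:spectral1}.

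\emph{The quartic bound.} The key step is Okumura's lemma, which says $|\tr S^3|\le\frac{n-2}{\sqrt{n(n-1)}}b^3$, i.e.\ $\sqrt{d_n}\,b^3$. I would prove it by Lagrange multipliers: at a critical point of $\sum\lambda_i^3$ under the two constraints, each $\lambda_i$ satisfies a common quadratic equation. Hence the $\lambda_i$ take at most two values. Among configurations with $k$ entries equal to $\alpha$ and $n-k$ equal to $\beta$, the extremum occurs at $k=1$, giving the stated constant.

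For $\tr S^4$, Lagrange multipliers give a cubic equation, so the $\lambda_i$ take at most three values. A cleaner route is to note that $\tr S^4=|S^2|^2=|D_0|^2+b^4/n$. It therefore suffices to bound $|D_0|^2$, and $\tr S^4$ is maximized when the eigenvalue vector is $(n-1,-1,\dots,-1)$ times a scalar. This is the same extremal configuration as for the operator norm.

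I would prove this claim as follows. Write $x_i=\lambda_i^2$, so that $\sum x_i=1$ and $\tr S^4=\sum x_i^2\le(\max_i x_i)\sum x_i$. That alone is not sharp, so instead I would use the standard fact (Lagrange/majorization) that on the slice $\sum\lambda=0$, $\sum\lambda^2=1$, the maximum of the Schur-convex quantity $\sum\lambda_i^4$ is attained at the vector with one distinct entry. Evaluating there with $\lambda_1=\sqrt{(n-1)/n}$ and $\lambda_j=-1/\sqrt{n(n-1)}$ gives
\[
\frac{(n-1)^2}{n^2}+\frac{n-1}{n^2(n-1)^2}=\frac{(n-1)^3+1}{n^2(n-1)}=\frac{n^2-3n+3}{n(n-1)}.
\]
I expect justifying this maximality rigorously to be the main obstacle. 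I would do it by the three-value Lagrange analysis, checking the finitely many multiplicity patterns.

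\emph{The bounds on $D_0$.} We have $|D_0|^2=\tr S^4-b^4/n$, and subtracting $1/n$ from the quartic bound gives exactly $d_n$. For the operator norm, the eigenvalues of $D_0$ are $\lambda_i^2-1/n$. The upper bound $\lambda_i^2-1/n\le\frac{n-1}{n}-\frac1n=e_n$ follows from the operator-norm estimate. The lower bound is $-1/n\ge-e_n$ when $n\ge3$; when $n=2$, $D_0=0$ anyway.

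\emph{Consequences \eqref{eq:spectral3}.} The cubic bound is Okumura's lemma above. Alternatively, $\tr S^3=\langle D_0,S\rangle$ because $\tr S=0$, so Cauchy--Schwarz with $|D_0|\le\sqrt{d_n}\,b^2$ gives it directly. For the mixed term, $|\langle D_0\nu,S\nu\rangle|\le\|D_0\|_{\op}\,|S\nu|\le e_nb^2\cdot\|S\|_{\op}$, and the first bound in \eqref{eq:spectral1} finishes it.

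\emph{The case $n=2$.} Here the eigenvalues are $\pm b/\sqrt2$. This gives $S^2=b^2g/2$ directly, hence $D_0=0$ and $\tr S^3=0$.
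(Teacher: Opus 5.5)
Your arguments for $\|S\|_{\op}$, for $\|D_0\|_{\op}$, for the mixed term $\langle D_0\nu,S\nu\rangle$ and for $n=2$ are correct and coincide with the paper's. Your Okumura route to $|\tr S^3|$ is a valid independent alternative to the paper's Cauchy--Schwarz step.

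The gap is the quartic bound $\tr S^4\le\frac{n^2-3n+3}{n(n-1)}b^4$. You flag it as the main obstacle but never establish it, and $|D_0|^2\le d_nb^4$ (and your Cauchy--Schwarz route to $|\tr S^3|$) rests on it. Neither justification you offer works:
\begin{itemize}
\item Reducing to $|D_0|^2$ is circular, since you then derive $|D_0|^2$ from the quartic bound.
\item The Schur-convexity argument fails because the proposed maximizer $v=(n-1,-1,\dots,-1)/\sqrt{n(n-1)}$ does not majorize the slice $\{\sum\lambda_i=0,\ \sum\lambda_i^2=1\}$. Take $n=4$ and $y=(1,0,0,-1)/\sqrt2$. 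The sum of the two largest entries is $1/\sqrt2$ for $y$ but only $1/\sqrt3$ for $v$, and the largest entry of $-v$ is $1/\sqrt{12}<1/\sqrt2$. So neither $\pm v$ majorizes $y$, and Schur-convexity gives no comparison.
\end{itemize}

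The three-value Lagrange analysis you fall back on is the paper's route, but its one nontrivial step is missing: disposing of critical points with three distinct eigenvalues. You do not say how the patterns $k_1+k_2+k_3=n$ would be handled. Here is the missing step.
\begin{itemize}
\item Normalize $b=1$ and put $q=\sum\lambda_i^4$. The slice is compact, so the maximum is attained.
\item The constraint gradients $(1,\dots,1)$ and $2\lambda$ are independent there, so Lagrange multipliers give $4\lambda_i^3=2a\lambda_i+\mu$. Multiplying by $\lambda_i$ and summing gives $a=2q$. Hence every eigenvalue is a root of $x^3-qx-\mu/4$.
\item If three distinct values occur, they are exactly its three roots. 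Vieta (with $e_1=0$, $e_2=-q$) gives $r_1^2+r_2^2+r_3^2=2q$. Since each root occurs at least once in the spectrum, $2q\le\sum\lambda_i^2=1$, so $q\le\tfrac12$.
\item For $n\ge3$ this is harmless, because $\frac{n^2-3n+3}{n(n-1)}-\frac12=\frac{(n-2)(n-3)}{2n(n-1)}\ge0$.
\item With two values of multiplicities $k,n-k$, solving the constraints gives $q=\frac{n}{k(n-k)}-\frac3n$, which is maximal at $k\in\{1,n-1\}$ and equals the claimed constant.
\item A single value is impossible, and $n=2$ is direct.
\end{itemize}
With this step supplied, your proof is complete and follows the paper's.
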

\begin{proof}
For eigenvalues $\lambda_i$ with sum zero,
$\lambda_i^2=(\sum_{j\ne i}\lambda_j)^2\le(n-1)\sum_{j\ne i}\lambda_j^2$,
which proves the operator bound. If $b>0$, normalize $\sum\lambda_i^2=1$ and
maximize $q=\sum\lambda_i^4$. The two constraint gradients are independent.
Lagrange multipliers give $4\lambda_i^3-2a\lambda_i-\beta=0$; multiplication
by $\lambda_i$ and summation gives $a=2q$. There are at most three distinct
values. If there are three, they are the roots $r_1,r_2,r_3$ of
$x^3-qx-\beta/4$. Thus $q=(r_1^2+r_2^2+r_3^2)/2\le1/2$, since each root
occurs at least once in the normalized spectrum. If there are two values
with multiplicities $k,n-k$, solving the two constraints gives
\[
 q=\frac{n}{k(n-k)}-\frac3n
 \le\frac{n}{n-1}-\frac3n
 =\frac{n^2-3n+3}{n(n-1)}.
\]
For $n\ge3$ the latter bound is at least $1/2$; a single eigenvalue is
impossible. The case $n=2$ follows directly from the spectrum $(t,-t)$.
The case $b=0$ is immediate.

Now $|D_0|^2=\tr S^4-b^4/n\le d_n b^4$. For $n\ge3$, every eigenvalue
of $D_0$ lies between $-b^2/n$ and $(n-2)b^2/n$, proving its operator bound.
For $n=2$, use $D_0=0$, rather than this interval bound. Finally
$\tr S^3=D_0:S$, so Cauchy--Schwarz gives the first inequality in
\eqref{eq:spectral3}; the two operator estimates give the second.
\end{proof}

\begin{lemma}[Refined Codazzi--Kato inequality]\label{lem:kato}
If $B$ is a trace-free Codazzi tensor on an $n$-manifold, then
\begin{equation}\label{eq:kato}
 |\nabla B|^2\ge\frac{n+2}{n}|\nabla|B||^2
 \quad\text{almost everywhere}.
\end{equation}
\end{lemma}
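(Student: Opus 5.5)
The plan is the standard pointwise linear-algebra argument, applied at points where $a=|B|>0$. Near such points $a$ is smooth. The zero set of $a$ is handled separately: there the weak gradient of the locally Lipschitz function $a$ vanishes almost everywhere on $\{a=0\}$. This is the usual fact for Lipschitz (indeed $W^{1,1}_{\mathrm{loc}}$) functions on level sets, so \eqref{eq:kato} holds trivially there.

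At a point with $a>0$, differentiate $a^2=|B|^2$ to get $a\nabla_ka=\langle\nabla_kB,B\rangle$. Hence
\[
 |\nabla a|^2=\sum_k\langle\nabla_kB,\hat B\rangle^2,\qquad \hat B=B/a .
\]
Fix a unit vector $\xi$, and in the end take $\xi=\nabla a/|\nabla a|$. We will show
\[
 |\nabla B|^2\ge\frac{n+2}{n}\sup_{|\xi|=1}\bigl\langle\nabla_\xi B,\hat B\bigr\rangle^2 .
\]
For this, regard $T=\nabla B$ as a fully symmetric trace-free $3$-tensor. Full symmetry comes from Codazzi and the symmetry of $B$. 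Trace-freeness in every pair of indices comes from $\tr B=0$ and $\divg B=0$. Set $\eta_k=\langle T_{k\cdot\cdot},\hat B\rangle$, which is the vector $\nabla a$. Then $\langle T,\Xi\rangle=|\eta|^2$ for the $3$-tensor $\Xi=\eta\otimes\hat B$. Since $T$ is fully symmetric and trace-free, $\langle T,\Xi\rangle=\langle T,P\Xi\rangle$, where $P$ is the orthogonal projection onto fully symmetric trace-free tensors. Cauchy--Schwarz then gives
\[
 |\eta|^2\le|T|\,|P\Xi| .
\]

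The main obstacle is the computation of $|P\Xi|^2$. The target bound is
\[
 |P\Xi|^2\le\frac{n}{n+2}|\eta|^2,
\]
since this yields $|\eta|^4\le|T|^2\,\frac{n}{n+2}|\eta|^2$, which is exactly \eqref{eq:kato}. I would try to avoid the general projection and argue in a simpler way. Choose an orthonormal frame with $e_1=\nu:=\eta/|\eta|$. Split $|T|^2$ into the components $T_{1ij}$ and the components $T_{kij}$ with $k\ge2$, and use symmetry: for each $i\ge2$ the components $T_{i1j}=T_{1ij}$ reappear. This gives
\[
 |T|^2\ge\sum_{i,j}T_{1ij}^2+2\sum_{i\ge2}\sum_jT_{1ij}^2\ \ \text{(approximately)} .
\]
Next set $M=T_{1\cdot\cdot}$. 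It is a symmetric trace-free matrix with $\langle M,\hat B\rangle=|\eta|$ and $|\hat B|=1$. Write $m_{11}=M_{11}$. The trace conditions give
\[
 T_{kk1}\ \text{summed over }k\ge2\ =\ -m_{11},
 \qquad\text{and}\qquad
 \sum_k T_{kk j}=0\quad\text{for each }j .
\]
Combining these, I would optimize to get
\[
 |T|^2\ge|M|^2+2\sum_{i\ge2}|M_{i\cdot}|^2+\frac{m_{11}^2}{n-1}+\cdots .
\]
A Lagrange-type optimization over symmetric trace-free $M$ with $\langle M,\hat B\rangle$ fixed should then give the constant $(n+2)/n$. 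The extremal case should have $M$ proportional to $n\nu\otimes\nu-g$.

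This bookkeeping, namely keeping the off-diagonal contributions and the trace relation simultaneously, is the delicate step. I expect to cite the standard computation for trace-free Codazzi tensors (Hu--Simon style / Bando--Kasue--Nakajima) if a direct optimization becomes messy. The method is the decomposition of $T$ into its $\nu$-components together with the trace identities.
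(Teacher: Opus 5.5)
\textbf{There is a genuine gap.} Your reduction is the paper's. At points with $a>0$ you pair $T=\nabla B$ with $\Xi=\eta\otimes\widehat B$, replace $\Xi$ by its orthogonal projection $P\Xi$ onto fully symmetric trace-free tensors, and apply Cauchy--Schwarz. The paper's $L_{\widehat B}^*\eta$ is exactly $P\Xi$, and your treatment of $\{a=0\}$ also matches the paper. But this part is formal. The whole content of the lemma is the bound $|P\Xi|^2\le\frac n{n+2}|\eta|^2$, and you never prove it.

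The frame computation you offer instead stops at ``approximately'', ``$+\cdots$'' and ``should then give'', with a citation as fallback. Its displayed bookkeeping is also wrong. In $\sum_{k\ge2}|T_{k\cdot\cdot}|^2$ the entries $T_{k11}=M_{1k}$ occur once, not twice. So with $M=T_{1\cdot\cdot}$ the exact splitting is
\[
 |T|^2=|M|^2+\sum_{j\ge2}M_{1j}^2+2\sum_{i,j\ge2}M_{ij}^2+\sum_{i,j,k\ge2}T_{ijk}^2 .
\]
Your first display therefore amounts to $\sum_{i,j,k\ge2}T_{ijk}^2\ge\sum_{j\ge2}M_{1j}^2$, which fails for $n\ge3$. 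For $n=3$, take the fully symmetric trace-free tensor with $T_{112}=1$, $T_{222}=-\tfrac34$, $T_{233}=-\tfrac14$ (and permutations). It has $|T|^2=\tfrac{15}4$, while your right-hand side equals $4$. This is consistent with your frame choice: $e_1=\eta/|\eta|$ for $\widehat B=(e_1\otimes e_2+e_2\otimes e_1)/\sqrt2$. Adding $\frac{m_{11}^2}{n-1}$ on top then counts the block $\sum_{i,j\ge2}M_{ij}^2$ a second time. At the equality configuration $\widehat B\propto n\nu\otimes\nu-g$, $T\propto P(\nu\otimes\widehat B)$, one has $M_{1j}=0$ for $j\ge2$ and $T_{ijk}=0$ for $i,j,k\ge2$. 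So your first display is an equality there, and your second one is false.

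\textbf{How to close it along the paper's line.} Write the projection explicitly:
\[
 (P\Xi)_{ijk}=\tfrac13\bigl(\eta_i\widehat B_{jk}+\eta_j\widehat B_{ik}+\eta_k\widehat B_{ij}\bigr)
 -\tfrac{2}{3(n+2)}\bigl(\delta_{ij}(\widehat B\eta)_k+\delta_{ik}(\widehat B\eta)_j+\delta_{jk}(\widehat B\eta)_i\bigr).
\]
Check that it is trace-free and that the $\delta$-term is orthogonal to trace-free tensors. Then compute $|P\Xi|^2=\langle\Xi,P\Xi\rangle=\tfrac13|\eta|^2+\tfrac{2n}{3(n+2)}|\widehat B\eta|^2$. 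The ingredient missing from your proposal is the bound $|\widehat B\eta|^2\le\frac{n-1}{n}|\eta|^2$ for a unit trace-free symmetric $\widehat B$ (Lemma~\ref{lem:spectral}). This is where trace-freeness of $B$ itself, not only of $\nabla B$, fixes the constant, since $\tfrac13+\tfrac{2(n-1)}{3(n+2)}=\tfrac n{n+2}$.

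\textbf{Completing your frame route instead.} This also works with correct bookkeeping.
\begin{enumerate}
\item The trace identities $\sum_{k\ge2}T_{kkj}=-M_{1j}$ for $j\ge2$ give $\sum_{i,j,k\ge2}T_{ijk}^2\ge\frac3{n+1}\sum_{j\ge2}M_{1j}^2$. In this scheme this is needed for $n=2,3$.
\item Write $M_{11}=z$, $M_{1j}=v_j$, $M_{ij}=W_{ij}-\frac z{n-1}\delta_{ij}$ with $\tr W=0$. This yields $|T|^2\ge\frac{n+2}{n-1}z^2+\frac{3(n+2)}{n+1}|v|^2+3|W|^2$.
\item Decompose $\widehat B$ the same way, with $\widehat V$ the trace-free part of its block $(\widehat B_{ij})_{i,j\ge2}$, so that $\frac n{n-1}\widehat B_{11}^2+2\sum_{j\ge2}\widehat B_{1j}^2+|\widehat V|^2=1$.
\item Apply weighted Cauchy--Schwarz to $\langle M,\widehat B\rangle=|\eta|$. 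This gives $|\eta|^2\le\max\bigl\{\frac n{n+2},\frac{2(n+1)}{3(n+2)},\frac13\bigr\}|T|^2=\frac n{n+2}|T|^2$ for $n\ge2$.
\end{enumerate}
The decisive first ratio is the same spectral fact, $\widehat B_{11}^2\le\frac{n-1}{n}$, in another form.
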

\begin{proof}
At $a=|B|>0$ set $\widehat B=B/a$. On fully symmetric trace-free
three-tensors define $(L_{\widehat B}T)_i=\sum_{j,k}\widehat B_{jk}T_{ijk}$.
Its adjoint on that tensor space is
\begin{equation}\label{eq:adjoint}
\begin{aligned}
 (L_{\widehat B}^*v)_{ijk}
 &=\tfrac13(v_i\widehat B_{jk}+v_j\widehat B_{ik}+v_k\widehat B_{ij})\\
 &\quad-\frac{2}{3(n+2)}
 (\delta_{ij}(\widehat Bv)_k+\delta_{ik}(\widehat Bv)_j
                      +\delta_{jk}(\widehat Bv)_i).
\end{aligned}
\end{equation}
The trace of the first line is $2\widehat Bv/3$, while the trace of the
parenthesized tensor is $(n+2)\widehat Bv$. If $T_0$ denotes the first line
and $K_0$ the parenthesized tensor, direct contractions give
\[
 |T_0|^2=\tfrac13|v|^2+\tfrac23|\widehat Bv|^2,\quad
 |K_0|^2=3(n+2)|\widehat Bv|^2,\quad
 \langle T_0,K_0\rangle=2|\widehat Bv|^2.
\]
Therefore
\[
 |L_{\widehat B}^*v|^2
 =\tfrac13|v|^2+\frac{2n}{3(n+2)}|\widehat Bv|^2
 \le\frac n{n+2}|v|^2.
\]
Since $\nabla B$ is fully symmetric and trace-free and
$L_{\widehat B}(\nabla B)=\nabla a$, this proves \eqref{eq:kato} where
$a>0$. On $\{a=0\}$ its weak gradient vanishes almost everywhere, completing
the proof. This last fact is the level-set property of Sobolev weak gradients;
see \cite[Chapter~4]{EG15}.
\end{proof}

\subsection{A weighted Hessian minimum}
\begin{lemma}[Exact weighted Hessian minimum]\label{lem:variableHessian}
Let $n\ge2$, $0<\upsilon<\min\{2,n/(n-1)\}$, and set
\[
 \mathfrak D=n-(n-1)\upsilon,\qquad
 p_\upsilon=\frac{\upsilon}{2(2-\upsilon)},\qquad
 q_\upsilon=\frac{\upsilon^2(n-2)}{4\mathfrak D(2-\upsilon)}.
\]
For trace-free symmetric endomorphisms $L,Z$ and a unit vector $\nu$,
\begin{equation}\label{eq:variableHessian}
 |Z|^2-\upsilon|Z\nu|^2+L:Z
 \ge-\frac14|L|^2-p_\upsilon|L\nu|^2-q_\upsilon L(\nu,\nu)^2.
\end{equation}
The right-hand side is the exact minimum over $Z$ for fixed $L,\nu$.
\end{lemma}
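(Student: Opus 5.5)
The plan is a direct block decomposition adapted to $\nu$, followed by completing the square in each block. The form $Z\mapsto|Z|^2-\upsilon|Z\nu|^2$ turns out to be positive definite on trace-free symmetric endomorphisms. The infimum of a positive definite quadratic plus a linear term is attained, equals $-\tfrac14$ times the dual quadratic form of $L$, and this gives both the inequality and the claimed exactness.

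\emph{Step 1 (coordinates).} Choose an orthonormal basis with $e_1=\nu$. Write
\[
 Z=\begin{pmatrix} z & x^{T}\\ x & W\end{pmatrix},
 \qquad
 L=\begin{pmatrix} l & y^{T}\\ y & M\end{pmatrix},
\]
with $z=Z(\nu,\nu)$, $x\in\R^{n-1}$, and $W$ symmetric of size $n-1$ satisfying $\tr W=-z$; similarly $l=L(\nu,\nu)$ and $\tr M=-l$. Split off traces: $W=W_0-\frac{z}{n-1}I$ and $M=M_0-\frac{l}{n-1}I$, with $W_0,M_0$ trace-free. Then
\[
 |Z|^2=\tfrac{n}{n-1}z^2+2|x|^2+|W_0|^2,\qquad |Z\nu|^2=z^2+|x|^2,
\]
\[
 L:Z=\tfrac{n}{n-1}\,lz+2\,y\cdot x+M_0:W_0 .
\]
The same formulas hold for $|L|^2$ and $|L\nu|^2$.

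\emph{Step 2 (positivity and minimization).} The quadratic part becomes
\[
 \tfrac{\mathfrak D}{n-1}z^2+(2-\upsilon)|x|^2+|W_0|^2 .
\]
This is positive definite exactly under the hypothesis $0<\upsilon<\min\{2,n/(n-1)\}$, because $\mathfrak D>0$ and $2-\upsilon>0$. The three blocks decouple, so I minimize each separately using $\min_t(\alpha t^2+\beta t)=-\beta^2/(4\alpha)$ with $\alpha>0$. This gives the exact minimum
\[
 -\frac{n^2l^2}{4(n-1)\mathfrak D}-\frac{|y|^2}{2-\upsilon}-\frac14|M_0|^2 .
\]
It is attained at a unique trace-free $Z$, so exactness will follow once the right-hand side of \eqref{eq:variableHessian} is shown to equal this expression.

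\emph{Step 3 (matching coefficients).} Substitute
\[
 |L|^2=\tfrac{n}{n-1}l^2+2|y|^2+|M_0|^2,\qquad |L\nu|^2=l^2+|y|^2
\]
into $-\frac14|L|^2-p_\upsilon|L\nu|^2-q_\upsilon l^2$ and compare coefficients.

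For $|M_0|^2$, both sides give $-\tfrac14$.

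For $|y|^2$, the right-hand side gives $-\tfrac12-\frac{\upsilon}{2(2-\upsilon)}=-\frac1{2-\upsilon}$, which matches.

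For $l^2$, one must verify
\[
 \frac{n}{4(n-1)}+\frac{\upsilon}{2(2-\upsilon)}+\frac{\upsilon^2(n-2)}{4\mathfrak D(2-\upsilon)}
 =\frac{n^2}{4(n-1)\mathfrak D}.
\]
To do this, multiply through by $4(n-1)\mathfrak D(2-\upsilon)$ and use $\mathfrak D=n-(n-1)\upsilon$. Both sides then become polynomials in $\upsilon$ of degree at most two, and the identity reduces to comparing their coefficients. I expect this rational identity for the $l^2$ coefficient to be the only real obstacle. It is routine but must be checked carefully, since $q_\upsilon$ was evidently reverse-engineered from it.

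A consistency check is available at $\upsilon=0$: there $p_0=q_0=0$, and the minimum must be $-\tfrac14|L|^2$. This agrees with $\frac{n^2}{4(n-1)n}=\frac{n}{4(n-1)}$.
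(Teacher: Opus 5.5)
Your proposal is correct and follows the paper's proof essentially verbatim: the same block decomposition with $e_1=\nu$ and trace-free transverse parts, and the same three decoupled completed squares giving $-\frac{n^2l^2}{4(n-1)\mathfrak D}-\frac{|y|^2}{2-\upsilon}-\frac14|M_0|^2$, followed by the same substitution of $|L|^2$ and $|L\nu|^2$. The $l^2$ identity you left unchecked does hold: after clearing denominators the left side equals $\mathfrak D\bigl(2n+(n-2)\upsilon\bigr)+(n-1)(n-2)\upsilon^2$, and this equals $n^2(2-\upsilon)$.
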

\begin{proof}
Choose $\nu=e_1$ and write
\[
 Z=\begin{pmatrix}z&v^T\\v&W-zI_{n-1}/(n-1)\end{pmatrix},\qquad
 L=\begin{pmatrix}\ell_0&\eta_0^T\\\eta_0&V_0-\ell_0I_{n-1}/(n-1)\end{pmatrix},
 \qquad \tr W=\tr V_0=0.
\]
The expression to minimize is
\[
 \frac{\mathfrak D}{n-1}z^2+(2-\upsilon)|v|^2+|W|^2
 +\frac n{n-1}\ell_0z+2\eta_0\cdot v+V_0:W.
\]
All three quadratic coefficients are positive. Completing the three squares gives the minimum
\[
 -\frac{n^2\ell_0^2}{4(n-1)\mathfrak D}
 -\frac{|\eta_0|^2}{2-\upsilon}-\frac14|V_0|^2.
\]
Substitute
\[
 |L|^2=\frac n{n-1}\ell_0^2+2|\eta_0|^2+|V_0|^2,
 \qquad |L\nu|^2=\ell_0^2+|\eta_0|^2
\]
to obtain \eqref{eq:variableHessian}. The calculation also gives
\begin{equation}\label{eq:thetaVariable}
 p_\upsilon+q_\upsilon=\frac{n\upsilon}{4\mathfrak D}.
\end{equation}
When $n=2$, the transverse trace-free blocks $W,V_0$ vanish; the same calculation applies.
\end{proof}

\subsection{One coefficient choice for dimensions three through six}\label{subsec:coefficients}
For $3\le n\le6$, choose the coefficients in Table~\ref{tab:coefficients}. The thresholds are those of Theorem~\ref{thm:hyperbolic}.

\begin{table}[htbp]
\centering
\caption{Thresholds and the free coefficients of the common combination.}\label{tab:coefficients}
\begin{tabular}{ccccc}
\toprule
$n$&$\tau_n$&$\alpha$&$\chi$&$\ell$\\\midrule
3&$251/250$&$19/37$&$4/25$&$4/15$\\
4&$207/200$&$25/42$&$93/250$&$127/50$\\
5&$29/25$&$20/31$&$7/12$&$51/5$\\
6&$5/3$&$55/81$&$31/40$&$57/2$\\\bottomrule
\end{tabular}
\end{table}

The remaining coefficients are
\begin{align}
 \beta&=\frac{2\chi}{n+2},&
 \epsilon&=-(n-2)\alpha,&
 \upsilon&=\frac1n+\frac\chi4,\notag\\
 \mathfrak D&=n-(n-1)\upsilon,&
 \zeta&=\frac{2\mathfrak D}{n-1}-\frac{2\mathfrak D}{n\alpha},&
 \theta&=\frac{n\upsilon}{4\mathfrak D}.
 \label{eq:coefficients}
\end{align}
All these quantities are fixed by $n$; in particular $\zeta$ and $\epsilon$ are coefficients, not cutoffs or limiting parameters. Put
\begin{equation}\label{eq:P}
 P_n=E_1+\alpha E_2+\beta E_3+\chi E_4
          +\upsilon E_5+\zeta E_6+\epsilon E_7+\ell E_8.
\end{equation}
Each row has $\chi,\upsilon,\ell>0$ and
$0<\upsilon<\min\{2,n/(n-1)\}$. Hence all three one-sided relations in Proposition~\ref{prop:identities} have positive coefficients. Set
\begin{equation}\label{eq:Gamma}
 k_n=n-1+n\upsilon,\qquad j_n=n(2\chi-\beta),\qquad
 \Gamma_n(b,h)=k_n+Jb^2+n\ell h^2.
\end{equation}
Thus $k_n,j_n,\ell$ and $\Gamma_n$ are positive.

\begin{proposition}[A common pointwise lower bound]\label{prop:coercive}
For $3\le n\le6$, let $P_{n,0}$ be the algebraic specialization of \eqref{eq:P} at $\kappa=0$, with all other variables fixed. Set
\begin{equation}\label{eq:worstcase}
 \delta_n=\tau_n^{-1},\qquad
 \widehat P_n=P_{n,0}-\delta_nh^2\Gamma_n(b,h).
\end{equation}
For $h\ge0$ and the trace-free and Codazzi data of Section~\ref{sec:identities},
\begin{equation}\label{eq:worstbound}
 \widehat P_n\ge\frac1{10000}(1+b^4+h^4).
\end{equation}
Consequently,
\begin{equation}\label{eq:coercive}
 \begin{cases}
 P_n\ge\dfrac{1+b^4+h^4}{10000}+\kappa\Gamma_n(b,h),&c\in\{0,1\},\\[2mm]
 P_n\ge\dfrac{1+b^4+h^4}{10000},&c=-1,\quad H^2\ge\tau_n.
 \end{cases}
\end{equation}
\end{proposition}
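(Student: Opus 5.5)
The plan is to first isolate the dependence on $\kappa$, which reduces \eqref{eq:coercive} to \eqref{eq:worstbound}, and then prove \eqref{eq:worstbound} by eliminating $Z$, $\mathcal C$ and $Y$ pointwise and checking a finite-dimensional polynomial inequality.

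\emph{Step 1: the $\kappa$-dependence.} The ambient-curvature scalar $\kappa$ appears in exactly five densities:
\begin{itemize}
\item $(n-1)\kappa$ in $E_1$;
\item $-n\kappa b^2$ in $E_3$;
\item $2n\kappa b^2$ in $E_4$;
\item $n\kappa$ in $E_5$;
\item $n\kappa h^2$ in $E_8$.
\end{itemize}
Collecting these with the weights of \eqref{eq:P} gives the exact identity
\[
 P_n=P_{n,0}+\kappa\bigl((n-1+n\upsilon)+n(2\chi-\beta)b^2+n\ell h^2\bigr)=P_{n,0}+\kappa\Gamma_n(b,h),
\]
where $J=j_n$ in \eqref{eq:Gamma}. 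Both consequences in \eqref{eq:coercive} then follow from \eqref{eq:worstbound}.

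\emph{Case $c\in\{0,1\}$.} Here $\kappa\ge0$, and \eqref{eq:worstbound} together with $\delta_nh^2\Gamma_n\ge0$ gives $P_{n,0}\ge(1+b^4+h^4)/10000$. Adding $\kappa\Gamma_n$ to both sides gives the first case.

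\emph{Case $c=-1$.} Here $\kappa=-w^{-2}=-h^2/H^2$. The hypothesis $H^2\ge\tau_n$ gives $\kappa\ge-\delta_nh^2$, hence $\kappa\Gamma_n\ge-\delta_nh^2\Gamma_n$. Therefore $P_n\ge\widehat P_n$, and \eqref{eq:worstbound} gives the second case.

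So everything reduces to \eqref{eq:worstbound}, which is a statement at the least favorable hyperbolic parameter.

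\emph{Step 2: removing the derivative tensors.} Three reductions eliminate $\mathcal C$, $Y$ and $Z$.
\begin{itemize}
\item The derivative terms in $P_{n,0}$ are $(\chi-\beta)|\mathcal C|^2-\chi|Y|^2$. Since $\chi-\beta=\chi n/(n+2)$, Lemma~\ref{lem:kato} (normalized by $w^{-2}$) makes this nonnegative, so it can be discarded.
\item The $\tr S^3$ terms combine to $nh(\chi-\beta)\tr S^3$.
\item All $Z$-dependence has the form $|Z|^2-\upsilon|Z\nu|^2+L:Z$. Here $L$ is the trace-free symmetric tensor
\[
 L=\alpha D_0+\epsilon hS+\zeta\bigl(\nu\otimes\nu-g/n\bigr),
\]
which uses $Z(\nu,\nu)=Z:(\nu\otimes\nu)$ and $\tr Z=0$.
\end{itemize}
Lemma~\ref{lem:variableHessian} then replaces the $Z$-terms by $-\tfrac14|L|^2-p_\upsilon|L\nu|^2-q_\upsilon L(\nu,\nu)^2$. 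Expanding this gives:
\begin{itemize}
\item $|D_0|^2$, $|D_0\nu|^2$ and $D_0(\nu,\nu)$;
\item the mixed quantities $\langle D_0\nu,S\nu\rangle$ and $D_0:S=\tr S^3$;
\item $U=|S\nu|^2$ and $V$;
\item constants.
\end{itemize}
The choice $\theta=p_\upsilon+q_\upsilon$ from \eqref{eq:thetaVariable} and the definition of $\zeta$ are presumably tuned so that the pure constant and the $V$-linear pieces partially cancel against $E_1$, $E_2$ and $E_7$. I would verify this cancellation first, row by row.

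\emph{Step 3: reduction to finitely many variables.} After Step 2, $\widehat P_n$ is bounded below by a polynomial in $b$, $h$, $U$, $V$ and the $D_0$-invariants. I would bound every $D_0$-term and $\tr S^3$ by Lemma~\ref{lem:spectral}, always choosing the sign that is unfavourable for the lower bound. The constraints are then
\[
 V^2\le U\le\tfrac{n-1}{n}b^2 \quad\text{and}\quad |V|\le\sqrt{(n-1)/n}\,b .
\]
Finally I would minimize over $U$ and $V$. The dependence on these variables is at most quadratic, so this minimum occurs either at an interior critical point or on the boundary $U\in\{V^2,\tfrac{n-1}{n}b^2\}$. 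This leaves a two-variable polynomial inequality in $(b,h)$ with $h\ge0$, homogeneous of degree four up to lower-order terms.

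\emph{Main obstacle: certifying positivity.} I expect the hardest step to be showing that this polynomial dominates $(1+b^4+h^4)/10000$ for each of the four rows of Table~\ref{tab:coefficients}. The margins are tiny, which is exactly what fixes the thresholds $\tau_n$. I would proceed in three stages:
\begin{itemize}
\item Check the leading quartic form in $(b^2,h^2)$, including the $h^2\cdot\delta_n n\ell h^2$ and $h^2 j_nb^2$ penalties, as a positive-definite quadratic form in $(b^2,h^2)$.
\item Check the constant term.
\item Handle the cross terms $b^2$, $h^2$ and the odd term $hb^3$ by AM--GM, splitting them with explicit rational weights.
\end{itemize}
The exact rational arithmetic for each row would go to Appendix~\ref{app:coefficients}. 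If a direct splitting fails for some row, I would instead write the two-variable polynomial as a sum of squares with rational Gram matrices and verify the Gram matrices exactly.
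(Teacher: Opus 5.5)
Your Step~1 is exactly the paper's transfer argument. Step~2 also follows the paper: $\mathcal C,Y$ are removed by Kato matching, and $Z$ by Lemma~\ref{lem:variableHessian} with $L=L_0+\zeta\Pi$, where $L_0=\alpha D_0+\epsilon hS$ and $\Pi=\nu\otimes\nu-g/n$. The step that fails as written is Step~3. There you list $D_0(\nu,\nu)$ among the $D_0$-invariants to be bounded by Lemma~\ref{lem:spectral}, while keeping $U$ as an independent variable. The missing observation is the exact identity $D_0(\nu,\nu)=|S\nu|^2-b^2/n=U-b^2/n$. It turns the linear part of the $\zeta$-shift into
\[
 -\frac{n\zeta}{2\mathfrak D}L_0(\nu,\nu)
 =-\frac{n\zeta\alpha}{2\mathfrak D}\Bigl(U-\frac{b^2}{n}\Bigr)
 -\frac{n\zeta\epsilon}{2\mathfrak D}\,hV.
\]
Add the following terms:
\begin{itemize}
\item $-U+(n-2)hV$ from $E_1$;
\item $\frac{\alpha}{n-1}(nU-b^2)$ from $\alpha E_2$;
\item $-\frac{\chi}{4}b^2$ from $\chi E_4$;
\item $\upsilon b^2$ from $\upsilon E_5$;
\item $\frac{n\epsilon}{n-1}hV$ from $\epsilon E_7$.
\end{itemize}
Then the coefficients of $U$, $b^2$ and $hV$ vanish identically. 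These are \eqref{eq:cancelU}--\eqref{eq:cancelhV}. They are precisely what the definitions $\zeta=\frac{2\mathfrak D}{n-1}-\frac{2\mathfrak D}{n\alpha}$, $\upsilon=\frac1n+\frac{\chi}{4}$ and $\epsilon=-(n-2)\alpha$ encode. So the tuning is not a partial cancellation of the constant and $V$-terms. The constant is not cancelled at all; it survives as $c_0>0$. The $U$, $b^2$ and $hV$ terms cancel exactly.

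This matters quantitatively. Because $\zeta<0$, the coefficient of $D_0(\nu,\nu)$ is positive. The $h$-independent coefficient $-1+\frac{n\alpha}{n-1}$ of $U$ is negative in every row of Table~\ref{tab:coefficients}. Suppose you use the sharp bound $D_0(\nu,\nu)\ge-b^2/n$, which is at least as good as $\|D_0\|_{\op}\le e_nb^2$. Suppose you also take, independently, the worst case $U=\frac{n-1}{n}b^2$. These two extremes cannot occur together, since the first forces $U=0$. A spurious term $(\alpha-\frac{n-1}{n})b^2\approx-0.15\,b^2$ then remains in each row. Along the ray $h=\frac{c_{31}}{2c_{22}}b$, the available quartic coefficient is only of order $10^{-4}$ to $10^{-3}$ (compare $s_n$ in \eqref{eq:marginBounds}), and $c_0<1/3$. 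So the resulting lower polynomial takes negative values, and no AM--GM or sum-of-squares splitting can certify it.

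With the identity in place, the only remaining $(U,V)$-dependence is $-p_\upsilon\epsilon^2h^2U-q_\upsilon L_0(\nu,\nu)^2$, apart from the spectrally bounded $|D_0\nu|^2$ and $\langle D_0\nu,S\nu\rangle$. Its worst case is $U=V^2=\frac{n-1}{n}b^2$ with $V\le0$. This gives exactly the bound of the paper's cruder step $-p_\upsilon|L_0\nu|^2-q_\upsilon L_0(\nu,\nu)^2\ge-\theta|L_0\nu|^2$. So keeping $p_\upsilon$ and $q_\upsilon$ separate gains nothing, and you arrive at \eqref{eq:polynomial}, which has no $b^2$ term.

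Finally, $s_n/c_4$ is below one percent in every row. So the $c_{31}hb^3$ term must be paired with $b^4$ and $h^2b^2$ essentially through the optimal square $c_{22}b^2(h-\frac{c_{31}}{2c_{22}}b)^2$ of \eqref{eq:squares}. Positive definiteness of the quartic part in $(b^2,h^2)$ is not the relevant test.
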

\begin{proof}
The ambient-curvature terms in \eqref{eq:E1}--\eqref{eq:E8} give the exact identity
\begin{equation}\label{eq:curvatureCorrection}
 P_n-P_{n,0}=\kappa\Gamma_n(b,h).
\end{equation}
We first prove \eqref{eq:worstbound}, a pointwise algebraic estimate independent of the ambient curvature. Put
\[
 D_0=S^2-b^2g/n,\qquad
 L_0=\alpha D_0+\epsilon hS,\qquad
 \Pi=\nu\otimes\nu-g/n.
\]
Before minimizing over $Z$, the derivative term is
\begin{equation}\label{eq:KatoMatch}
 (\chi-\beta)|\mathcal C|^2-\chi|Y|^2
 =\chi\left(\frac n{n+2}|\mathcal C|^2-|Y|^2\right)\ge0
\end{equation}
by Lemma~\ref{lem:kato}. The Hessian terms are
$|Z|^2-\upsilon|Z\nu|^2+(L_0+\zeta\Pi):Z$.
In Lemma~\ref{lem:variableHessian}, the shift by $\zeta\Pi$ contributes
\begin{equation}\label{eq:shift}
 -\frac{\zeta^2(n-1)}{4\mathfrak D}
 -\frac{n\zeta}{2\mathfrak D}L_0(\nu,\nu)
\end{equation}
besides the negative quadratic terms in $L_0$. To verify the shift, use
$|\Pi|^2=(n-1)/n$, $\Pi\nu=(n-1)\nu/n$, $\tr L_0=0$, and \eqref{eq:thetaVariable}. Since $L_0(\nu,\nu)^2\le|L_0\nu|^2$, the remaining quadratic terms are bounded below by
\[
 -\frac14|L_0|^2-\theta|L_0\nu|^2.
\]
The coefficients of $U,b^2,hV$ cancel exactly:
\begin{align}
 -1+\frac{n\alpha}{n-1}-\frac{n\zeta\alpha}{2\mathfrak D}&=0,\label{eq:cancelU}\\
 \upsilon-\frac\chi4-\frac{\alpha}{n-1}
                       +\frac{\zeta\alpha}{2\mathfrak D}&=0,\label{eq:cancelb}\\
 (n-2)+\frac{n\epsilon}{n-1}
                       -\frac{n\zeta\epsilon}{2\mathfrak D}&=0.\label{eq:cancelhV}
\end{align}
These identities follow directly from \eqref{eq:coefficients}. In particular, none of these quadratic terms is estimated separately by Young's inequality.

Using $d_n,e_n$ from Lemma~\ref{lem:spectral}, define six scalar coefficients:
\begin{align}
 c_0&=\frac1{n-1}-\frac\upsilon4
            -\frac{\zeta^2(n-1)}{4\mathfrak D},&
 c_2&=\frac\ell4-(1-\delta_n)k_n,\notag\\
 c_4&=\beta-\alpha^2\left(\frac{d_n}4+\theta e_n^2\right),&
 c_{04}&=n\ell(1-\delta_n),\label{eq:polynomialCoefficients}\\
 c_{31}&=\left[n(\chi-\beta)-\frac{\alpha\epsilon}{2}\right]\sqrt{d_n}
       -2\theta\alpha\epsilon e_n\sqrt{\frac{n-1}{n}},\notag\\
 c_{22}&=\ell+(1-\delta_n)j_n
       -\epsilon^2\left(\frac14+\theta\frac{n-1}{n}\right).\notag
\end{align}
These are coefficients of a scalar polynomial; their subscripts distinguish them from the ambient curvature $c$ and from the tensors $A,B,\mathcal C$. For instance, $c_{31}$ multiplies $b^3h$ and $c_{22}$ multiplies $b^2h^2$.

To derive them, expand the two remaining negative norms:
\begin{align*}
 -\tfrac14|L_0|^2-\theta|L_0\nu|^2
 ={}&-\tfrac{\alpha^2}{4}|D_0|^2-\theta\alpha^2|D_0\nu|^2
       -\tfrac{\alpha\epsilon}{2}h\tr S^3
       -2\theta\alpha\epsilon h\langle D_0\nu,S\nu\rangle\\
 &-\tfrac{\epsilon^2}{4}h^2b^2-\theta\epsilon^2h^2U.
\end{align*}
The resulting cubic coefficients satisfy
$n(\chi-\beta)-\alpha\epsilon/2>0$ and $-2\theta\alpha\epsilon\ge0$.
Apply
\[
 |D_0|^2\le d_nb^4,\quad |D_0\nu|^2\le e_n^2b^4,\quad
 |\tr S^3|\le\sqrt{d_n}\,b^3,\quad U\le\frac{n-1}{n}b^2,
\]
and
\[
 |\langle D_0\nu,S\nu\rangle|
 \le e_n\sqrt{\frac{n-1}{n}}\,b^3.
\]
Using $h\ge0$, \eqref{eq:cancelU}--\eqref{eq:cancelhV}, and the subtraction in \eqref{eq:worstcase}, we obtain
\begin{equation}\label{eq:polynomial}
 \widehat P_n\ge
 c_0-c_2h^2+c_4b^4-c_{31}hb^3+c_{22}h^2b^2+c_{04}h^4.
\end{equation}

The positive scalar margins can be stated without implicit sign choices. Define
\begin{equation}\label{eq:discriminants}
 \Delta_n^{(h)}=4c_0c_{04}-c_2^2,\qquad
 \Delta_n^{(bh)}=4c_4c_{22}-c_{31}^2,
\end{equation}
and
\begin{equation}\label{eq:margins}
 \xi_n=\frac{\Delta_n^{(h)}}{8c_0},\qquad
 r_n=\frac{c_0\Delta_n^{(h)}}{4c_0c_{04}+c_2^2},\qquad
 s_n=\frac{\Delta_n^{(bh)}}{4c_{22}}.
\end{equation}
Lemma~\ref{lem:arithmetic} verifies $c_0,c_{22},c_{04}>0$ and the following strict rational comparisons by exact substitution:
\begin{equation}\label{eq:marginBounds}
\begin{array}{c|ccc}
 n&r_n>&s_n>&\xi_n>\\\hline
 3&43/1000&16/100000&39/100000\\
 4&4/1000&48/100000&75/10000\\
 5&7/10000&13/100000&44/1000\\
 6&1/10000&14/100000&1/10
\end{array}
\end{equation}
In particular $c_{04}-\xi_n=(c_{04}+c_2^2/(4c_0))/2>0$.
Completing the squares gives the identities
\begin{align}
 c_4b^4-c_{31}hb^3+c_{22}h^2b^2
 &=s_nb^4+c_{22}b^2\left(h-\frac{c_{31}}{2c_{22}}b\right)^2,\notag\\
 c_0-c_2h^2+c_{04}h^4
 &=r_n+\xi_nh^4+(c_{04}-\xi_n)
      \left(h^2-\frac{c_2}{2(c_{04}-\xi_n)}\right)^2.
 \label{eq:squares}
\end{align}
Together with \eqref{eq:marginBounds}, these prove \eqref{eq:worstbound}.

Finally, \eqref{eq:curvatureCorrection} and \eqref{eq:worstcase} give
\begin{equation}\label{eq:threeSpaceTransfer}
 P_n=\widehat P_n+(\kappa+\delta_nh^2)\Gamma_n(b,h).
\end{equation}
For $c\ge0$, the second term retains at least $\kappa\Gamma_n$.
For $c=-1$ with $H^2\ge\tau_n$, write $\delta=H^{-2}\le\delta_n$. Then
$\kappa+\delta_nh^2=(\delta_n-\delta)h^2\ge0$.
This proves both cases of \eqref{eq:coercive}, including the hyperbolic endpoint. The case $H=0$ for $c\ge0$ is obtained by setting $h=0$ in the same algebraic identities; no division by $H$ is used in that case.
\end{proof}

\subsection{The surface combinations}
\begin{proposition}[Coercivity in dimension two]\label{prop:surfaces}
Assume $n=2$ and $h\ge0$. For $c\in\{0,1\}$, set
\begin{equation}\label{eq:surfacePositive}
 P_2=E_1+\frac12E_5+4E_8.
\end{equation}
Then
\begin{equation}\label{eq:surfacePositiveBound}
 P_2\ge\frac78(1+h^4)+2\kappa.
\end{equation}
For $c=-1$ and $H^2>1$, set $\eta=1-H^{-2}>0$ and
\begin{equation}\label{eq:surfaceNegative}
 P_2=E_1+\frac12E_5+4\eta E_8.
\end{equation}
Then
\begin{equation}\label{eq:surfaceNegativeBound}
 P_2\ge\frac78+8\eta^2h^4
 \ge\varepsilon_{2,H}(1+h^4),\qquad
 \varepsilon_{2,H}=\min\{7/8,8\eta^2\}>0.
\end{equation}
\end{proposition}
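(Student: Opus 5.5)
The plan is a direct pointwise computation. In dimension two every tensor quantity in $E_1,E_5,E_8$ collapses to a scalar, so no Hessian minimization (Lemma~\ref{lem:variableHessian}) and no Kato input (Lemma~\ref{lem:kato}) should be needed. The combinations in \eqref{eq:surfacePositive} and \eqref{eq:surfaceNegative} omit $E_2,E_3,E_4,E_6,E_7$, so only the three densities \eqref{eq:E1}, \eqref{eq:E5}, \eqref{eq:E8} have to be simplified.

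\emph{Step 1: two-dimensional reductions.} By Lemma~\ref{lem:spectral} with $n=2$ we have $S^2=\tfrac12b^2g$, hence $U=|S\nu|^2=\tfrac12b^2$. In \eqref{eq:E1} the coefficient $(n-2)$ of $hV$ vanishes, so $V$ never appears. For a trace-free symmetric $2\times2$ matrix $Z=\begin{pmatrix}z&y\\y&-z\end{pmatrix}$ we get $|Z\nu|^2=z^2+y^2=\tfrac12|Z|^2$ for every unit $\nu$. Hence $|Z|^2-\tfrac12|Z\nu|^2=\tfrac34|Z|^2\ge0$, which is the only place $Z$ enters, and that term can be discarded.

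\emph{Step 2: the sum for $c\in\{0,1\}$.} Substituting the reductions of Step 1,
\[
 E_1+\tfrac12E_5=\tfrac34|Z|^2+\tfrac78+2(h^2+\kappa),
\]
and the $b^2$ terms cancel, since $-U+\tfrac12b^2=0$. Also
\[
 4E_8=4h^2b^2+8h^4+8\kappa h^2-h^2.
\]
Adding these gives $P_2\ge\tfrac78+h^2+8h^4+4h^2b^2+2\kappa+8\kappa h^2$. Since $\kappa\ge0$ when $c\ge0$, we may drop $h^2$, $4h^2b^2$ and $8\kappa h^2$, and use $8h^4\ge\tfrac78h^4$, which yields \eqref{eq:surfacePositiveBound}. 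At critical points of $G$ the inequality is read in the unnormalized densities \eqref{eq:unnormalized}--\eqref{eq:kappadensities}, as for every other density; no new issue arises there.

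\emph{Step 3: the hyperbolic case.} This is the step that requires care, because $\kappa<0$ must be absorbed before anything is integrated. The key observation is that $\kappa=-w^{-2}=-h^2/H^2$, so that
\[
 h^2+\kappa=(1-H^{-2})h^2=\eta h^2\ge0
\]
exactly. With weight $4\eta$ on $E_8$ the same computation as in Step 2 gives
\[
 P_2=\tfrac34|Z|^2+\tfrac78+2\eta h^2+4\eta h^2\bigl(b^2+2\eta h^2\bigr)-\eta h^2,
\]
which is at least $\tfrac78+\eta h^2+8\eta^2h^4\ge\tfrac78+8\eta^2h^4$. The final inequality in \eqref{eq:surfaceNegativeBound} then follows from $\tfrac78+8\eta^2h^4\ge\min\{7/8,8\eta^2\}(1+h^4)$.

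The only real obstacle is the sign bookkeeping. One must check that the $-\tfrac14$ terms of $E_5$ and $E_8$ are dominated: in the first case by the constant $1$ from $E_1$ and the surplus $h^2$, and in the second by $2\eta h^2$. One must also verify that the identity $h^2+\kappa=\eta h^2$ is used pointwise, before integration, so that the signed $\kappa$ terms never have to be treated as a positive measure.
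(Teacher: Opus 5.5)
Your proposal is correct and follows essentially the same route as the paper: both substitute $S^2=b^2g/2$ and $U=b^2/2$, use the vanishing of the $(n-2)hV$ term, use the pointwise identity $h^2+\kappa=\eta h^2$ in the hyperbolic case, and discard the same nonnegative terms. The only difference is cosmetic: you use the exact two-dimensional relation $|Z\nu|^2=\tfrac12|Z|^2$, whereas the paper uses the cruder bound $|Z\nu|\le|Z|$; both serve only to show that the $Z$-terms are nonnegative.
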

\begin{proof}
For a trace-free symmetric two-tensor in dimension two,
$S^2=b^2g/2$ and $U=b^2/2$. Direct substitution gives, for $c\ge0$,
\begin{align*}
 P_2={}&|Z|^2-\tfrac12|Z\nu|^2+\tfrac78+h^2+2\kappa
              +4h^2b^2+8h^4+8\kappa h^2.
\end{align*}
Since $|Z\nu|\le|Z|$ and $\kappa\ge0$, discarding the other nonnegative terms and using $8\ge7/8$ proves \eqref{eq:surfacePositiveBound}.
For $c=-1$, use $h^2+\kappa=\eta h^2$ to obtain
\[
 P_2=|Z|^2-\tfrac12|Z\nu|^2+\tfrac78
              +\eta h^2+4\eta h^2b^2+8\eta^2h^4.
\]
This proves \eqref{eq:surfaceNegativeBound}. These combinations control $1+h^4$, not $b^4$. Their cutoff errors involve only $R_1,R_5,R_8$, so that weaker control is sufficient in Section~\ref{sec:cutoff}.
\end{proof}

\section{Logarithmic cutoffs and proofs of the main results}\label{sec:cutoff}
We continue under the contradiction assumptions introduced at the beginning of Section~\ref{sec:green}. Define
\begin{equation}\label{eq:cutoffDensity}
 \mathcal L_n=\begin{cases}1+h^4,&n=2,\\1+b^4+h^4,&3\le n\le6,\end{cases}
 \qquad
 \varepsilon_*=
 \begin{cases}
 7/8,&n=2,\ c\ge0,\\
 \min\{7/8,8(1-H^{-2})^2\},&n=2,\ c=-1,\\
 1/10000,&3\le n\le6.
 \end{cases}
\end{equation}
The number $\varepsilon_*$ is positive under the relevant hypotheses and independent of the cutoff. The retained ambient density is
\begin{equation}\label{eq:ambientDensity}
 \mathcal K_n=\begin{cases}
 2\kappa,&n=2,\ c\ge0,\\
 \kappa\Gamma_n(b,h),&3\le n\le6,\ c\ge0,\\
 0,&c=-1.
 \end{cases}
\end{equation}
Thus $\mathcal K_n\ge0$. Its definition in the hyperbolic case is used only after the negative ambient terms have been absorbed in Proposition~\ref{prop:coercive} or~\ref{prop:surfaces}.

\begin{proposition}[A common compact-test estimate]\label{prop:cutoff}
There is a constant $C_{n,H}<\infty$, independent of the cutoff, such that every nonnegative $\varphi\in C_c^\infty((0,t_0))$ satisfies
\begin{equation}\label{eq:cutoff}
 \int_{\mathcal O_G}
 \left(\frac{\varepsilon_*}{2}\mathcal L_n+\mathcal K_n\right)
 \varphi(G)^4\dmu
 \le C_{n,H}\int_{\mathcal O_G}
 \bigl(|D\varphi|^4+|D^2\varphi|^4\bigr)\dmu.
\end{equation}
\end{proposition}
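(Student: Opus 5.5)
The plan is to sum the eight relations of Proposition~\ref{prop:identities} with the coefficients of $P_n$, use the pointwise lower bound to control the left side, and then absorb each cutoff error $R_i$ by Young's inequality.

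\emph{Step 1: the combined relation.} For $3\le n\le6$ take $P_n$ from \eqref{eq:P}; for $n=2$ take \eqref{eq:surfacePositive} or \eqref{eq:surfaceNegative}. The one-sided relations ($i=4,5,8$) carry the positive coefficients $\chi,\upsilon,\ell$ (respectively $\tfrac12$ and $4$ or $4\eta$). Every other relation is an equality, so its sign does not matter. Summing therefore gives
\[
 \langle P_n\rangle_\varphi\le \mathcal R:=R_1+\alpha R_2+\beta R_3+\chi R_4+\upsilon R_5+\zeta R_6+\epsilon R_7+\ell R_8 .
\]
By Proposition~\ref{prop:coercive} or~\ref{prop:surfaces}, $P_n\ge\varepsilon_*\mathcal L_n+\mathcal K_n$ pointwise on the regular set. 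In the hyperbolic case the negative $\kappa$ terms have already been absorbed there. The critical set has measure zero, and the weighted densities are interpreted as in \eqref{eq:unnormalized}--\eqref{eq:kappadensities}. Hence
\[
 \langle\varepsilon_*\mathcal L_n+\mathcal K_n\rangle_\varphi\le\mathcal R .
\]

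\emph{Step 2: bounding the errors.} By \eqref{eq:Domega}, each $R_i$ is an integral against $\dmu$ of a density of one of two forms. The first is $m\,\varphi^3D\varphi$ or $m\,\varphi^3D^2\varphi$. The second is $m\,\varphi^2(D\varphi)^2$. Here $m\in\{1,b^2,U,h^2,hV\}$, and $U\le b^2$, $|hV|\le hb$.

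Apply Young's inequality with exponents $(4/3,4)$ and $(2,2)$:
\[
 |m|\varphi^3|D^k\varphi|\le\varepsilon\,|m|^{4/3}\varphi^4+C_\varepsilon|D^k\varphi|^4,
 \qquad
 |m|\varphi^2(D\varphi)^2\le\varepsilon\,m^2\varphi^4+C_\varepsilon(D\varphi)^4 .
\]
One then checks the weights. For $m\in\{1,b^2,hb\}$ the bound $|m|^{4/3}\le m^2+1\lesssim 1+b^4+h^4$ holds. For $m=h^2$, $|m|^{4/3}\lesssim1+h^4$.

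\emph{Step 3: the surface case.} For $n=2$ only $R_1,R_5,R_8$ appear, because $\alpha,\beta,\chi,\zeta,\epsilon$ are absent. Their multipliers are $1$ and $h^2$, so control of $1+h^4$ suffices, which is exactly $\mathcal L_2$. Choosing $\varepsilon$ small relative to $\varepsilon_*$ and the fixed coefficients bounds $\mathcal R$ by
\[
 \frac{\varepsilon_*}2\langle\mathcal L_n\rangle_\varphi+C\int(|D\varphi|^4+|D^2\varphi|^4)\dmu .
\]

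\emph{Step 4: finiteness and absorption.} The quantity $\langle\mathcal L_n\rangle_\varphi$ is finite, since on the fixed compact slab containing the support all unnormalized densities are integrable, as shown after \eqref{eq:unnormalized}. The term can therefore be absorbed into the left side, giving \eqref{eq:cutoff}. The constant depends on $n$, the table coefficients, $\varepsilon_*$, and $\eta$ (hence $H$), but not on $\varphi$.

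The main obstacle is the bookkeeping in Step 2. One has to verify that every error multiplier is dominated by the retained density $\mathcal L_n$ after Young's inequality. The delicate points are the cubic term $hV\,D\omega_\varphi$ and the $b^2$ terms, which need $b^4$ on the left; this is why $\mathcal L_n$ contains $b^4$ when $n\ge3$. A further subtlety is that $\mathcal K_n$ must not be used to absorb anything, since it is only retained.
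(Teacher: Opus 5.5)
Your proposal is correct and follows the paper's proof. You sum the relations with the coercive coefficients (positive on the one-sided relations $E_4,E_5,E_8$), invoke Proposition~\ref{prop:coercive} or~\ref{prop:surfaces}, and absorb the cutoff errors into $\tfrac{\varepsilon_*}{2}\mathcal L_n$ by Young's inequality while leaving $\mathcal K_n$ untouched; the only differences are cosmetic (direct $(4/3,4)$ Young instead of the paper's $\sqrt3\,\mathcal L_n^{1/2}$ bound with a two-step Young inequality using $\mathcal L_n\ge1$), and your remark that $\langle\mathcal L_n\rangle_\varphi$ is finite makes explicit the justification for absorption that the paper leaves implicit.
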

\begin{proof}
First suppose $3\le n\le6$. All one-sided relations in Proposition~\ref{prop:identities} enter \eqref{eq:P} with positive coefficients, so
\begin{equation}\label{eq:combinedErrors}
 \int P_n\varphi^4\dmu
 \le R_1+\alpha R_2+\beta R_3+\chi R_4
                    +\upsilon R_5+\zeta R_6+\epsilon R_7+\ell R_8.
\end{equation}
All integrals here are over $\mathcal O_G$ and all scalar cutoff factors are composed with $G$.
Using \eqref{eq:Domega}, $U\le b^2$, and
$|hV|\le(b^2+h^2)/2$, the absolute values of the errors give
\begin{equation}\label{eq:errorbound}
 \int P_n\varphi^4\dmu
 \le C_n\int(1+b^2+h^2)
 \left[\varphi^3\bigl(|D\varphi|+|D^2\varphi|\bigr)
                  +\varphi^2|D\varphi|^2\right]\dmu.
\end{equation}
No additional ambient-curvature error occurs.

Set $L=1+b^4+h^4$. Then $1+b^2+h^2\le\sqrt3\,L^{1/2}$ and $L\ge1$.
For $d\ge0$ and $\varepsilon>0$, Young's inequality gives
\begin{align*}
 L^{1/2}\varphi^3d
 &\le\varepsilon L\varphi^4+C_\varepsilon\varphi^2d^2
 \le2\varepsilon L\varphi^4+C'_\varepsilon d^4,\\
 L^{1/2}\varphi^2d^2
 &\le\varepsilon L\varphi^4+C_\varepsilon d^4.
\end{align*}
Apply these inequalities with $d=|D\varphi|$ and $d=|D^2\varphi|$.
Choose $\varepsilon$ sufficiently small that the absorbed term is at most
$\varepsilon_*L\varphi^4/2$. Proposition~\ref{prop:coercive} then leaves
$\varepsilon_*L\varphi^4/2$ and, when $c\ge0$, all of
$\kappa\Gamma_n\varphi^4$ on the left. This proves \eqref{eq:cutoff} for $n\ge3$.

For $n=2$, use Proposition~\ref{prop:surfaces}. The error combination is
$R_1+R_5/2+4R_8$ when $c\ge0$, and $R_1+R_5/2+4\eta R_8$ when $c=-1$.
It satisfies \eqref{eq:errorbound} with $1+b^2+h^2$ replaced by
$1+h^2\le\sqrt2(1+h^4)^{1/2}$. The same absorption proves \eqref{eq:cutoff}.
In particular no $b^2$ error remains uncontrolled. For hyperbolic surfaces the constant may depend on the fixed value of $H$; uniformity as $H^2\downarrow1$ is not needed.
\end{proof}

The left-hand densities have explicit unnormalized meanings. In every dimension,
\[
 \mathcal L_n\dmu
 =\left[\frac{|\nabla G|^4}{G^3}+H^4G
             +\mathbf1_{\{n\ge3\}}|B|^4G\right]\dV.
\]
For $c\ge0$ and $n\ge3$, the other density is
\[
 \mathcal K_n\dmu
 =c\left[k_n\frac{|\nabla G|^2}{G}+j_n|B|^2G+n\ell H^2G\right]\dV;
\]
for $n=2$ it is $2c|\nabla G|^2G^{-1}\dV$.
These formulas also specify the estimates across the critical set of $G$.

\begin{lemma}[Pushforward of the unit flux]\label{lem:flux}
On $\mathcal O_G$, put $s=\log(t_0/G)$ and $t(s)=t_0e^{-s}$. For every nonnegative measurable $\beta$ on $(0,\infty)$,
\begin{equation}\label{eq:fluxdensity}
 \int_{\mathcal O_G}\beta(s)\frac{|\nabla G|^2}{G}\dV
 =\int_0^\infty\beta(s)\dd s.
\end{equation}
Consequently, as locally finite measures on $(0,\infty)$ (signed for $\kappa\mu$ when $c=-1$),
\begin{equation}\label{eq:fluxes}
 s_\#(h^2\mu)=H^2\,\mathrm ds,
 \qquad s_\#(\kappa\mu)=c\,\mathrm ds,
 \qquad s_\#\mu\le C_G^2\,\mathrm ds.
\end{equation}
\end{lemma}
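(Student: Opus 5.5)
The plan is to apply the coarea formula to the smooth function $G$ on $\mathcal O_G$, using the unit flux \eqref{eq:unitflux} on regular level sets. For nonnegative measurable $\beta$, write
\[
 \int_{\mathcal O_G}\beta(\log(t_0/G))\frac{|\nabla G|^2}{G}\dV
 =\int_0^{t_0}\beta(\log(t_0/t))\,\frac1t\int_{\{G=t\}}|\nabla G|\dAg\,\dd t .
\]
The critical set $\{|\nabla G|=0\}$ contributes nothing, because the integrand carries the factor $|\nabla G|^2$. Hence no separate treatment of critical points is needed. By Sard's theorem almost every $t\in(0,t_0)$ is a regular value, and for such $t$ the inner integral equals $1$ by \eqref{eq:unitflux}. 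The substitution $s=\log(t_0/t)$, with $\dd t/t=-\dd s$, sends $(0,t_0)$ to $(0,\infty)$ and gives \eqref{eq:fluxdensity}. Nonnegativity of $\beta$ makes the coarea formula (Tonelli form) valid without integrability assumptions. Since every slab $\{\alpha\le G\le\beta'\}$ is compact, the pushforwards of the densities below are locally finite on $(0,\infty)$.

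For the three flux statements I would use the unnormalized interpretations \eqref{eq:unnormalized}--\eqref{eq:kappadensities}.
\begin{itemize}
\item First, $h^2\dmu=H^2w^2G\dV=H^2|\nabla G|^2G^{-1}\dV$. Applying \eqref{eq:fluxdensity} with $\beta$ the indicator of a Borel set $E\subset(0,\infty)$ gives $(s_\#(h^2\mu))(E)=H^2|E|$.
\item Similarly, $\kappa\dmu=c|\nabla G|^2G^{-1}\dV$ gives $s_\#(\kappa\mu)=c\,\dd s$. When $c=-1$ this is read as a signed measure, namely $-1$ times the positive measure $s_\#(|\nabla G|^2G^{-1}\dV)$.
\item Finally, $\dmu=w^4G\dV=w^2\cdot|\nabla G|^2G^{-1}\dV$. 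Since $w\le C_G$ on $\mathcal O_G$ by \eqref{eq:gradient}, we get $s_\#\mu\le C_G^2\,s_\#(|\nabla G|^2G^{-1}\dV)=C_G^2\,\dd s$.
\end{itemize}

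The only delicate step is justifying the use of the coarea formula when the level sets of $G$ may be singular. The analyticity of $G$ noted in Section~\ref{sec:identities} gives that the critical set has measure zero, but this is not really needed here. The factor $|\nabla G|$ in the coarea Jacobian already makes the formula exact. The unit-flux identity is used only on the full-measure set of regular values, and local finiteness comes from the compactness of Green slabs in Lemma~\ref{lem:green}.
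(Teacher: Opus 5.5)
Your proposal is correct and follows essentially the same route as the paper. You apply the coarea formula with the unit flux on the full-measure set of regular values, justified by Tonelli. You then read off the three pushforwards from the unnormalized densities $h^2\dmu=H^2|\nabla G|^2G^{-1}\dV$, $\kappa\dmu=c|\nabla G|^2G^{-1}\dV$, and $\dmu=w^2|\nabla G|^2G^{-1}\dV$ together with $w\le C_G$.
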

\begin{proof}
Let $q=|\nabla G|$. Coarea and \eqref{eq:unitflux} give
\begin{align*}
 \int_{\mathcal O_G}\beta(s)\frac{q^2}{G}\dV
 &=\int_0^{t_0}\frac{\beta(\log(t_0/t))}{t}
                \left(\int_{\{G=t\}}q\dAg\right)\dd t\\
 &=\int_0^{t_0}\beta(\log(t_0/t))\frac{\dd t}{t}
  =\int_0^\infty\beta(s)\dd s.
\end{align*}
Tonelli's theorem permits infinite values. Critical levels do not affect the formula, since regular values have full measure and the coarea density vanishes at $q=0$.

Now
\[
 h^2\dmu=H^2\frac{q^2}{G}\dV,\qquad
 \kappa\dmu=c\frac{q^2}{G}\dV,\qquad
 \dmu=w^2\frac{q^2}{G}\dV.
\]
The first two measure identities follow from \eqref{eq:fluxdensity}, while $w\le C_G$ gives the third. Here $s_\#$ denotes measure pushforward: for a measure $\nu_*$ on $\mathcal O_G$, $s_\#\nu_*$ is characterized by $\int\beta\dd(s_\#\nu_*)=\int\beta(s(x))\dd\nu_*(x)$.
\end{proof}

\begin{proof}[Proof of Theorem~\ref{thm:bounded}]
We argue under \eqref{eq:positiveGamma}. Choose a fixed $\psi\in C_c^\infty((1,4))$ with $0\le\psi\le1$ and $\psi=1$ on $[2,3]$. For $R\ge1$, define
\begin{equation}\label{eq:longcutoff}
 \varphi_R(t)=\psi\left(\frac{\log(t_0/t)}R\right).
\end{equation}
For each finite $R$, its support lies in a compact positive Green interval. Since $D=-\partial_s$,
\[
 |D\varphi_R|\le C_\psi R^{-1},\qquad
 |D^2\varphi_R|\le C_\psi R^{-2},
\]
and these derivatives are supported where $R<s<4R$.
Lemma~\ref{lem:flux} therefore bounds the right-hand side of \eqref{eq:cutoff} by
\begin{equation}\label{eq:uppercutoff}
 C_{n,H}C_G^2(R^{-3}+R^{-7}).
\end{equation}
On the left, $\mathcal L_n\ge1+h^4\ge2h^2$.
If $c\ge0$, use $k_n$ from \eqref{eq:Gamma} for $n\ge3$ and set $k_2=2$.
Then $\mathcal K_n\ge k_n\kappa$, and \eqref{eq:fluxes} gives
\begin{equation}\label{eq:lowercutoff}
 \int\left(\frac{\varepsilon_*}{2}\mathcal L_n+\mathcal K_n\right)
          \varphi_R^4\dmu
 \ge(\varepsilon_*H^2+k_nc)
       \int_0^\infty\psi(s/R)^4\dd s
 \ge(\varepsilon_*H^2+k_nc)R.
\end{equation}
This coefficient is strictly positive whenever $c+H^2>0$.
If $c=-1$, the hypotheses \eqref{eq:threshold} ensure $H^2>1$ and $\varepsilon_*>0$, and the lower bound is instead
\[
 \frac{\varepsilon_*}{2}\int\mathcal L_n\varphi_R^4\dmu
 \ge\varepsilon_*H^2\int_0^\infty\psi(s/R)^4\dd s
 \ge\varepsilon_*H^2R.
\]
In each case the linear lower bound contradicts \eqref{eq:uppercutoff} as $R\to\infty$. Hence the noncompact bounded-curvature alternative in Theorem~\ref{thm:bounded} has the stated restrictions.
\end{proof}

\begin{remark}
For $c=0$, the contradiction is driven by $H^2\,\mathrm ds$; for $c=1$, the ambient-curvature flux remains when $H=0$. For $c=-1$, negative ambient terms are absorbed in the pointwise estimate and the positive $H^2\,\mathrm ds$ flux gives the contradiction. Both estimates use the same compactly supported function. No global integrability of $G|B|^4$ or $G$ is assumed, and no boundary contribution at infinity is discarded.
\end{remark}

\subsection{Proofs of the main results and stability corollaries}
\begin{proof}[Proof of Theorem~\ref{thm:sphere}]
Finite strong index gives exterior stability by Lemma~\ref{lem:exterior} and bounded curvature by Lemma~\ref{lem:curvature}. If $M$ were noncompact, Theorem~\ref{thm:bounded} would imply $1+H^2=0$, which is impossible. Thus $M$ is compact.
\end{proof}
\begin{proof}[Proof of Theorem~\ref{thm:euclidean}]
Apply the same two lemmas and then Theorem~\ref{thm:bounded} with $c=0$. Since $M$ is noncompact, the conclusion is $H^2=0$.
\end{proof}
\begin{proof}[Proof of Theorem~\ref{thm:hyperbolic}]
Finite strong index supplies exterior stability and, by Lemma~\ref{lem:curvature}, bounded second fundamental form. If $M$ were noncompact, Theorem~\ref{thm:bounded} would contradict the relevant condition in \eqref{eq:threshold}. Thus $M$ is compact.
\end{proof}
\begin{proof}[Proof of Corollary~\ref{cor:constrained}]
The index comparison \eqref{eq:indexcomparison} turns finite volume-constrained index into finite strong index. Apply the appropriate main theorem.
\end{proof}
\begin{proof}[Proof of Corollary~\ref{cor:strong}]
In the sphere, strong stability gives index zero and hence compactness by Theorem~\ref{thm:sphere}. Only at this point use the constant test function $1$:
\[
 Q_1(1)=-\int_M(|A|^2+n)\dV<0,
\]
a contradiction.

In $\mathbb H^{n+1}(-1)$ under \eqref{eq:threshold}, strong stability gives index zero, so Theorem~\ref{thm:hyperbolic} makes $M$ compact. Now
\[
 Q_{-1}(1)=-\int_M\bigl(|B|^2+n(H^2-1)\bigr)\dV<0,
\]
again a contradiction.

In Euclidean space, a compact strongly stable immersion would satisfy $Q_0(1)=-\int_M|A|^2\dV\ge0$, so $A\equiv0$. Then $H=0$ and $\Delta|F|^2=2n$, impossible on a compact manifold without boundary. Thus $M$ is noncompact. Theorem~\ref{thm:euclidean} gives $H=0$, and Theorem~\ref{thm:HLW} gives an affine hyperplane. The immersion is a local isometry onto that plane. Completeness makes it a covering map, and the plane is simply connected, so the covering is trivial.
\end{proof}
\begin{proof}[Proof of Corollary~\ref{cor:weak}]
Weak stability means $\ind_0(Q_1)=0$, so \eqref{eq:indexcomparison} gives $\ind(Q_1)\le1$. Theorem~\ref{thm:sphere} makes $M$ compact. The compact weakly stable CMC classification \cite[Theorem~1.2]{BCE88} applies to this two-sided immersion and gives a geodesic $n$-sphere as its image. The immersion onto that image is a local isometry from a complete connected manifold, hence a covering. Since an $n$-sphere with $n\ge2$ is simply connected, the covering is trivial.
\end{proof}
\begin{remark}[Finite index versus stability]
The general finite-index conclusions are compactness in the sphere and minimality in the noncompact Euclidean case, not a classification of all finite-index immersions. For example, non-equatorial geodesic spheres in $\mathbb S^{n+1}(1)$ have nonzero constant mean curvature and finite strong index. Strong and weak stability are different: geodesic spheres are weakly stable but fail the constant-function strong-stability test.
\end{remark}

\section{Examples and dimensional scope}\label{sec:examples}
\subsection{Stable hyperbolic tubes}\label{sec:tubes}
In this section, $n$ denotes the intrinsic dimension, so that the ambient dimension is $n+1$. The main theorems above cover $2\le n\le6$. We compute the geometry and stability of a classical tube family explicitly, to identify endpoint phenomena and the need for dimension-dependent thresholds.

\begin{remark}[The surface endpoint]\label{rem:surfaceEndpoint}
The strict inequality $H^2>1$ in dimension two cannot be replaced by
$H^2\ge1$. A horosphere in $\mathbb H^3(-1)$ has induced Euclidean metric,
$A=g$, $H=1$, and $|A|^2-2=0$. It is complete, noncompact, and strongly stable.
The surface threshold thus has a genuine endpoint example. No optimality
claim is made for $\tau_3,\ldots,\tau_6$.
\end{remark}

\begin{proposition}[A stable tube family]\label{prop:tubes}
Let $k\ge2$ and $m\ge1$ be integers, put $n=k+m$, and let $\sigma>0$. In the hyperboloid model
\[
 \mathbb H^{n+1}(-1)\subset\mathbb R^{k,1}\oplus\mathbb R^{m+1},
\]
define
\begin{equation}\label{eq:tubeF}
 F_\sigma(y,z)=\bigl(\sqrt{1+\sigma}\,y,\sqrt\sigma\,z\bigr),
 \qquad (y,z)\in\mathbb H^k(-1)\times\mathbb S^m(1).
\end{equation}
This is a complete, connected, noncompact, boundaryless, two-sided CMC embedding with
\begin{align}
 g_\sigma&=(1+\sigma)g_{\mathbb H^k}+\sigma g_{\mathbb S^m},\label{eq:tubeg}\\
 H_\sigma&=\frac{n\sigma+m}{n\sqrt{\sigma(1+\sigma)}},
 \qquad |A_\sigma|^2-n=\frac m\sigma-\frac k{1+\sigma}.
 \label{eq:tubeH}
\end{align}
It is strongly stable whenever
\begin{equation}\label{eq:tubestability}
 \bigl((k+1)^2-4m\bigr)\sigma\ge4m.
\end{equation}
If $D_{k,m}:=(k+1)^2-4m>0$, equality in \eqref{eq:tubestability} is attained at $\sigma_*=4m/D_{k,m}$, where
\begin{equation}\label{eq:tubeendpoint}
 H_{\sigma_*}^2=
 \frac{m(k^2+6k+1)^2}{4(k+m)^2(k+1)^2}.
\end{equation}
\end{proposition}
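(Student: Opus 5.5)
The plan is to verify the three assertions in turn by direct computation in the hyperboloid model: the tube is a CMC embedding, the formulas \eqref{eq:tubeg}--\eqref{eq:tubeH} hold, and \eqref{eq:tubestability} follows from the bottom of the spectrum of $\mathbb H^k(-1)$.

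\emph{Geometry.} Write $X=F_\sigma(y,z)$ and let $\langle\cdot,\cdot\rangle$ be the Lorentzian form of $\mathbb R^{k,1}\oplus\mathbb R^{m+1}$. Then
\[
 \langle X,X\rangle=-(1+\sigma)+\sigma=-1,
\]
so $F_\sigma$ maps into $\mathbb H^{n+1}(-1)$. It is injective, and it is proper because $|y|$ controls the image. Its induced metric is visibly \eqref{eq:tubeg}, which is complete as a product of complete metrics. It is connected and noncompact since $k\ge2$ and $m\ge1$.

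For the normal I take
\[
 N=(\sqrt\sigma\,y,\sqrt{1+\sigma}\,z).
\]
One checks $\langle N,N\rangle=1$ and $\langle N,X\rangle=0$. Moreover $N$ is orthogonal to the tangent vectors $(\sqrt{1+\sigma}\,u,0)$ with $\langle u,y\rangle=0$ and $(0,\sqrt\sigma\,v)$ with $v\perp z$. This gives a global unit normal, so the tube is two-sided.

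The hyperbolic connection is the tangential projection of the flat derivative, and $\langle dN,X\rangle=0$. Hence $\bar\nabla N=dN$ on tangent vectors. Consequently the $\mathbb H^k$ directions are principal with curvature $\sqrt{\sigma/(1+\sigma)}$, and the $\mathbb S^m$ directions are principal with curvature $\sqrt{(1+\sigma)/\sigma}$. Averaging gives $H_\sigma$ in \eqref{eq:tubeH}. Summing squares gives
\[
 |A_\sigma|^2=k\frac{\sigma}{1+\sigma}+m\frac{1+\sigma}{\sigma}=n-\frac k{1+\sigma}+\frac m\sigma .
\]
Both quantities are constant, so the embedding is CMC.

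\emph{Stability.} The quadratic form is
\[
 Q_{-1}(f)=\int\bigl(|\nabla f|^2-(|A_\sigma|^2-n)f^2\bigr).
\]
For $f\in C_c^\infty(\mathbb H^k\times\mathbb S^m)$ I drop the nonnegative $\mathbb S^m$-gradient and apply, on each slice $\{z\}$, McKean's bound $\lambda_0(\mathbb H^k(-1))=(k-1)^2/4$. For the scaled metric $(1+\sigma)g_{\mathbb H^k}$ this bound becomes $(k-1)^2/(4(1+\sigma))$. Integrating over $z$ by Fubini, exactly as in the proof of Theorem~\ref{thm:HLW}, gives
\[
 Q_{-1}(f)\ge\Bigl(\frac{(k-1)^2}{4(1+\sigma)}-\frac m\sigma+\frac k{1+\sigma}\Bigr)\int f^2 .
\]
Multiplying the bracket by $4\sigma(1+\sigma)$ turns nonnegativity into
\[
 \bigl((k-1)^2+4k\bigr)\sigma-4m(1+\sigma)\ge0,
\]
which is precisely \eqref{eq:tubestability}. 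If $D_{k,m}>0$, solving the equality case gives $\sigma_*=4m/D_{k,m}$, and then $1+\sigma_*=(k+1)^2/D_{k,m}$. Substituting these values into $H_\sigma^2$ yields \eqref{eq:tubeendpoint}, using
\[
 n\sigma_*+m=\frac{m\bigl(4n+D_{k,m}\bigr)}{D_{k,m}}=\frac{m(k^2+6k+1)}{D_{k,m}} .
\]

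\emph{Main obstacle.} The only step needing care is the spectral input. The plan is to invoke McKean's theorem rather than reprove it, or alternatively to obtain it from the positive function $e^{-(k-1)r/2}$ built from a Busemann function, via Barta's inequality. The remaining steps are sign bookkeeping: checking that the Minkowski normal projection introduces no position term into the shape operator.
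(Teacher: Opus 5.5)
Your proposal is correct and follows essentially the paper's route: the same unit normal $N_\sigma$, the same principal curvatures, and the same spectral constant $(k-1)^2/(4(1+\sigma))$, leading to exactly the condition $((k+1)^2-4m)\sigma\ge4m$ and the same endpoint algebra. The only difference is packaging. Instead of citing McKean and slicing by Fubini, the paper takes the positive function $u=t^{(k-1)/2}$ in upper-half-space coordinates on $\mathbb H^k$, which is your Busemann fallback $e^{-(k-1)r/2}$. Since $u$ is independent of $z$, it satisfies $\Delta_{g_\sigma}u=-\lambda_\sigma u$ on the whole product, and the paper then applies the ground-state identity $Q_{-1}(f)=\int u^2|\nabla(f/u)|^2+(\lambda_\sigma-q_\sigma)\int f^2$ directly, which keeps the argument self-contained.
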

\begin{proof}
The Lorentzian inner product of $F_\sigma$ with itself is $-(1+\sigma)+\sigma=-1$. The map is injective and the metric is \eqref{eq:tubeg}, which is complete. The vector field
\[
 N_\sigma=(\sqrt\sigma\,y,\sqrt{1+\sigma}\,z)
\]
has squared length one and is orthogonal to $F_\sigma$ and its tangent space. Differentiating this field along the two product factors gives principal curvatures
\begin{equation}\label{eq:tubecurvatures}
 \alpha=\sqrt{\frac\sigma{1+\sigma}}\quad(k\text{ times}),
 \qquad
 \beta=\sqrt{\frac{1+\sigma}\sigma}\quad(m\text{ times}).
\end{equation}
Indeed, the flat derivative of $N_\sigma$ is tangent to the immersed hypersurface, so it agrees with its hyperbolic covariant derivative. Taking the trace and the squared norm gives \eqref{eq:tubeH}.

To prove strong stability directly, use upper-half-space coordinates $(x,t)\in\mathbb R^{k-1}\times(0,\infty)$ on $\mathbb H^k(-1)$. Its Laplacian is
\[
 \Delta_{\mathbb H^k}=t^2(\Delta_x+\partial_t^2)-(k-2)t\partial_t.
\]
For the positive function $u=t^{(k-1)/2}$, independent of $z$, this yields
\[
 \Delta_{g_\sigma}u=-\lambda_\sigma u,
 \qquad \lambda_\sigma=\frac{(k-1)^2}{4(1+\sigma)}.
\]
Write $q_\sigma=|A_\sigma|^2-n$. If $f$ is compactly supported and smooth, expansion with $f=uv$ and integration by parts give
\begin{align}
 Q_{-1}(f)
 &=\int\bigl(|\nabla f|^2-q_\sigma f^2\bigr)\,\mathrm dV_{g_\sigma}\notag\\
 &=\int u^2|\nabla(f/u)|^2\,\mathrm dV_{g_\sigma}
       +(\lambda_\sigma-q_\sigma)\int f^2\,\mathrm dV_{g_\sigma}.
 \label{eq:tubegroundstate}
\end{align}
The second coefficient is
\begin{equation}\label{eq:tubemargin}
 \lambda_\sigma-q_\sigma
 =\frac{\bigl((k+1)^2-4m\bigr)\sigma-4m}
        {4\sigma(1+\sigma)}.
\end{equation}
Thus \eqref{eq:tubestability} makes both terms in \eqref{eq:tubegroundstate} nonnegative. This proof requires neither an assumed spectral formula nor an integrability condition on $u$, because all test functions have compact support. Substituting $\sigma_*$ in \eqref{eq:tubeH} proves \eqref{eq:tubeendpoint}.
\end{proof}

\begin{corollary}[A noncompact strongly stable example with $H>1$]\label{cor:tube6}
There is a complete noncompact strongly stable CMC embedding into $\mathbb H^7(-1)$ with $H^2=49/48$.
\end{corollary}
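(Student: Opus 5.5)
The plan is to specialize Proposition~\ref{prop:tubes} to ambient dimension seven, so intrinsic dimension $n=k+m=6$, and to choose the splitting $(k,m)$ and the parameter $\sigma$ so that the stability condition \eqref{eq:tubestability} holds while the mean curvature \eqref{eq:tubeH} satisfies $H_\sigma^2=49/48$.

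First I would scan the admissible splittings $k\ge2$, $m\ge1$, $k+m=6$, looking for one for which the stable range contains a value with $H^2>1$. The three relevant formulas are the discriminant $D_{k,m}=(k+1)^2-4m$, the threshold $\sigma_*=4m/D_{k,m}$, and $H_\sigma^2=(6\sigma+m)^2/\bigl(36\sigma(1+\sigma)\bigr)$.
\begin{itemize}
\item For $(k,m)=(5,1)$ and $(4,2)$, the endpoint value \eqref{eq:tubeendpoint} is below one, namely $49/81$ and $1681/1800$. Solving $H_\sigma^2=49/48$ in these cases gives a root $\sigma$ smaller than $\sigma_*$, so these tubes are useless.
\item For $(k,m)=(3,3)$ one gets $D_{3,3}=16-12=4>0$ and $\sigma_*=3$, so this is the case to use.
\end{itemize}

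Next I would take $\sigma=\sigma_*=3$ and check the hypotheses. Equality holds in \eqref{eq:tubestability}, since $4\cdot3=12=4m$. Hence \eqref{eq:tubemargin} vanishes and the ground-state identity \eqref{eq:tubegroundstate} gives $Q_{-1}(f)\ge0$ for all compactly supported $f$.

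Then I would compute the mean curvature directly from \eqref{eq:tubeH}:
\[
 H_3=\frac{6\cdot3+3}{6\sqrt{3\cdot4}},\qquad
 H_3^2=\frac{441}{36\cdot12}=\frac{441}{432}=\frac{49}{48}.
\]
As a cross-check, \eqref{eq:tubeendpoint} gives $3\cdot28^2/(4\cdot36\cdot16)=2352/2304=49/48$, which agrees.

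Finally, Proposition~\ref{prop:tubes} already supplies the remaining properties of $F_3:\mathbb H^3(-1)\times\mathbb S^3(1)\to\mathbb H^7(-1)$: it is a complete, connected, noncompact, boundaryless, two-sided CMC embedding.

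The only real step is finding the right splitting; once $(3,3)$ and $\sigma=3$ are chosen, everything is arithmetic. I do not expect any genuine obstacle beyond confirming that the stability endpoint, rather than some interior value of $\sigma$, is where $H^2$ exceeds one.
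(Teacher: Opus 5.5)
Your proposal is correct and takes the same route as the paper: specialize Proposition~\ref{prop:tubes} to $k=m=3$ and $\sigma=3$. There the margin \eqref{eq:tubemargin} vanishes, equivalently $q_\sigma=\lambda_\sigma=1/4$, so the tube is strongly stable, and \eqref{eq:tubeH} gives $H^2=49/48$. Your scan of the other splittings is motivation the proof does not need, and its arithmetic is also correct.
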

\begin{proof}
Take $k=m=3$ and $\sigma=3$. Then
\[
 F(y,z)=(2y,\sqrt3\,z),\qquad
 g=4g_{\mathbb H^3}+3g_{\mathbb S^3},
\]
while
\[
 H=\frac7{4\sqrt3},\qquad |A|^2=\frac{25}{4},
 \qquad q_\sigma=\lambda_\sigma=\frac14.
\]
Equation~\eqref{eq:tubegroundstate} proves strong stability. In fact, all $\sigma\ge3$ with $k=m=3$ are strongly stable, and their mean curvatures satisfy
\[
 H_\sigma^2=1+\frac1{4\sigma(1+\sigma)}>1.
\]
Thus this family approaches the horospherical value from above without becoming compact.
\end{proof}

\begin{corollary}[No dimension-independent hyperbolic threshold]\label{cor:highdimtube}
There is a sequence of complete noncompact strongly stable CMC embeddings
\[
 M^{3m}\longrightarrow\mathbb H^{3m+1}(-1),\qquad m\ge1,
\]
whose squared averaged mean curvatures satisfy
\begin{equation}\label{eq:unboundedTube}
 H_m^2=\frac{(4m^2+12m+1)^2}{36m(2m+1)^2},
 \qquad \lim_{m\to\infty}\frac{H_m^2}{3m}=\frac1{27}.
\end{equation}
Consequently no fixed finite constant $T$ can imply compactness from finite index and $H^2\ge T$ in every dimension.
\end{corollary}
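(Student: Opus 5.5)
The plan is to specialize Proposition~\ref{prop:tubes} to $k=2m+1$ and intrinsic factor dimension $m$ on the sphere side. Then $n=k+m=3m+1$, so this choice is wrong. Instead take $k=2m$ and sphere dimension $m$, giving intrinsic dimension $n=3m$ and ambient dimension $3m+1$, as required. The constraint $k\ge2$ holds for every $m\ge1$. We have
\[
 D_{k,m}=(2m+1)^2-4m=4m^2+1>0,
\]
so the endpoint $\sigma_*=4m/(4m^2+1)$ exists. Every $\sigma\ge\sigma_*$ satisfies \eqref{eq:tubestability}. The proposition then gives a complete, connected, noncompact, two-sided, strongly stable CMC embedding $M^{3m}\to\mathbb H^{3m+1}(-1)$ for each $m\ge1$.

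Next I would evaluate \eqref{eq:tubeendpoint} at $k=2m$ and $n=3m$:
\[
 H_m^2=\frac{m\bigl((2m)^2+12m+1\bigr)^2}{4(3m)^2(2m+1)^2}
 =\frac{(4m^2+12m+1)^2}{36m(2m+1)^2},
\]
which matches \eqref{eq:unboundedTube}. For the limit, the numerator is asymptotic to $16m^4$ and the denominator to $144m^3$. Hence $H_m^2\sim m/9$, and $H_m^2/(3m)\to1/27$. In particular $H_m^2\to\infty$.

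For the consequence, fix any finite $T$ and choose $m$ with $H_m^2\ge T$. The embedding is strongly stable, so $\ind(Q_{-1})=0<\infty$. It has $H^2\ge T$, yet its domain is noncompact. This refutes any implication ``finite index and $H^2\ge T$ imply compactness'' that would hold in every dimension. The only point needing care is bookkeeping: the roles of $k$ and $m$ in Proposition~\ref{prop:tubes}, and the averaged-$H$ convention, must be matched so that the displayed formula comes out exactly. No analytic obstacle arises, since stability is supplied entirely by Proposition~\ref{prop:tubes}.
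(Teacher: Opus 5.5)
Your proposal is correct and follows the paper's proof: specialize Proposition~\ref{prop:tubes} to $k=2m$, $n=3m$, $\sigma=\sigma_*=4m/(4m^2+1)$, read off $H_m^2$ from \eqref{eq:tubeendpoint}, and use index zero together with $H_m^2\to\infty$ and noncompactness. The only blemish is the abandoned first choice $k=2m+1$, which you should delete from the write-up.
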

\begin{proof}
In Proposition~\ref{prop:tubes}, take $k=2m$, $n=3m$, and
\[
 \sigma=\sigma_*=\frac{4m}{4m^2+1}.
\]
Then $D_{2m,m}=4m^2+1>0$, and equality holds in \eqref{eq:tubestability}.
The resulting embedding is strongly stable. Formula~\eqref{eq:tubeendpoint} gives the value of $H_m^2$ in \eqref{eq:unboundedTube}. Dividing its numerator and denominator by the corresponding powers of $m$ gives the displayed limit. Hence $H_m^2\to\infty$, while every domain is noncompact and has index zero.
\end{proof}
This conclusion concerns a threshold independent of dimension; it does not exclude a suitable threshold depending on $n$.

\subsection{Dimensional restrictions of the argument}\label{sec:dimensions}
The space-form formulas \eqref{eq:Ric} and \eqref{eq:simons}, the divergence-free tensors, and Proposition~\ref{prop:identities} hold in every dimension $n\ge2$. Likewise, the coefficient relation $\beta=2\chi/(n+2)$ matches the Codazzi--Kato inequality in every dimension, as in \eqref{eq:KatoMatch}. The additional scalar and mixed quartic positivity is proved here only for the coefficient rows with $3\le n\le6$, together with the separate surface argument. Kato matching alone does not establish coercivity in a higher dimension.

The curvature reduction is a separate issue. Lemma~\ref{lem:curvature} uses flatness of complete stable minimal immersions in the relevant Euclidean dimension. Hardt--Simon~\cite{HS85} construct smooth area-minimizing hypersurfaces asymptotic to minimizing cones; their result applied to the Simons cone gives a complete nonflat stable example in $\mathbb R^8$. For $n\ge8$, the nonaffine entire minimal graphs of Bombieri--De Giorgi--Giusti~\cite{BDG69} give further smooth complete examples. Thus this flatness input does not extend to every dimension. Increasing a fixed $|H|$ does not by itself repair the blow-up argument, since $H/\lambda_j\to0$ still holds.

These examples rule out an all-dimensional extension of the strong-stability flatness corollary, but not the weaker assertion that a finite-index noncompact CMC immersion is minimal: the examples themselves have $H=0$. Corollary~\ref{cor:highdimtube} separately rules out a dimension-independent hyperbolic threshold. Higher-dimensional statements may therefore require different coefficients, additional geometric information, or a replacement for the flatness input. The next classical result illustrates an extension with an explicit volume-growth assumption.

\subsection{A classical volume-growth criterion}\label{subsec:entropy}
For a complete noncompact manifold, define
\begin{equation}\label{eq:entropy}
 \mathfrak h(M)=\limsup_{R\to\infty}\frac{\log\Vol_gB_R(o)}R.
\end{equation}
The value is independent of the base point. The entropy criteria of Ilias--Nelli--Soret~\cite[Corollaries~7.1 and~7.2 in the cited preprint]{INS16}, based on the spectrum--growth comparison of Brooks~\cite{Brooks81}, contain the following consequence. Related polynomial-growth hypotheses are treated in \cite{AdC93}.

\begin{proposition}[The known zero-entropy consequence]\label{thm:alln}
Let $n\ge2$, $c\in\{-1,0,1\}$, and let
$F:(M^n,g)\to X_c^{n+1}$ be a smooth, connected, complete, noncompact, boundaryless, two-sided CMC immersion with $H=\tr_gA/n$.
If its strong or volume-constrained Morse index is finite and $\mathfrak h(M)=0$, then
\begin{equation}\label{eq:allnconclusion}
 H^2+c\le0.
\end{equation}
\end{proposition}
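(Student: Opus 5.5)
By Lemma~\ref{lem:exterior}, it suffices to treat finite strong index: a finite volume-constrained index forces a finite strong index through \eqref{eq:indexcomparison}. Lemma~\ref{lem:exterior} then supplies a compact set $K$ with $M\setminus K$ strongly stable. I argue by contradiction and assume $\gamma=c+H^2>0$. The plan is the Brooks-type comparison: exterior stability yields a positive bottom of the essential spectrum, and this is incompatible with subexponential volume growth. No curvature bound and no Green-function machinery are needed, so the argument works for every $n\ge2$.

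\emph{Step 1 (exterior spectral gap).} On $M\setminus K$, inequality \eqref{eq:stability} and $a^2\ge0$ give
\[
 n\gamma\int_M f^2\dV\le\int_M|\nabla f|^2\dV\qquad\text{for }f\in C_c^\infty(M\setminus K).
\]
Hence the Dirichlet bottom of the spectrum on the exterior is at least $n\gamma>0$. Consequently the bottom of the essential spectrum of $-\Delta$ on $M$ is at least $n\gamma$.

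\emph{Step 2 (zero entropy forces test functions with small Rayleigh quotient near infinity).} Since $\mathfrak h(M)=0$, fix $\varepsilon>0$ and set $f=e^{-\varepsilon r}$ with $r=d_g(o,\cdot)$. Then $f\in L^2$: for $R$ large we have $\Vol_g B_R(o)\le e^{\varepsilon R/2}$, and the layer-cake estimate converges. Also $|\nabla f|=\varepsilon f$ almost everywhere. I then form the truncations $f_R=f\,\phi_R$, where $\phi_R$ is a Lipschitz cutoff that vanishes on $B_{R_0}(o)\supset K$, equals one on $B_{R}(o)\setminus B_{R_0+1}(o)$, and decays linearly to zero between $R$ and $2R$.

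\emph{Step 3 (conclusion).} Testing Step~1 on $f_R$ gives
\[
 n\gamma\|f_R\|_2^2\le 2\varepsilon^2\|f_R\|_2^2+C\int_{B_{R_0+1}}f^2+2R^{-2}\int f^2.
\]
The inner term and the tail term are bounded independently of $R$. Therefore, provided $\int_{M\setminus B_{R_0+1}}f^2$ is large relative to those bounded contributions, we obtain $n\gamma\le 2\varepsilon^2+o(1)$.

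\emph{Main obstacle.} The difficulty is exactly this last proviso: I must ensure that the exterior mass of $f$ dominates the bounded inner contributions. I plan to handle it by first choosing $R_0$, and then sending $\varepsilon\to0$. By Lemma~\ref{lem:volume}, $\Vol(M)=\infty$, so $\int_{M\setminus B_{R_0+1}}e^{-2\varepsilon r}\to\infty$ as $\varepsilon\to0$, while $\int_{B_{R_0+1}}f^2\le\Vol B_{R_0+1}$ stays bounded. Dividing through and letting first $R\to\infty$ and then $\varepsilon\to0$ gives $n\gamma\le0$. This contradicts $\gamma>0$ and proves \eqref{eq:allnconclusion}.

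A technical point remains: the application of Lemma~\ref{lem:volume} for $c=-1$ uses only $\Theta=|H|$, so it is valid in every dimension $n\ge2$. For Lipschitz tests I will use the $W^{1,2}$ extension of \eqref{eq:stability} noted after its statement.
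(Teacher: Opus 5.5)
Your argument is correct, but it takes a different route from the paper's.

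\emph{The paper's route.} It uses no exponentially decaying test function. It applies exterior stability, with $\lambda=n\gamma$, to the plateau functions $\chi\eta_{R,L}$. These equal one on $B_R(o)\setminus B_{a_0}(o)$ and decrease linearly to zero across an annulus of fixed width $L$. This gives $\lambda(V(R)-V(a_0))\le C_\chi+L^{-2}(V(R+L)-V(R))$. Since $V(R)\to\infty$, this becomes the recursion $V(R+L)\ge(1+\lambda L^2/2)V(R)$. Iterating it yields the explicit bound $\mathfrak h(M)\ge L^{-1}\log(1+\lambda L^2/2)>0$.

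\emph{Your route.} You run the classical Brooks mechanism. Zero entropy enters only to place $e^{-\varepsilon r}$ in $L^2$, and its Rayleigh quotient is $\varepsilon^2$. Infinite volume, through monotone convergence as $\varepsilon\to0$ (taken after $R\to\infty$, with $R_0$ fixed), makes the exterior mass dominate the fixed inner cutoff error. Hence test functions supported outside $B_{R_0}(o)$ have Rayleigh quotients tending to zero, which contradicts the exterior gap $n\gamma$.

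Both proofs rest on the same two inputs: exterior stability, and the infinite volume from Lemma~\ref{lem:volume}. One small correction to your justification. Lemma~\ref{lem:volume} and Lemma~\ref{lem:exterior} apply for every $n\ge2$ because their proofs use neither $n\le6$ nor, in the case of Lemma~\ref{lem:volume}, the curvature bound assumed throughout Section~\ref{sec:green}. The observation that $\Theta=|H|$ when $c=-1$ is not the relevant point.

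\emph{What each buys.} The paper's version converts stability directly into a quantitative, though non-sharp, lower bound on the entropy, and it raises no integrability question. Yours isolates a purely Riemannian fact, independent of the CMC structure: zero entropy together with infinite volume forces the bottom of the essential spectrum to vanish. The price is the $L^2$ step and the ordered double limit.
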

A direct proof is included in Appendix~\ref{app:entropy}. More generally, the cited entropy comparison gives
\[
 n(H^2+c)\le\frac{\mathfrak h(M)^2}{4}
\]
for complete noncompact finite-index CMC immersions into a space form. Unlike Theorems~\ref{thm:sphere}--\ref{thm:hyperbolic}, the zero-entropy consequence assumes subexponential intrinsic volume growth. Neither that assumption nor the quantitative entropy comparison is used in the main proof.

\begin{remark}[Complete constant-curvature curves]\label{rem:curves}
The main results concern $n\ge2$, but the one-dimensional conclusion is elementary. A connected complete noncompact one-manifold is the arclength line $\mathbb R$. For a curve of constant signed geodesic curvature $H$ in $X_c^2$,
\[
 Q_c(f)=\int_{\mathbb R}\bigl(|f'|^2-(H^2+c)f^2\bigr)\dd s.
\]
If $H^2+c>0$, a long plateau has negative form, and disjoint translates give infinite strong index. Lemma~\ref{lem:exterior} then also gives infinite constrained index. Thus noncompact finite-index curves have $H^2+c\le0$. In particular the spherical compactness and Euclidean minimality statements hold for curves, and the hyperbolic sufficient condition is $|H|>1$.
\end{remark}

\appendix
\section{Explicit coefficients and positivity margins}\label{app:coefficients}
This appendix gives the finite rational calculations used in Proposition~\ref{prop:coercive}. The geometric and analytic arguments require only the identities and inequalities displayed in the paper. No additional computational input is needed.

\begin{lemma}[Positivity of the selected coefficient rows]\label{lem:arithmetic}
For the four rows of Table~\ref{tab:coefficients}, the coefficients in \eqref{eq:coefficients} satisfy
\[
 \chi,\upsilon,\ell>0,\qquad
 0<\upsilon<\min\{2,n/(n-1)\},\qquad k_n,j_n>0.
\]
The scalar coefficients in \eqref{eq:polynomialCoefficients} have
$c_0,c_4,c_{22},c_{04}>0$, and both discriminants in \eqref{eq:discriminants} are positive. The margins \eqref{eq:margins} satisfy \eqref{eq:marginBounds}.
\end{lemma}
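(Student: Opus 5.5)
The plan is a direct exact rational computation, carried out row by row for $n=3,4,5,6$ from Table~\ref{tab:coefficients}. I would first record the dimension constants $d_n=(n-2)^2/(n(n-1))$, $e_n=(n-2)/n$ and $\delta_n=\tau_n^{-1}$. From each row $(\tau_n,\alpha,\chi,\ell)$ I would then compute, in the order prescribed by \eqref{eq:coefficients}, the quantities $\beta=2\chi/(n+2)$, $\epsilon=-(n-2)\alpha$, $\upsilon=1/n+\chi/4$, $\mathfrak D=n-(n-1)\upsilon$, $\zeta$ and $\theta=n\upsilon/(4\mathfrak D)$. All of these are rational. The sign conditions $\chi,\upsilon,\ell>0$ and $0<\upsilon<\min\{2,n/(n-1)\}$ are immediate from the explicit values; for example $\upsilon=1/n+\chi/4\le 1/3+31/160<1\le n/(n-1)$. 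In particular $\mathfrak D>0$. Next I would check $k_n=n-1+n\upsilon>0$ trivially and $j_n=n(2\chi-\beta)=n\chi(2-2/(n+2))>0$.

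Second, I would substitute into \eqref{eq:polynomialCoefficients} to obtain $c_0$, $c_2$, $c_4$, $c_{04}$ and $c_{22}$ as exact rationals. Two of these signs are structural: $c_{04}=n\ell(1-\delta_n)>0$ because $\tau_n>1$, and $c_{22}\ge \ell+(1-\delta_n)j_n-\epsilon^2(1/4+\theta)$, which needs numerics. The coefficient $c_{31}$ contains $\sqrt{d_n}$ and $\sqrt{(n-1)/n}$, so it is irrational in general. To keep the verification exact, I would never need $c_{31}$ itself, only $c_{31}^2$ in $\Delta_n^{(bh)}=4c_4c_{22}-c_{31}^2$. I would therefore bound $c_{31}$ above by a rational upper bound $\bar c_{31}$, using rational upper bounds for the square roots (for instance via $\sqrt{x}\le p/q$ whenever $p^2\ge q^2x$). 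This gives $s_n\ge (4c_4c_{22}-\bar c_{31}^2)/(4c_{22})$. The other quantity, $\Delta_n^{(h)}=4c_0c_{04}-c_2^2$, is purely rational. The statement also asks that both cubic coefficients in $c_{31}$ be nonnegative, which follows from $\epsilon<0$ and $\chi>\beta$.

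Third, I would check positivity of $c_0$ and $c_4$ and compute $\xi_n$, $r_n$ and $s_n$ from \eqref{eq:margins}. For each one, I would compare against the table \eqref{eq:marginBounds} by cross-multiplying into an integer inequality. Positivity of the discriminants then follows from $r_n,s_n>0$ together with $c_0,c_{22}>0$.

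The main obstacle is purely bookkeeping. The thresholds are tuned so that the margins are tiny, for instance $r_6>10^{-4}$ and $s_n\approx10^{-4}$. Any sloppy rounding of $\sqrt{d_n}$ or of $\zeta$ would flip the signs, so the irrational square-root bounds must be chosen tight enough (several digits) and applied only in the direction that decreases $s_n$. I would do $n=6$ first as the most delicate row, then repeat the identical pipeline for $n=3,4,5$.
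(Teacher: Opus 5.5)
Your plan is correct and is essentially the paper's proof: you substitute each row exactly into \eqref{eq:coefficients} and \eqref{eq:polynomialCoefficients}, then obtain \eqref{eq:marginBounds} by cross-multiplying positive integers. The one refinement you miss is that $\sqrt{d_n}=(n-2)/\sqrt{n(n-1)}$ and $\sqrt{(n-1)/n}=(n-1)/\sqrt{n(n-1)}$, so $t_{31}=\sqrt{n(n-1)}\,c_{31}$ is rational and $c_{31}^2=t_{31}^2/[n(n-1)]$ is exact; the paper uses this to make $\Delta_n^{(bh)}$ and $s_n$ exact rationals, so your one-sided square-root bounds (valid, since both summands of $c_{31}$ are nonnegative) and the rounding concern are unnecessary.
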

\begin{proof}
Substitution in \eqref{eq:coefficients} gives the following derived coefficients; all fractions are exact.
\begin{center}
\small
\renewcommand{\arraystretch}{1.65}
\begin{tabular}{ccccc}
\toprule
$n$&$\upsilon$&$\mathfrak D$&$\zeta$&$\theta$\\\midrule
3&$\dfrac{28}{75}$&$\dfrac{169}{75}$&$- \dfrac{2873}{4275}$&$\dfrac{21}{169}$\\[5pt]
4&$\dfrac{343}{1000}$&$\dfrac{2971}{1000}$&$- \dfrac{38623}{75000}$&$\dfrac{343}{2971}$\\[5pt]
5&$\dfrac{83}{240}$&$\dfrac{217}{60}$&$- \dfrac{217}{500}$&$\dfrac{415}{3472}$\\[5pt]
6&$\dfrac{173}{480}$&$\dfrac{403}{96}$&$- \dfrac{403}{1056}$&$\dfrac{519}{4030}$\\[5pt]
\bottomrule
\end{tabular}
\end{center}
The other coefficients are given by $\beta=2\chi/(n+2)$, $\epsilon=-(n-2)\alpha$, $k_n=n-1+n\upsilon$, and $j_n=n(2\chi-\beta)$. Their signs and the Hessian admissibility inequalities follow at once from these rows.

The six polynomial coefficients are listed together below. To keep every entry rational, write
$t_{31}=\sqrt{n(n-1)}\,c_{31}$, so $c_{31}^2=t_{31}^2/[n(n-1)]$.
\begin{center}
\small
\renewcommand{\arraystretch}{1.65}
\setlength{\tabcolsep}{5pt}
\begin{tabular}{ccccccc}
\toprule
$n$&$c_0$&$c_2$&$c_4$&$t_{31}$&$c_{22}$&$c_{04}$\\\midrule
3&$\dfrac{74674}{243675}$&$\dfrac{1021}{18825}$&$\dfrac{11422729}{231361000}$&$\dfrac{26811117}{57840250}$&$\dfrac{15849869147}{87107416500}$&$\dfrac{4}{1255}$\\[5pt]
4&$\dfrac{56449}{312500}$&$\dfrac{100841}{207000}$&$\dfrac{82787123}{982658250}$&$\dfrac{481182328}{163776375}$&$\dfrac{1078246301}{502247550}$&$\dfrac{1778}{5175}$\\[5pt]
5&$\dfrac{66877}{600000}$&$\dfrac{1531}{870}$&$\dfrac{127537}{1251222}$&$\dfrac{7373425}{834148}$&$\dfrac{290129923}{30237865}$&$\dfrac{204}{29}$\\[5pt]
6&$\dfrac{161}{2420}$&$\dfrac{213}{50}$&$\dfrac{14931791}{141017760}$&$\dfrac{112942573}{5875740}$&$\dfrac{1710961237}{58757400}$&$\dfrac{342}{5}$\\[5pt]
\bottomrule
\end{tabular}
\end{center}
All denominators are positive. Direct multiplication gives the two discriminants:
\begin{center}
\small
\renewcommand{\arraystretch}{1.65}
\begin{tabular}{ccc}
\toprule
$n$&$\Delta_n^{(h)}$&$\Delta_n^{(bh)}$\\\midrule
3&$\dfrac{370490317}{383794216875}$&$\dfrac{335474248}{2722106765625}$\\[5pt]
4&$\dfrac{58539623731}{5356125000000}$&$\dfrac{52474249003}{12713141109375}$\\[5pt]
5&$\dfrac{934531}{23653125}$&$\dfrac{4336967}{829381440}$\\[5pt]
6&$\dfrac{16551}{302500}$&$\dfrac{40466969}{2350296000}$\\[5pt]
\bottomrule
\end{tabular}
\end{center}
Every numerator in this table is strictly positive. Substitution into \eqref{eq:margins} gives the exact margins below; hence the lower bounds in \eqref{eq:marginBounds} follow by cross-multiplication of positive integers.
\begin{center}
\small
\renewcommand{\arraystretch}{1.85}
\setlength{\tabcolsep}{5pt}
\begin{tabular}{cccc}
\toprule
$n$&$r_n$&$s_n$&$\xi_n$\\\midrule
3&$\dfrac{27665993931658}{640479639917025}$&$\dfrac{335474248}{1981233643375}$&$\dfrac{370490317}{940907334800}$\\[5pt]
4&$\dfrac{3304503219991219}{812739513744062500}$&$\dfrac{4770386273}{9924767088750}$&$\dfrac{58539623731}{7740106243200}$\\[5pt]
5&$\dfrac{62498629687}{88459256100000}$&$\dfrac{30358769}{222819780864}$&$\dfrac{7476248}{168730671}$\\[5pt]
6&$\dfrac{296079}{2956661620}$&$\dfrac{40466969}{273753797920}$&$\dfrac{16551}{161000}$\\[5pt]
\bottomrule
\end{tabular}
\end{center}
This proves all assertions.
\end{proof}

\subsection*{The upper-dimensional row in detail}
For $n=6$, the selected hyperbolic endpoint is $H^2=5/3$, so $\delta_6=3/5$.
The free coefficients $(\alpha,\chi,\ell)=(55/81,31/40,57/2)$ give
\[
 \beta=\frac{31}{160},\quad \epsilon=-\frac{220}{81},\quad
 \upsilon=\frac{173}{480},\quad \mathfrak D=\frac{403}{96},\quad
 \zeta=-\frac{403}{1056},\quad \theta=\frac{519}{4030}.
\]
In particular
\[
 k_6=\frac{573}{80},\qquad j_6=\frac{651}{80}.
\]
For example,
\[
 c_0=\frac15-\frac{173}{1920}
       -\frac{(403/1056)^2\,5}{4(403/96)}
     =\frac{161}{2420},\qquad
 c_2=\frac{57}{8}-\frac25\frac{573}{80}=\frac{213}{50}.
\]
The discriminants are
\[
 \Delta_6^{(h)}=4\frac{161}{2420}\frac{342}{5}
                        -\left(\frac{213}{50}\right)^2
                 =\frac{16551}{302500},
 \qquad
 \Delta_6^{(bh)}=\frac{40466969}{2350296000}.
\]
Consequently
\[
 \xi_6=\frac{16551}{161000},\qquad
 r_6=\frac{296079}{2956661620},\qquad
 s_6=\frac{40466969}{273753797920}.
\]
The smallest scalar margin exceeds the required bound because
\[
 r_6-\frac1{10000}
 =\frac{1032095}{7391654050000}>0.
\]
The two identities \eqref{eq:squares} then give the complete positive decomposition; there is no remaining numerical sign decision.

\section{A direct proof of the zero-entropy criterion}\label{app:entropy}
For completeness, we prove the known consequence stated in Proposition~\ref{thm:alln}. This argument is independent of the low-dimensional coercive combinations.

\begin{proof}[Proof of Proposition~\ref{thm:alln}]
Finite index gives stability outside a compact set $K$ by the disjoint-support argument of Lemma~\ref{lem:exterior}, which is independent of dimension. Suppose $H^2+c>0$ and set $\lambda=n(H^2+c)$. Since $|A|^2\ge nH^2$,
\begin{equation}\label{eq:allnexterior}
 \lambda\int f^2\,\mathrm dV_g\le\int|\nabla f|^2\,\mathrm dV_g,
 \qquad f\in\operatorname{Lip}_c(M\setminus K).
\end{equation}
The dimension-independent volume argument of Lemma~\ref{lem:volume} gives
\[
 \Vol_g B_r(x)\ge\omega_n r^n
       e^{-n\sqrt{H^2+\max\{c,0\}}\,r}.
\]
Thus $M$ has infinite volume, a fact not implied by zero entropy alone.

Choose $a_0>0$ sufficiently large and a smooth function $\chi$ with compactly supported gradient such that $\chi=0$ on a neighborhood of $K$, $\chi=1$ outside $B_{a_0}(o)$, and $0\le\chi\le1$. Put $C_\chi=\int|\nabla\chi|^2\,\mathrm dV_g$. Fix $L>0$. For $R>a_0$, let $\eta_{R,L}$ equal one on $B_R(o)$, decrease linearly to zero on $B_{R+L}(o)\setminus B_R(o)$, and vanish outside $B_{R+L}(o)$. The supports of $\nabla\chi$ and $\nabla\eta_{R,L}$ are disjoint. Apply \eqref{eq:allnexterior} to $f=\chi\eta_{R,L}$ to obtain, with $V(R)=\Vol_g B_R(o)$,
\[
 \lambda\bigl(V(R)-V(a_0)\bigr)
 \le C_\chi+L^{-2}\bigl(V(R+L)-V(R)\bigr).
\]
Since $V(R)\to\infty$, for all sufficiently large $R$ this implies
\[
 V(R+L)\ge\left(1+\frac{\lambda L^2}{2}\right)V(R).
\]
Iteration along $R+jL$ yields
\[
 \mathfrak h(M)\ge L^{-1}\log\left(1+\frac{\lambda L^2}{2}\right)>0,
\]
a contradiction. This proves \eqref{eq:allnconclusion}. The comparison between the strong and constrained indices is unchanged in dimension $n$, proving the last assertion.
\end{proof}

\section*{Acknowledgment of AI assistance}
The author used ChatGPT in this work. This assistance included proposing and
refining candidate proofs, performing computational checks, and
reviewing arguments through separate AI agents. The author guided the research direction, checked the mathematical statements and proofs, and carried out the final revisions. The author takes full responsibility for the content of the paper.

\end{document}